\newlength{\defbaselineskip}
\newcommand{\setlinespacing}[1]%
           {\setlength{\baselineskip}{#1 \defbaselineskip}}
\newtheorem*{assumption}{Standing Assumption}
\newcommand{\bR}{\mathbb{R}}
\newcommand{\bZ}{\mathbb{Z}}
\makeatletter\@addtoreset{equation}{section} \makeatother
\newtheorem{theorem}{Theorem}[section]
\newtheorem{lemma}[theorem]{Lemma}
\theoremstyle{definition}
\newtheorem{definition}[theorem]{Definition}
\newtheorem{proposition}[theorem]{Proposition}
\newtheorem{corollary}[theorem]{Corollary}
\theoremstyle{remark}
\newtheorem{remark}[theorem]{Remark}
\numberwithin{equation}{section}
\begin{document}

\title{Maximum Principle for Quasi-linear Reflected Backward SPDEs}

\author{ Guanxing Fu$^1$ \and Ulrich Horst$^1$ \and Jinniao Qiu$^2$}


\footnotetext[1]{Department of Mathematics, Humboldt-Universit\"{a}t zu Berlin, Unter den Linden 6, 10099 Berlin, Germany.
 Financial support from the Berlin Mathematical School and the SFB 649 ``Economic Risk'' is gratefully acknowledged.
  \textit{E-mail}: fuguanxing725@gmail.com (Guanxing Fu); horst@math.hu-berlin.de (Ulrich Horst).}

  \footnotetext[2]{  Department of Mathematics, University of Michigan, East Hall, 530 Church Street, Ann Arbor, MI 48109-1043, USA.
  \textit{Email}: qiujinn@gmail.com (Jinniao Qiu).}





\maketitle

\begin{abstract}
This paper establishes a maximum principle for quasi-linear reflected backward stochastic partial differential equations (RBSPDEs for short). We prove the existence and uniqueness of the weak solution to RBSPDEs allowing for non-zero Dirichlet boundary conditions and, using a stochastic version of De Giorgi's iteration, establish the maximum principle for RBSPDEs on a general domain. The maximum principle for RBSPDEs on a bounded domain and the maximum principle for backward stochastic partial differential equations (BSPDEs for short) on a general domain can be obtained as byproducts. Finally, the local behavior of the weak solutions is considered.
\end{abstract}

{\bf AMS Subject Classification:} 60H15;  35R60

{\bf Keywords:} reflected backward stochastic partial differential equation, backward stochastic partial differential equation, maximum principle, De Giorgi's iteration

\section{Introduction}
Let $(\Omega,\mathcal{F},\mathbb{P})$ be a complete probability space carrying a standard $m$-dimensional Brownian motion $W=\{W_t,t\geq 0\}$. Let $(\mathcal{F}_t)_{t \geq 0}$ be the natural filtration generated by $W,$ augmented by the $\mathbb{P}$-null sets in $\mathcal{F}$. In this paper, we establish a maximum principle for weak solutions to the reflected backward stochastic partial differential equation (RBSPDE)

\begin{equation}\label{equation1.1}
  \left\{\begin{array}{l}
  \begin{aligned}
-du(t,x)&=[\partial _j(a^{ij}\partial _iu(t,x)+\sigma ^{jr}v^r(t,x))
+f(t,x,u(t,x),\nabla u(t,x),v(t,x))\\
&\quad+\nabla \cdot g(t,x,u(t,x),\nabla u(t,x),v(t,x))]\,dt+\mu (dt,x)-v^r(t,x)dW^r_t,\\
&\quad~~~(t,x)\in Q:=[0,T]\times \mathcal{O},\\
u(T,x)&=G(x),~~~x\in \mathcal{O},\\
u(t,x)&\geq \xi (t,x) ~dt\times dx\times d\mathbb{P}-a.e.,\\
\int_Q(u(t,x)&-\xi (t,x))\mu (dt,dx)=0,
    \end{aligned}
  \end{array}\right.
\end{equation}
with general Dirichlet boundary conditions. Here and in what follows, the usual summation convention is applied, $\xi$ is a given stochastic process defined on $(\Omega,\mathcal{F},(\mathcal{F}_t)_{t\geq 0},\mathbb{P})$, called the {\sl obstacle process}, $T \in (0,\infty)$ is a deterministic {\sl terminal time}, $\mathcal{O}\subset \mathbb{R}^n$ is a possibly unbounded domain,  $\partial_ju=\frac{\partial u}{\partial x_j}$ and $\nabla=(\partial_{1},\cdots,\partial_{d})$ denotes the gradient operator. A {\sl solution} to the RBSPDE is a random triple $(u,v,\mu)$ defined on $\Omega\times[0,T]\times\bR^n$ such that (\ref{equation1.1}) holds in a suitable sense.

Since the introduction by Bensoussan \cite{Bensousan_83} backward stochastic partial differential equations (BSPDEs) have been extensively investigated in the probability and stochastic control literature. They naturally arise in many applications, for instance as stochastic Hamilton-Jacobi-Bellman equations associated with non-Markovian control problems \cite{peng}, as adjoint equations of the Duncan-Mortensen-Zakai equaltion in nonlinear filtering \cite{xhou} and as adjoint equations in stochastic control problems when formulating stochastic maximum principles \cite{Bensousan_83}. BSPDEs with singular terminal conditions arise in non-Markovian models for financial mathematics to describe optimal trading in illiquid financial markets \cite{paulwinqiuhorst}.

Reflected BSPDEs arise as the Hamilton-Jacobi-Bellman equation for the optimal stopping problem of stochastic differential equations with random coefficients \cite{changpangyong,qiuwei}, and as the adjoint equations for the maximum principle of Pontryagin type in singular control problems of stochastic partial differential equations in, e.g. \cite{oksendalsulemzhang}

Existence and uniqueness of solutions results for reflected PDEs and SPDEs have been established by many authors.
Pierre \cite{pierre,pierre1980} has studied parabolic PDEs with obstacles using parabolic potentials. Using methods and techniques from parabolic potential theory Denis, Matoussi and Zhang \cite{denismzhang} proved existence and uniqueness of solutions results for quasi-linear SPDEs driven by infinite dimensional Brownian motion. More recently, Qiu and Wei \cite{qiuwei} established a general theory of existence and uniqueness of solution for a class of quasi-linear RBSPDEs, which includes the classical results on obstacle problems for deterministic parabolic PDEs as special cases.

Adapting Moser's iteration scheme to the nonlinear case Aronson and Serrin \cite{AronsonSerrin1967} proved the maximum principle and local bounds of weak solutions for deterministic quasi-linear parabolic equations on bounded domains. Their method was extended by Denis, Matoussi, and Stoica \cite{DenisMatoussiStoica2005} to the stochastic case, obtaining an $L^p$ a priori estimate for the uniform norm of the solution of the stochastic quasi-linear parabolic equation with null Dirichlet condition, and further adapted by Denis, Matoussi, and Stoica \cite{denis-matoussi-stoica-2009} to local solutions. Later, Denis, Matoussi, and Zhang \cite{denismatoussizhang} established $L^p$ estimates for the uniform norm in time and space of weak solutions to reflected quasi-linear SPDEs along with a maximum principle for local solutions using a stochastic version of Moser's iteration scheme. Recently, Qiu and Tang \cite{qiutang} used the De Giorgi's iteration scheme, a technique that also works for degenerate parabolic equations, to establish a local and global maximum principle for weak solutions of BSPDEs without reflection. To the best of our knowledge a maximum principle for reflected BSPDEs has not yet been established in the literature.

In this paper we establish a maximum principle for reflected BSPDEs on possibly unbounded domains; a maximum principle and a comparison principle for BSPDEs on general domains, a maximum principle for RBSPDEs on bounded domains and a local maximum principle for RBSPDEs are obtained as well. Due to the obstacle, the maximum principle for RBSPDE is not a direct extension of that for BSPDE in \cite{qiutang}. Our proofs rely on a stochastic version of De Giorgi's iteration scheme that does not depend on the Lebesgue measure of the domain; this extends the scheme in \cite{qiutang} that only applies to bounded domains. Our iteration scheme requires an almost sure representation of the $L^2$ norm of the positive part of the of the weak solution of RBSPDEs. This, in turn requires generalizing the It\^o's formula for weak solutions to BSPDEs established in \cite{qiutang} and \cite{qiuwei} to the positive part of weak solutions.

It is worth pointing out that by contrast to $L^p$ estimates ($p\in (2,\infty)$) for the time and space maximal norm of weak solutions to \textit{forward} SPDEs or related obstacle problems as established in \cite{DenisMatoussiStoica2005,denismatoussizhang,qiu-2015-DSPDE}, our estimate for weak  solutions is uniform with respect to $w\in \Omega$ and hence establishes an $L^{\infty}$ estimate. This distinction comes from the essential difference between BSPDEs and \textit{forward} SPDEs: the noise term in the former endogenously originates from martingale representation and is hence governed by the coefficients, while the latter is fully exogenous, which prevents any $L^\infty$ estimate  for \textit{forward} SPDEs.

The paper is organized as follows: in Section 2, we list some notations and the standing assumptions on the parameters of the RBSPDE \eqref{equation1.1}. The existence and uniqueness of weak solution to the RBSPDE \eqref{equation1.1} is presented in Section 3. In Sections 4, we establish the maximum principle for the RBSPDE (\ref{equation1.1}) on a general domain as well as the maximum principles for RBSPDEs on a bounded domain and BSPDEs on a general domain. The local behavior of the weak solutions to (\ref{equation1.1}) is also considered. Finally, we list in the appendix some useful lemmas, the frequently used It\^o formulas and some definitions related to the stochastic regular measure.

\section{Preliminaries and standing assumptions}

%

 For an arbitrary domain $\Pi$ in some Euclidean space, let $\mathcal{C}^\infty_0(\Pi)$ be the class of infinitely differentiable functions with compact support in $\Pi$, and $L^2(\Pi)$ be the usual square integrable space on $\Pi$ with the scalar product $\langle u,v\rangle_{\Pi}=\int_{\Pi}u(x)v(x)dx$ and the norm $\|u\|_{L^2(\Pi)}=\langle u,u\rangle^{\frac{1}{2}}_{\Pi}$ for each pair $u,v\in L^2(\Pi)$. For $(k,p)\in\bZ\times [1,\infty)$ where $\bZ$ is the set of all the integers, let $H^{k,p}(\Pi)$ be the usual $k$-th order Sobolev space. For convenience, when $\Pi=\mathcal{O}$, we write $\langle\cdot,\cdot\rangle$ and $\|\cdot\|$ for $\langle \cdot,\cdot\rangle_{\mathcal{O}}$ and $\|\cdot\|_{L^2(\mathcal{O})}$ respectively. We recall that $Q = [0,T] \times {\cal O}$.

For $t\in[0,T]$ and $\Pi\subseteq\mathbb{R}^n$, we put $\Pi _t :=[t,T]\times \Pi$. Denote by $H^{k,p}_{\mathcal{F}}(\Pi _t)$ the class of $H^{k,p}(\Pi)$-valued predictable processes on $[t,T]$ such that for each $u\in H^{k,p}_{\mathcal{F}}(\Pi _t)$ we have that
\begin{equation*}
\|u\|_{H^{k,p}_{\mathcal{F}}(\Pi _t)}:=\left(E\left[\int_t^T\|u(s,\cdot)\|^p_{H^{k,p}(\Pi)}ds\right]\right)^{1/p}<\infty.
\end{equation*}
Let $\mathcal{M}^{k,p}(\Pi _t)$ be the subspace of $H^{k,p}_{\mathcal{F}}(\Pi _t)$ such that
\begin{equation*}
\|u\|_{k,p;\Pi _t}:=\left(\textrm{esssup}_{\omega\in \Omega}\sup_{s\in[t,T]}E\left[\int_s^T\|u(\omega ,\tau , \cdot)\|^p_{H^{k,p}_{\mathcal{F}}(\Pi)}d\tau | \mathcal{F}_s\right]\right)^{1/p}<\infty
\end{equation*}
and  $\mathcal{L}^{\infty}(\Pi _t)$ be the subspace of $H^{0,p}_{\mathcal{F}}(\Pi _t)$ such that
\begin{equation*}
\|u\|_{\infty;\Pi _t}:=\textrm{esssup}_{(\omega ,s,x)\in \Omega \times \Pi _t}|u(\omega ,s,x)|<\infty.
\end{equation*}
Denote by $\mathcal{L}^{\infty,p}(\Pi _t)$ the subspace of $H^{0,p}_{\mathcal{F}}(\Pi _t)$ such that
\begin{equation*}
\|u\|_{\infty,p;\Pi _t}:=\textrm{esssup}_{(\omega , s)\in \Omega \times [t,T]}\|u(\omega,s,\cdot)\|_{L_p(\Pi)}<\infty.
\end{equation*}
Let $\mathcal{V}_2(\Pi _t)$ be the class of all $u\in H^{1,2}_{\mathcal{F}}(\Pi_t)$ such that
\begin{equation*}
\|u\|_{\mathcal{V}_2(\Pi _t)}:=\left(\|u\|_{\infty,2;\Pi_t}^2+\|\nabla u\|_{0,2;\Pi _t}^2\right)^{1/2}<\infty
\end{equation*}
and let $\mathcal{V}_{2,0}(\Pi _t)$ be the subspace of $\mathcal{V}_2(\Pi _t)$ for which
\begin{equation*}
\lim\limits_{r\rightarrow 0}||u(s+r,\cdot)-u(s,\cdot)||_{L^2(\Pi)}=0 ~~~\textrm{for}~\textrm{all}~s,s+r\in[t,T],\quad \text{a.s.}
\end{equation*}



\begin{assumption}
We assume throughout that the coefficients and the obstacle process of the RBSPDE (1.1) satisfy the following conditions. Denote by $\mathbb{F}$ the $\sigma$-algebra generated by all predictable sets on $\Omega\times[0,T]$ associated with $(\mathcal{F}_t)_{t\geq 0}$.

\begin{itemize}
	\item [($\mathcal{A}_1$)] The random functions
\[
	g(\cdot,\cdot,\cdot,X,Y,Z):\Omega \times [0,T]\times \mathcal{O}\rightarrow \mathbb{R}^n \quad  \text{and}\quad f(\cdot,\cdot,	\cdot,X,Y,Z):\Omega \times [0,T]\times \mathcal{O}\rightarrow \mathbb{R}
\]
are $\mathbb{F}\otimes \mathcal{B}(\mathcal{O})$-measurable for any $(X,Y,Z)\in \mathbb{R}\times\mathbb{R}^n\times\mathbb{R}^m$ and there exist positive constants $L$, $\kappa$ and $\beta$ such that for each $(X_i,Y_i,Z_i)\in \mathbb{R}\times\mathbb{R}^n\times\mathbb{R}^m$, $i=1,2$,
$$|g(\cdot,\cdot,\cdot,X_1,Y_1,Z_1)-g(\cdot,\cdot,\cdot,X_2,Y_2,Z_2)|\leq L|X_1-X_2|+\frac{\kappa}{2}|Y_1-Y_2|+\sqrt{\beta}|Z_1-Z_2|$$
and
$$|f(\cdot,\cdot,\cdot,X_1,Y_1,Z_1)-f(\cdot,\cdot,\cdot,X_2,Y_2,Z_2)|\leq L(|X_1-X_2|+|Y_1-Y_2|+|Z_1-Z_2|).$$

	\item[($\mathcal{A}_2$)] The coefficients $a$ and $\sigma$ are $\mathbb{F}\otimes \mathcal{B}(\mathcal{O})$-measurable and there exist positive constants $\varrho >1$, $\lambda$ and $\Lambda$ such that for each $\eta \in \mathbb{R}^n$ and $(\omega,t,x)\in\Omega\times[0,T]\times\mathcal{O}$,
\begin{align*}
	\lambda |\eta|^2\leq (2a^{ij}(\omega ,t,x)-\varrho \sigma ^{ir}\sigma ^{jr}(\omega,t,x))\eta ^i\eta ^j &\leq \Lambda |\eta|^2\\
	|a(\omega ,t,x)|+|\sigma (\omega ,t,x)| &\leq \Lambda,
\end{align*}
and
$$\lambda -\kappa -\varrho '\beta >0 ~with~\varrho ':=\frac{\varrho}{\varrho -1}.$$

	\item[($\mathcal{A}_3$)]
The terminal value satisfies $G\in L^{\infty}(\Omega , \mathcal{F}_T, L^2(\mathcal{O}))\cap L^{\infty}(\Omega,\mathcal{O})$ and for some $p>\max\{n+2,2+4/n\}$, one has
\begin{align*}
	g_0 & :=g(\cdot,\cdot,\cdot,0,0,0)\in \mathcal{M}^{0,p}(Q)\cap \mathcal{M}^{0,2}(Q)\\
	f_0 & :=f(\cdot,\cdot,\cdot,0,0,0)\in \mathcal{M}^{0,\frac{p(n+2)}{p+n+2}}(Q)\cap\mathcal{M}^{0,2}(Q).
\end{align*}

	\item[($\mathcal{A}_4$)]	The obstacle process $\xi$ is almost surely quasi-continuous (see Appendix for the definition) on $Q$ and there exists a process $\hat{\xi}$ such that $\xi \leq \hat{\xi}$ $ds\times dx\times d\mathbb{P}$-a.e., where $\hat{\xi}\in \mathcal{V}_{2,0}(Q)$ together with some $\hat{v}\in\mathcal{M}^{0,2}(Q)$ is a solution to BSPDE
\begin{equation}\label{xi-hat}
\left\{\begin{array}{ll}
\begin{split}
-d\hat{\xi}(t,x)&=[\partial _j(a^{ij}\partial _i\hat{\xi}(t,x)+\sigma ^{jr}\hat{v}^r(t,x))
+\hat{f}(t,x)+\nabla \cdot \hat{g}(t,x)]dt
-\hat{v}^r(t,x)dW^r_t,\\
&~~~\quad (t,x)\in Q,\\
\hat{\xi}(T,x)&=\hat{G}(x),~~~x\in \mathcal{O},
\end{split}
\end{array}
\right.
\end{equation}
with the random functions $\hat{f}$, $\hat {g}$ and $\hat{G}$ satisfying
\begin{align*}
	 &\hat{G} \in L^{\infty}(\Omega , \mathcal{F}_T, L^2(\mathcal{O}))\cap L^{\infty}(\Omega,\mathcal{O}),
	 &\hat{f} \in \mathcal{M}^{0,\frac{p(n+2)}{p+n+2}}(Q)\cap\mathcal{M}^{0,2}(Q),
	 &~~\hat{g} \in \mathcal{M}^{0,p}(Q)\cap\mathcal{M}^{0,2}(Q).
\end{align*}

\item[($\mathcal{A}_5$)] The function $x \mapsto g(\cdot,\cdot,\cdot,x,0,0)$ is uniformly Lipschitz continuous in norm:
\begin{align*}
	\|g(\cdot,\cdot,\cdot,X_1,0,0)-g(\cdot,\cdot,\cdot,X_2,0,0)\|_{0,p;Q}\leq L|X_1-X_2|;\\
    \|g(\cdot,\cdot,\cdot,X_1,0,0)-g(\cdot,\cdot,\cdot,X_2,0,0)\|_{0,2;Q}\leq L|X_1-X_2|.
\end{align*}
\end{itemize}
\end{assumption}

\begin{remark}

While the assumptions $(\mathcal{A}_1-\mathcal{A}_4)$ are standard for the existence and uniqueness of solution, the assumption $\mathcal{A}_5$ is required for the iteration scheme for proof of the maximum principle in Theorem \ref{Theorem_MP_GD} below, which follows easily from $(\mathcal{A}_1)$ when the domain is bounded.

\end{remark}

For the index $p$ specified in $(\mathcal{A}_3)$ and $t\in[0,T]$, define the functional $A_p$ and $B_2$ as follows:
\begin{equation*}
A_p(l,h;\mathcal{O}_t):=\|l\|_{0,\frac{p(n+2)}{p+n+2};\mathcal{O}_t}+\|h\|_{0,p;\mathcal{O}_t}, ~~(l,h)\in \mathcal{M}^{0,\frac{p(n+2)}{p+n+2}}(\mathcal{O}_t)\times \mathcal{M}^{0,p}(\mathcal{O}_t)
\end{equation*}
and
\begin{equation*}
B_2(l,h;\mathcal{O}_t):=\|l\|_{0,2;\mathcal{O}_t}+\|h\|_{0,2;\mathcal{O}_t}, ~~(l,h)\in \mathcal{M}^{0,2}(\mathcal{O}_t)\times \mathcal{M}^{0,2}(\mathcal{O}_t).
\end{equation*}

In Sections 3 and 4, we will repeatedly use the Young inequality of the form
\begin{equation}\label{repeatedestimate}
\langle f,g\rangle=\langle \sqrt{\epsilon}f,\frac{1}{\sqrt{\epsilon}}g\rangle \leq \frac{1}{2}\left[\epsilon \|f\|^2+\frac{1}{\epsilon}\|g\|^2\right].
\end{equation}


\section{Existence and uniqueness of weak solution to RBSPDE \eqref{equation1.1}}
In this section we prove an existence and uniqueness of weak solutions result for the RBSPDE \eqref{equation1.1} along with a strong norm estimate. The difficulty in defining weak solutions to the RBSPDE \eqref{equation1.1} is the random measure $\mu$. It is typically a local time so the Skorokhod condition $\int_Q(u-\xi)\,\mu(dt,dx)=0$ might not make sense. To give a rigorous meaning to the integral condition, the theory of parabolic potential and capacity introduced by \cite{pierre,pierre1980} was generalized by \cite{qiuwei} to a backward stochastic framework. We recall the definition of quasi continuity and stochastic regular measures in Appendix \ref{appendix-measure}.

\begin{definition}\label{definition3.1}
The triple $(u,v,\mu)$ is called a weak solution to the RBSPDE \eqref{equation1.1} if:
\begin{itemize}
	\item[(1)] $(u,v)\in \mathcal{V}_{2,0}(Q)\times \mathcal{M}^{0,2}(Q)$ and $\mu$ is a stochastic regular measure;
	\item[(2)] the RBSPDE \eqref{equation1.1} holds in the weak sense, i.e., for each $\varphi \in \mathcal{C}_0^{\infty}(\mathbb{R}^+)\otimes \mathcal{C}_0^{\infty}(\mathcal{O})$, we have
\begin{align*}
& \langle u(t,\cdot),\varphi (t,\cdot)\rangle \\
= &
\langle G(\cdot),\varphi(T,\cdot)\rangle
 -\int_t^T\left\{\langle u(s,\cdot),\partial _s\varphi (s,\cdot)\rangle+\langle\partial _j\varphi (s,\cdot),a^{ij}(s,\cdot)\partial _iu(s,\cdot) +\sigma ^{jr}v^r(s,\cdot)\rangle\right\}ds \\
&+\int_t^T\left[\langle f(s,\cdot,u(s,\cdot),\nabla u(s,\cdot),v(s,\cdot)),\varphi (s,\cdot)\rangle
-\langle g^j(s,\cdot,u(s,\cdot),\nabla u(s,\cdot),v(s,\cdot)),\partial_j \varphi (s,\cdot)\rangle\right]ds\\
&+\int_{[t,T]\times\mathcal{O}}\varphi (s,x)\mu (ds,dx)-\int_t^T\langle\varphi (s,\cdot),v^r(s,\cdot)dW_s^r\rangle,\quad\text{a.s.};
\end{align*}
	\item[(3)] $u$ admits a quasi-continuous version $\tilde{u}$ such that $\tilde{u}\geq \xi~ds\times dx\times d\mathbb{P}~a.e.$ and
\begin{equation}
\int_Q(\tilde{u}(t,x)-\xi (t,x))\mu (dt,dx)=0~~\mathbb{P}\text{-a.s.}
\end{equation}
\end{itemize}
\end{definition}

We denote by $\mathcal{U}(\xi,f,g,G)$ the set of all the weak solutions of the RBSPDE (\ref{equation1.1}) associated with the obstacle process $\xi$, the terminal condition $G$, and the coefficients $f$ and $g$. Further, $\mathcal{U}(-\infty,f,g,G)$ is the set of solutions  when there is no obstacle, i.e., $\mathcal{U}(-\infty,f,g,G)$ is the set of solution pairs $(u,v)$ to the associated BSPDE with terminal condition $G$ and coefficients $f$ and $g$. 

The following theorem guarantees the existence and uniqueness of weak solutions in the sense of Definition \ref{definition3.1}. The arguments for the norm estimate also apply to Lemma \ref{estimate-u-k} below, which is needed for the proof of our maximum principle.

\begin{theorem}\label{theorem3.2}
Suppose that Assumptions $(\mathcal{A}_1)$-$(\mathcal{A}_4)$ hold and that $\hat{\xi}|_{\partial\mathcal{O}}=0$. Then the RBSPDE (1.1) admits a unique solution $(u,v,\mu)$ that satisfies the zero Dirichlet condition $u|_{\partial\mathcal{O}}=0$. Moreover, for each $t\in [0,T]$, one has
\begin{equation}\label{equation3.2}
\begin{split}
\|u\|_{\mathcal{V}_2(\mathcal{O}_t)}+\|v\|_{0,2;\mathcal{O}_t}\leq &
C\left( \textrm{esssup}_{\omega \in \Omega}\|G(\omega,\cdot)\|_{L^2(\mathcal{O})}+\textrm{esssup}_{\omega \in \Omega}\|\hat{G}(\omega,\cdot)\|_{L^2(\mathcal{O})}\right. \\
&\left.+B_2(f_0,g_0;\mathcal{O}_t)+B_2(\hat{f},\hat{g};\mathcal{O}_t)\right),\\
\end{split}
\end{equation}
where the positive constant $C$ only depends on the constants $\lambda$, $\varrho$, $\kappa$, $\beta$, $L$  and $T$.
\end{theorem}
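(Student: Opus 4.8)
The plan is to establish existence and uniqueness by a Picard-type fixed point argument, decoupling the obstacle problem from the nonlinearity, and to derive the a priori estimate \eqref{equation3.2} simultaneously so that it also covers the auxiliary estimates needed later (Lemma \ref{estimate-u-k}). First I would reduce to the case of a linear RBSPDE with a prescribed free term: given $(\bar u,\bar v)\in\mathcal{V}_{2,0}(Q)\times\mathcal{M}^{0,2}(Q)$, freeze the lower-order coefficients by setting $f(t,x):=f(t,x,\bar u,\nabla\bar u,\bar v)$ and $g(t,x):=g(t,x,\bar u,\nabla\bar u,\bar v)$; by $(\mathcal{A}_1)$ and $(\mathcal{A}_3)$ these lie in $\mathcal{M}^{0,2}(Q)$ (and in the relevant mixed-norm spaces). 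For the resulting linear RBSPDE with obstacle $\xi$, terminal datum $G$, and zero Dirichlet condition, I would invoke the existence-and-uniqueness theory for quasi-linear RBSPDEs of Qiu and Wei \cite{qiuwei} — whose framework, as recalled in the introduction, covers exactly \eqref{equation1.1} under $(\mathcal{A}_1)$--$(\mathcal{A}_4)$ — to obtain a unique weak solution triple $(u,v,\mu)$ in the sense of Definition \ref{definition3.1}. The assumption $\hat\xi|_{\partial\mathcal{O}}=0$ together with $\xi\le\hat\xi$ and the comparison property of the penalized approximations forces $u|_{\partial\mathcal{O}}=0$.

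Next I would set up the contraction. Denote by $\Gamma$ the solution map $(\bar u,\bar v)\mapsto(u,v)$. Taking two inputs $(\bar u_1,\bar v_1),(\bar u_2,\bar v_2)$ with outputs $(u_i,v_i,\mu_i)$, the difference $\delta u:=u_1-u_2$, $\delta v:=v_1-v_2$ solves a linear BSPDE \emph{without} obstacle on the region $\{u_1>\xi\}\cap\{u_2>\xi\}$ and, crucially, the Skorokhod conditions make the difference of the measures act with a favorable sign when tested against $(\delta u)$: one has $\int_Q \delta u\,(d\mu_1-d\mu_2)=\int_Q(u_1-\xi)d\mu_1+\int_Q(u_2-\xi)d\mu_2-\int_Q(u_1-\xi)d\mu_2-\int_Q(u_2-\xi)d\mu_1\le 0$ since $u_i\ge\xi$ and $\mu_i\ge0$. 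Applying the It\^o formula for weak solutions of BSPDEs (the one recalled in the appendix, from \cite{qiuwei,qiutang}) to $\|\delta u(t,\cdot)\|^2$, using the parabolicity condition in $(\mathcal{A}_2)$ to absorb the $\langle\nabla\delta u,\,\cdot\rangle$ and $\delta v$ terms — this is where the structural inequality $\lambda-\kappa-\varrho'\beta>0$ is consumed, via the Young inequality \eqref{repeatedestimate} with a carefully chosen $\epsilon$ tied to $\varrho,\varrho'$ — and the Lipschitz bounds from $(\mathcal{A}_1)$ on $f,g$, I would arrive at
\begin{equation*}
\|\delta u\|_{\mathcal{V}_2(\mathcal{O}_t)}^2+\|\delta v\|_{0,2;\mathcal{O}_t}^2\le C\,(T-t)\big(\|\delta\bar u\|_{\mathcal{V}_2(\mathcal{O}_t)}^2+\|\delta\bar v\|_{0,2;\mathcal{O}_t}^2\big),
\end{equation*}
so that $\Gamma$ is a contraction on a short interval $[T-h,T]$; uniqueness on $[0,T]$ and then existence follow by patching finitely many such intervals, the number depending only on $\lambda,\varrho,\kappa,\beta,L,T$.

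For the norm estimate \eqref{equation3.2} I would run the same It\^o-formula computation on the solution itself rather than on a difference. Here the obstacle term is handled by comparison with $\hat\xi$: since $\xi\le\hat\xi$, one has $u\ge\hat\xi\wedge(\text{something})$ is not quite available, so instead I would test the equation for $u-\hat\xi$ — which solves a BSPDE with the measure $\mu$ still present but now $\int_Q(u-\hat\xi)\,d\mu\le\int_Q(u-\xi)\,d\mu=0$ — apply It\^o to $\|(u-\hat\xi)(t,\cdot)\|^2$, drop the nonpositive measure contribution, use $(\mathcal{A}_2)$ to extract the coercive $\|\nabla(u-\hat\xi)\|^2$ term and $(\mathcal{A}_1)$ to bound the $f,g$ contributions by $\epsilon\|\nabla(u-\hat\xi)\|^2$ plus lower-order terms in $\|u-\hat\xi\|^2$, $B_2(f_0,g_0;\mathcal{O}_t)^2$ and the data of $\hat\xi$, then close with a backward Gronwall inequality and the conditional-expectation structure of the $\mathcal{M}^{0,2}$-norm. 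Adding back the a priori bound on $\hat\xi,\hat v$ in terms of $\hat G,\hat f,\hat g$ (itself a standard BSPDE estimate, or simply part of $(\mathcal{A}_4)$) and taking the essential supremum over $\omega$ at the end yields \eqref{equation3.2} with $C=C(\lambda,\varrho,\kappa,\beta,L,T)$. The main obstacle is the rigorous handling of the random measure $\mu$ in the It\^o formula: one must justify that $\int_{[t,T]\times\mathcal{O}}\tilde u\,d\mu$ and $\int_{[t,T]\times\mathcal{O}}\tilde\xi\,d\mu$ are well-defined and satisfy the Skorokhod cancellation at the level of quasi-continuous versions — this is precisely the point where the parabolic-capacity framework of \cite{qiuwei} and the generalized It\^o formula for weak solutions (stated in the appendix) must be invoked with care, and it is the only step that is not a routine energy estimate.
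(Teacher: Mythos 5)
Your argument for the a priori estimate \eqref{equation3.2} is essentially the paper's own: test $u-\hat{\xi}$, observe that the Skorokhod condition together with $\xi\leq\hat{\xi}$ gives $\int_{\mathcal{O}_t}(u-\hat{\xi})\,d\mu\leq\int_{\mathcal{O}_t}(u-\xi)\,d\mu=0$, apply the It\^o formula (Proposition \ref{proposition6.1}) to $\|(u-\hat{\xi})(t)\|^2$, absorb the gradient and martingale terms via $(\mathcal{A}_2)$ and the choice $\theta>\varrho'$ with $\lambda-\kappa-\beta\theta>0$, use the Lipschitz bounds of $(\mathcal{A}_1)$, take conditional expectations and suprema, and close with Gronwall; the paper then bounds $\|\hat{\xi}\|_{\mathcal{V}_2}+\|\hat{v}\|_{0,2}$ by the same It\^o argument applied to $\hat{\xi}$ (this is where $\hat{\xi}|_{\partial\mathcal{O}}=0$ is used — it is not contained in $(\mathcal{A}_4)$, as you seem to suggest, but it is the ``standard BSPDE estimate'' you allude to). Where you diverge is the existence and uniqueness part: the paper does not reprove it at all, but simply cites Theorem 4.12 of Qiu and Wei \cite{qiuwei} for the quasi-linear RBSPDE with zero Dirichlet condition, whereas you propose to rebuild it by freezing the coefficients and running a Picard contraction, with the difference of the reflecting measures handled by the (correct) sign computation $\int_Q\delta u\,(d\mu_1-d\mu_2)\leq 0$. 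That route can be made to work (it is in the spirit of how such results are proved in \cite{qiuwei}), but your claim that the contraction constant is $C(T-t)$ is not accurate: the dependence of $g$ on $\nabla\bar{u}$ and $\bar{v}$ enters with the fixed constants $\kappa/2$ and $\sqrt{\beta}$, and the corresponding terms $\|\nabla\delta\bar{u}\|_{0,2;\mathcal{O}_t}^2$ and $\|\delta\bar{v}\|_{0,2;\mathcal{O}_t}^2$ on the right-hand side carry no small time factor; smallness of the contraction constant for these terms must come from the structural condition $\lambda-\kappa-\varrho'\beta>0$ (only the zero-order $\|\delta\bar{u}\|^2$ contribution is controlled by shrinking the interval or by Gronwall). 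Likewise, the assertion that $u|_{\partial\mathcal{O}}=0$ is ``forced'' by comparison of penalized approximations is hand-waving; in the cited framework the zero Dirichlet condition is built into the solution space, and $\hat{\xi}|_{\partial\mathcal{O}}=0$ serves as the compatibility hypothesis. Since the paper's actual proof consists only of the energy estimate, which you reproduce faithfully, these two points are blemishes in material the paper delegates to \cite{qiuwei} rather than gaps relative to the paper's argument.
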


\begin{proof}
It has been shown in \cite[Theorem 4.12]{qiuwei} that the RBSPDE (1.1) admits a unique solution $(u,v,\mu)$ satisfying the zero Dirichlet condition $u|_{\partial\mathcal{O}}=0$ and that this solutions satisfies the integrability condition
\begin{equation*}
E\left[\sup_{t\in[0.T]}\|u(t)\|^2\right]+E\left[\int_0^T\|\nabla u(t)\|^2\,dt\right]+E\left[\int_0^T\|v(t)\|^2\,dt\right]<\infty.
\end{equation*}
Hence, we only need to prove the estimate (\ref{equation3.2}). To this end, notice first that
\begin{align*}
&\int_t^T\int_{\mathcal{O}}(u(s,x)-\hat{\xi}(s,x))\,\mu(dsdx)\\
=& \int_t^T\int_{\mathcal{O}}(u(s,x)-\xi(s,x)+\xi(s,x)-\hat{\xi}(s,x))\,\mu(dsdx)\\
\leq& 0.
\end{align*}
Thus for each $t\in [0,T]$, Proposition \ref{proposition6.1} yields almost surely,
\begin{equation} \label{estimate_begin}
\begin{split}
&\|u(t)-\hat{\xi}(t)\|^2+\int_t^T\|v(s)-\hat{v}(s)\|^2ds
\\
=&\|G-\hat{G}\|^2-\int_t^T\langle u(s)-\hat{\xi}(s),v^r(s) -\hat{v}^r(s)\rangle\, dW_s^r \\
& - \int_t^T\langle 2\partial _j(u-\hat{\xi}(s)),a^{ij}\partial _i(u-\hat{\xi})(s)  +\sigma ^{jr}(v^r-\hat{v}^r)\rangle\, ds \\
& -\int_t^T\langle 2\partial _j(u-\hat{\xi}(s)),g^j(s,u(s),\nabla u(s),v(s))-\hat{g}^j(s)\rangle\, ds \\
& +\int_t^T\langle 2(u-\hat{\xi}(s)),f(s,u(s),\nabla u(s),v(s))-\hat{f}(s)\rangle\, ds \\
& +\int_{\mathcal{O}_t}2(u(s,x)-\hat{\xi}(s,x))\mu (ds,dx)\\
\leq & \|G-\hat{G}\|^2-\int_t^T\langle u(s)-\hat{\xi}(s),v^r(s)-\hat{v}^r(s)\rangle\, dW_s^r \\
& -\int_t^T\langle 2\partial _j(u-\hat{\xi}(s)),a^{ij}\partial _i(u-\hat{\xi})(s)+\sigma ^{jr}(v^r-\hat{v}^r)\rangle \,ds \\
&-\int_t^T\langle 2\partial _j(u-\hat{\xi}(s)),g^j(s,u(s),\nabla u(s),v(s))-\hat{g}^j(s)\rangle \,ds \\
& +\int_t^T\langle 2(u-\hat{\xi}(s)),f(s,u(s),\nabla u(s),v(s))-\hat{f}(s)\rangle \,ds.
\end{split}
\end{equation}
Applying assumption $(\mathcal{A}_2)$ and (\ref{repeatedestimate}), one has
\begin{align*}
I_1:&
=-E\left[\int_t^T\langle 2\partial _j(u-\hat{\xi}(s)),a^{ij}\partial _i(u-\hat{\xi})(s)+\sigma ^{jr}(v_r-\hat{v}^r)\rangle \,ds\Big|\mathcal{F}_t\right]\\
=& -E\left[\int_t^T\langle\partial _j(u-\hat{\xi}(s)),(2a^{ij}-\sigma^{ir}\sigma^{jr}\varrho)\partial _i(u-\hat{\xi})(s)+\sigma^{ir}\sigma^{jr}\varrho\partial _i(u-\hat{\xi})(s)+2\sigma ^{jr}(v^r-\hat{v}^r)\rangle\, ds|\mathcal{F}_t\right]\\
\leq & -\lambda E\left[\int_t^T\|\nabla (u(s)-\hat{\xi}(s))\|^2\,ds|\mathcal{F}_t\right]+\frac{1}{\varrho}E\left[\int_t^T\|v(s)-\hat{v}(s)\|^2\,ds|\mathcal{F}_t\right].
\end{align*}
By assumptions $(\mathcal{A}_1)$ and $(\mathcal{A}_3)$ and the estimate (\ref{repeatedestimate}) it holds for each $\epsilon>0$ and $\theta>0$ that:
\begin{align*}
I_2:&
= -E\left[\int_t^T\langle 2\partial _j(u-\hat{\xi}(s)),g^j(s,u(s),\nabla u(s),v(s))-\hat{g}^j(s)\rangle\, ds|\mathcal{F}_t\right]\\
   \leq & E\left[\int_t^T\langle 2|\nabla(u(s)-\hat{\xi}(s))|,L|u(s)|+\frac{\kappa}{2}|\nabla u(s)|+\sqrt{\beta}|v(s)|\rangle\, ds|\mathcal{F}_t\right]\\
   &+E\left[\int_t^T\langle 2|\nabla(u(s)-\hat{\xi}(s))|,|\hat{g}(s)|+|g_0(s)|\rangle \,ds|\mathcal{F}_t\right]\\
\leq & 2\epsilon E\left[\int_t^T\|\nabla (u(s)-\hat{\xi}(s))\|^2\,ds|\mathcal{F}_t\right]+C(\epsilon)E\left[\int_t^T\|g_0(s)\|^2\,ds|\mathcal{F}_t\right]+C(\epsilon)E\left[\int_t^T\|\hat{g}(s)\|^2\,ds|\mathcal{F}_t\right]\\
&+E\left[\int_t^T\langle 2|\nabla(u(s)-\hat{\xi}(s))|,L|u(s)-\hat{\xi}(s)|+L|\hat{\xi}(s)|\rangle\, ds|\mathcal{F}_t\right]\\
&+E\left[\int_t^T\langle 2|\nabla(u(s)-\hat{\xi}(s))|,\frac{\kappa}{2}|\nabla(u(s)-\hat{\xi}(s))|+\frac{\kappa}{2}|\nabla\hat{\xi}(s)|\rangle\, ds|\mathcal{F}_t\right]\\
&+E\left[\int_t^T\langle 2|\nabla(u(s)-\hat{\xi}(s))|,\sqrt{\beta}|v(s)-\hat{v}(s)|+\sqrt{\beta}|\hat{v}(s)|\rangle\, ds|\mathcal{F}_t\right]\\
\leq & 2\epsilon E\left[\int_t^T\|\nabla(u(s)-\hat{\xi}(s))\|^2\,ds|\mathcal{F}_t\right]+C(\epsilon)E\left[\int_t^T\|\hat{g}(s)\|\,ds|\mathcal{F}_t\right]+C(\epsilon)E\left[\int_t^T\|g_0(s)\|\,ds|\mathcal{F}_t\right]\\
&+2\epsilon E\left[\int_t^T\|\nabla(u(s)-\hat{\xi}(s))\|^2\, ds|\mathcal{F}_t\right]+C(\epsilon,L)E\left[\int_t^T\|u(s)-\hat{\xi}(s)\|^2\, ds|\mathcal{F}_t\right]+C(\epsilon,L)E\left[\int_t^T\|\hat{\xi}(s)\|^2\, ds|\mathcal{F}_t\right]\\
&+\kappa E\left[\int_t^T\|\nabla(u(s)-\hat{\xi}(s))\|^2\,ds|\mathcal{F}_t\right]+\epsilon E\left[\int_t^T\|\nabla(u(s)-\hat{\xi}(s))\|^2\,ds|\mathcal{F}_t\right]+C(\epsilon,\kappa)E\left[\int_t^T\|\nabla\hat{\xi}(s))\|^2\,ds|\mathcal{F}_t\right]\\
&+\beta\theta E\left[\int_t^T\|\nabla(u(s)-\hat{\xi}(s))\|^2\,ds|\mathcal{F}_t\right]+\frac{1}{\theta}E\left[\int_t^T\|v(s)-\hat{v}(s)\|^2\,ds|\mathcal{F}_t\right]\\
&+\epsilon E\left[\int_t^T\|\nabla(u(s)-\hat{\xi}(s))\|^2\,ds|\mathcal{F}_t\right]+C(\epsilon,\beta)E\left[\int_t^T\|\hat{v}(s)\|^2\,ds|\mathcal{F}_t\right]\\
\leq & \left(6\epsilon +\kappa+\beta\theta\right)E\left[\int_t^T\|\nabla(u-\hat{\xi})(s)\|^2\,ds|\mathcal{F}_t\right]+\frac{1}{\theta}E\left[\int_t^T\|v(s)-\hat{v}(s)\|^2\,ds|\mathcal{F}_t\right]\\   &+C(\epsilon)E\left[\int_t^T\|g_0(s)\|^2\,ds|\mathcal{F}_t\right]+C(\epsilon)E\left[\int_t^T\|\hat{g}(s)\|^2\,ds|\mathcal{F}_t\right]\\
   & + C(\epsilon,L)E\left[\int_t^T\|u(s)-\hat{\xi}(s)\|^2\,ds|\mathcal{F}_t\right]+C(\epsilon ,L)E\left[\int_t^T\|\hat{\xi}(s)\|^2\,ds|\mathcal{F}_t\right]\\
   &+C(\epsilon, \kappa)E\left[\int_t^T\|\nabla\hat{\xi}(s)\|^2\,ds|\mathcal{F}_t\right]+C(\epsilon,\beta)E\left[\int_t^T\|\hat{v}(s)\|^2\,ds|\mathcal{F}_t\right].
\end{align*}

It follows from $(\mathcal{A}_3)$ that:
\begin{align*}
I_3:&=E\left[\int_t^T\langle 2(u(s)-\hat{\xi}(s)),f(s,u(s),\nabla u(s),v(s))-\hat{f}(s)\rangle \,ds|\mathcal{F}_t\right]\\
&\leq  E\left[\int_t^T\langle 2|u(s)-\hat{\xi}(s)|,|f_0|+L|u(s)|+L|\nabla u(s)|+L|v(s)|+|\hat{f}(s)|\rangle \,ds|\mathcal{F}_t\right].
\end{align*}
In view of (\ref{repeatedestimate}) it further holds for each $\epsilon_1>0$ that:
\begin{align*}
& E\left[\int_t^T\langle 2|u(s)-\hat{\xi}(s)|,L|u(s)|+L|\nabla u(s)|+L|v(s)|\rangle \,ds|\mathcal{F}_t\right]\\
\leq & \epsilon_1 E\left[\int_t^T\|\nabla u(s)\|^2\,ds|\mathcal{F}_t\right]+\epsilon_1 E\left[\int_t^T\|v(s)\|^2\,ds|\mathcal{F}_t\right]+C(\epsilon_1,L)E\left[\int_t^T\|u(s)-\hat{\xi}(s)\|^2\,ds|\mathcal{F}_t\right]\\
&+E\left[\int_t^T\langle 2|(u-\hat{\xi})(s)|,L|(u-\hat{\xi})(s)|+L|\hat{\xi}(s)|\rangle \,ds|\mathcal{F}_t\right]\\
\leq&  2\epsilon_1 E\left[\int_t^T\|\nabla (u(s)-\hat{\xi}(s))\|^2\,ds|\mathcal{F}_t\right]+2\epsilon _1 E\left[\int_t^T\|v(s)-\hat{v}(s)\|^2ds|\mathcal{F}_t\right]+2\epsilon _1E\left[\int_t^T\|\hat{v}(s)\|^2\,ds|\mathcal{F}_t\right]\\
&+2\epsilon _1E\left[\int_t^T\|\nabla\hat{\xi}(s)\|^2\,ds|\mathcal{F}_t\right]
+C(\epsilon_1,L)E\left[\int_t^T\|u(s)-\hat{\xi}(s)\|^2\,ds|\mathcal{F}_t\right]+E\left[\int_t^T\|\hat{\xi}(s)\|^2\,ds|\mathcal{F}_t\right],
\end{align*}
and by the H\"older inequality one has that:
\begin{align*}
& E\left[\int_t^T\langle 2|u(s)-\hat{\xi}(s)|,|f_0|+|\hat{f}(s)|\rangle \,ds|\mathcal{F}_t\right]\\
&\leq 2E\left[\int_t^T\|(u-\hat{\xi})(s)\|^2\,ds|\mathcal{F}_t\right]+E\left[\int_t^T\|f_0(s)\|^2\,ds|\mathcal{F}_t\right]+E\left[\int_t^T\|\hat{f}(s)\|^2\,ds|\mathcal{F}_t\right]
\end{align*}
In addition,
$$I_4:=E\left[\|G-\hat{G}\|^2|\mathcal{F}_t\right]\leq  \textrm{esssup}_{\omega \in \Omega}\|G-\hat{G}\|^2.$$

Summing up the estimates $I_1$-$I_4$ and taking the supremum w.r.t. $(\omega,s)\in \Omega\times[t,T]$ on both sides we arrive at:
\begin{align*}
&\|u-\hat{\xi}\|^2_{\infty,2;\mathcal{O}_t}+\|v-\hat{v}\|^2_{0,2;\mathcal{O}_t}+\left(\lambda -\kappa-\beta\theta-6\epsilon-2\epsilon_1\right)\|\nabla(u-\hat{\xi})\|^2_{0,2;\mathcal{O}_t}\\
\leq \,& \left(\frac{1}{\varrho}+\frac{1}{\theta}+2\epsilon_1\right)\|v-\hat{v}\|^2_{0,2;\mathcal{O}_t}
+C(\epsilon,\epsilon_1,L)\int_t^T\|u-\hat{\xi}\|^2_{\infty,2;\mathcal{O}_s}\,ds+C(\epsilon,\epsilon_1,\beta)\|\hat{v}\|^2_{0,2;\mathcal{O}_t}\\
&+C(\epsilon,\epsilon_1,\kappa,L)\|\hat{\xi}\|^2_{\mathcal{V}_2(\mathcal{O}_t)}+\|f_0\|^2_{0,2;\mathcal{O}_t}+\|\hat{f}\|^2_{0,2;\mathcal{O}_t}
+C(\epsilon)\left(\|g_0\|^2_{0,2;\mathcal{O}_t}+\|\hat{g}\|^2_{0,2;\mathcal{O}_t}\right)\\
&+\textrm{esssup}_{\omega\in\Omega}\|G-\hat{G}\|^2.
\end{align*}
By assumption ($\mathcal{A}_2$) we can choose $\theta>\varrho'$ such that $\lambda-\kappa-\beta\theta>0$, and $\theta>\varrho'$ also implies $\frac{1}{\varrho}+\frac{1}{\theta}<1$. Now taking $\epsilon$ and $\epsilon_1$ small enough such that $\lambda -\kappa-\beta\theta-6\epsilon-2\epsilon_1>0$ and $\frac{1}{\varrho}+\frac{1}{\theta}+2\epsilon_1<1$, we have
\begin{equation}\label{equation3.7}
\begin{split}
&\|u-\hat{\xi}\|^2_{\mathcal{V}^2(\mathcal{O}_t)}+\|v-\hat{v}\|^2_{0,2;\mathcal{O}_t}\\
\leq &\, C(\epsilon,\epsilon_1,\lambda,\beta,\kappa,L,\varrho)\left(\int_t^T\|u-\hat{\xi}\|^2_{\infty,2;\mathcal{O}_s}\,ds+\|\hat{v}\|^2_{0,2;\mathcal{O}_t}+\|\hat{\xi}\|^2_{\mathcal{V}_2(\mathcal{O}_t)}\right.\\
&+B_2(f_0,g_0;\mathcal{O}_t)^2+B_2(\hat{f},\hat{g};\mathcal{O}_t)^2+\textrm{esssup}_{\omega\in\Omega}\|G-\hat{G}\|^2_{L^2(\mathcal{O})}\bigg).
\end{split}
\end{equation}
By Gronwall's inequality,
\begin{equation}\label{equation3.8}
\begin{split}
&\|u-\hat{\xi}\|^2_{\mathcal{V}_2(\mathcal{O}_t)}+\|v-\hat{v}\|^2_{0,2;\mathcal{O}_t}\\
\leq &C(\epsilon,\epsilon_1,\lambda,\beta,\kappa,L,\varrho,T)\left(\|\hat{v}\|^2_{0,2;\mathcal{O}_t}
+\|\hat{\xi}\|^2_{\mathcal{V}_2(\mathcal{O}_t)}+\textrm{esssup}_{\omega\in\Omega}\|G-\hat{G}\|^2_{L^2(\mathcal{O}_t)}\right.\\
&\left.+B_2(f_0,g_0;\mathcal{O}_t)^2+B_2(\hat{f},\hat{g};\mathcal{O}_t)^2\right).
\end{split}
\end{equation}
Since $\hat{\xi}|_{\partial\mathcal{O}}=0$, we can apply Proposition \ref{proposition6.1} to $\|\hat{\xi}(t)\|^2$. Starting from (\ref{estimate_begin}), using similar estimates,
 \begin{equation}\label{estimate_xi}
 \|\hat{\xi}\|^2_{\mathcal{V}_2(\mathcal{O}_t)}+\|\hat{v}\|^2_{0,2;\mathcal{O}_t}
\leq  C\left(B_2(\hat{f},\hat{g};\mathcal{O}_t)^2+\textrm{esssup}_{\omega\in\Omega}\|\hat{G}\|^2\right),
 \end{equation}
 where $C$ only depends on $\lambda$, $\beta$, $\kappa$, $\varrho$, $L$ and $T$. The estimate
 (\ref{equation3.8}) together with (\ref{estimate_xi}) yields (\ref{equation3.2}).
\end{proof}


With the same notation as in Theorem \ref{theorem3.2}, we can relax the zero Dirichlet boundary condition in Theorem \ref{theorem3.2} by assuming $u|_{\partial\mathcal{O}}=\tilde{u}|_{\partial\mathcal{O}}$ for some
$(\tilde{u},\tilde{v})\in\mathcal{U}(-\infty,\tilde{f},\tilde{g},\tilde{G})$
where the coefficients $a$, $\sigma$, $\tilde{f}$, $\tilde{g}$ and $\tilde{G}$ satisfy $(\mathcal{A}_2)$ and $(\mathcal{A}_3)$ respectively, and $\tilde{f}$ and $\tilde{g}$ do not depend on $\tilde{u}$, $\nabla \tilde{u}$ and $\tilde{v}$.
Assume further that $\hat{\xi}|_{\partial\mathcal{O}}\leq \tilde{u}|_{\partial{\mathcal{O}}}$ and put $\bar{\xi}:=\hat{\xi}-\tilde{u}$. Then, $(\bar{\xi},\bar{v})\in\mathcal{U}(-\infty,\bar{f},\bar{g},\bar{G})$,
where $\bar{v}=\hat{v}-\tilde{v}$, $\bar{f}=\hat{f}-\tilde{f}$, $\bar{g}=\hat{g}-\tilde{g}$ and $\bar{G}=\hat{G}-\tilde{G}$. Suppose now that $(\breve{\xi},\breve{v})\in\mathcal{U}(-\infty,\bar{f},\bar{g},\bar{G})$ with $\breve{\xi}|_{\partial_{\mathcal{O}}}=0$.
Then, $\breve{\xi}|_{\partial\mathcal{O}}=0\geq (\hat{\xi}-\tilde{u})|_{\partial\mathcal{O}}=\bar{\xi}|_{\partial\mathcal{O}}$ and the maximum principle in Lemma \ref{MP-general-domain-quasilinear-BSPDE} yields $\breve{\xi}\geq \hat{\xi}-\tilde{u}\geq \xi-\tilde{u}$. Therefore, our RBSPDE \eqref{equation1.1} is equivalent to the following one but with zero-Dirichlet condition:
\begin{equation}\label{equivalentequation}
\left\{\begin{array}{ll}
\begin{split}
-d\breve{u}(t,x)&=\left[\partial _j(a^{ij}\partial _i\breve{u}+\sigma ^{jr}\breve{v}^r)(t,x)
+(f+\nabla \cdot g)(t,x,\breve{u}+\tilde u,\nabla (\breve{u} + \tilde u),\breve{v}+\tilde v) \right. \\
&	\left. -(\tilde{f}+\nabla \cdot \tilde{g})(t,x)\right]\,dt +\mu (dt,x)-\breve{v}^r(t,x)dW^r_t,~~~(t,x)\in Q;\\
\breve{u}(T,x)&=G(x)-\tilde{G}(x),~~~x\in \mathcal{O};\\
\breve{u} &\geq \xi-\tilde{u},\quad \mathbb{P}\otimes dt\otimes dx\text{-a.e.};\\
\int_Q(\breve{u}&-(\xi -\tilde{u}))(t,x)\,\mu (dt,dx)=0.
\end{split}
\end{array}
\right.
\end{equation}
By Theorem \ref{theorem3.2}, there is a unique solution $u-\tilde{u}$ to the RBSPDE (\ref{equivalentequation}) satisfying zero-Dirichlet condition. In this way, Theorem \ref{theorem3.2} extends to RBSPDEs with general Dirichlet conditions.


\section{Maximum Principle for RBSPDE}

In this section we state and prove our maximum principles for RBSPDEs. We start with a global maximum principle on general domains, which states that the weak solution $u$ is bounded on the whole domain if it is bounded on the parabolic boundary. Subsequently we analyze the local behavior of $u^{\pm}$ when $u$ is not necessarily bounded on the parabolic boundary.

\subsection{Global Case}

This section establishes a maximum principle for the RBSPDE (\ref{equation1.1}) on a general domain $\mathcal{O}$. Since the Lebesgue measure of $\mathcal{O}$ might not be bounded, the scheme in \cite{qiutang} cannot be applied. Instead, motivated by \cite{qiu-2015-DSPDE}, we use a stochastic De Girogi's scheme that is independent of the measure of the domain. In what follows $\partial_pQ=(\{T\}\times\mathcal{O})\cup([0,T]\times\partial\mathcal{O})$ denotes the parabolic boundary of $Q$.


\begin{theorem}\label{Theorem_MP_GD}
\begin{itemize}
\item[(1)] Assume that $(\mathcal{A}_1)$-$(\mathcal{A}_5)$ hold. If the triplet $(u,v,\mu)$ is a solution to the RBSPDE (\ref{equation1.1}), then
\begin{equation*} 
\begin{split}
&\textrm{esssup}_{(\omega,t,x)\in\Omega \times Q}u^{\pm} \\
\leq &  C\left(\textrm{esssup}_{(\omega,t,x)\in\Omega \times \partial _pQ}u^{\pm}+\textrm{esssup}_{(\omega,t,x)\in\Omega \times \partial _pQ}\hat{\xi}^{\pm}\right. \\
&\quad \left.+A_p(f_0^{\pm},g_0;Q)+B_2(f_0^{\pm},g_0;Q)+A_p(\hat{f}^{\pm},\hat{g};Q)+B_2(\hat{f}^{\pm},\hat{g};Q)
 \right),
\end{split}
\end{equation*}
where the constant $C$ depends only on $\lambda$, $\kappa$, $\beta$, $L$, $\varrho$, $T$, $p$ and $n$.
\item[(2)] If all conditions in (1) hold except assumption $(\mathcal{A}_5)$ is changed to
\begin{align}\label{additional-assumption-f-g}
g(t,x,r,0,0)=g(t,x,0,0)~\textrm{and}~f(t,x,r,0,0)~ \textrm{is}~ \textrm{non-increasing} ~w.r.t.~ \textrm{r},
\end{align}
then
\begin{equation*}
\begin{split}
&\textrm{esssup}_{(\omega,t,x)\in\Omega\times Q} u^{\pm} \\
\leq & \textrm{esssup}_{(\omega,t,x)\in\Omega\times\partial_p Q}u^{\pm}+\textrm{esssup}_{(\omega,t,x)\in\Omega\times\partial_p Q}\hat{\xi}^{\pm} \\
&+C\left(A_p(f_0^{\pm},g_0;Q)^{\frac{np}{np+2(p-n-2)}}B_2(f_0^{\pm},g_0;Q)^{\frac{2(p-n-2)}{np+2(p-n-2)}}\right. \\
&\left.+A_p(\hat{f}^{\pm},\hat{g};Q)^{\frac{np}{np+2(p-n-2)}}B_2(\hat{f}^{\pm},\hat{g};Q)^{\frac{2(p-n-2)}{np+2(p-n-2)}}\right),
\end{split}
\end{equation*}
where $C$ depends on $\lambda$, $\kappa$, $\beta$, $L$, $\varrho$, $n$, $p$ and $T$.
\end{itemize}
\end{theorem}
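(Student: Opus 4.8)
The plan is to run a stochastic De Giorgi iteration on the truncated energy functional, exactly in the spirit of \cite{qiutang,qiu-2015-DSPDE}, but carried out in a way that never invokes the Lebesgue measure of $\mathcal{O}$. By the reduction to zero Dirichlet conditions established after Theorem \ref{theorem3.2} (subtracting $\tilde u$ and replacing the obstacle by $\xi-\tilde u$), and since the claim is symmetric under $u\mapsto -u$, it suffices to bound $\mathrm{esssup}\, u^{+}$ assuming $u\le 0$ on $\partial_p Q$, i.e. $G^{+}=0$ on $\mathcal{O}$ and $u^{+}|_{\partial\mathcal{O}}=0$. Set $w=u-\hat\xi$; because $u\ge\xi$ and $\hat\xi\ge\xi$ we do not have $w\ge 0$, so the correct quantity to truncate is $(u-k)^{+}$ for levels $k\ge k_0:=\mathrm{esssup}_{\Omega\times\partial_pQ}\hat\xi^{+}$, and on the set $\{u>k\}$ we have $u>\hat\xi$, hence $\mu=0$ there by the Skorokhod condition (3.3); this is exactly where the obstacle is killed and why the measure term drops out of the energy inequality. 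The first real step is therefore to establish an It\^o/energy identity for $\|(u-k)^{+}(t)\|^2$. This is the generalization, flagged in the introduction, of the It\^o formula of \cite{qiuwei,qiutang} to the positive part of a weak solution; I would prove it by applying Proposition \ref{proposition6.1} to a smooth convex approximation $\phi_\delta$ of $r\mapsto (r-k)^{+}$, using that $\int_Q \phi_\delta'(u-k)\,\mu(dt,dx)\ge 0$ together with $\int_Q(\tilde u-\xi)\mu=0$ to control the measure term, and passing to the limit $\delta\to 0$ using the $\mathcal V_{2,0}$-regularity and the strong-norm bound (3.2).

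Once that identity is in hand, the second step is the energy inequality on level sets: conditioning on $\mathcal F_t$, taking $\mathrm{esssup}$ over $(\omega,s)$, and estimating the $f$- and $g$-terms via $(\mathcal A_1)$, $(\mathcal A_2)$ and the Young inequality (2.12) exactly as in the proof of Theorem \ref{theorem3.2}, one absorbs the $\|\nabla(u-k)^{+}\|^2$ and $\|v\mathbf 1_{\{u>k\}}\|^2$ terms using $\lambda-\kappa-\varrho'\beta>0$, arriving at
\begin{equation*}
\|(u-k)^{+}\|_{\mathcal V_2(\mathcal O_t)}^2
\le C\!\left(\int_t^T\|(u-k)^{+}\|_{\infty,2;\mathcal O_s}^2\,ds
+ \big\|(f_0^{+}+|\nabla\!\cdot g_0| )\mathbf 1_{A_k}\big\|_{\text{suitable norms}}^2
+ \text{(hatted analogues)}\right),
\end{equation*}
where $A_k=\{(\omega,t,x):u>k\}$; Gronwall then removes the first term. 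Here $(\mathcal A_5)$ (respectively the alternative hypothesis (4.1)) is what lets us write $g(\cdot,u,0,0)-g(\cdot,k,0,0)$ in the $0,p$- and $0,2$-norms with a constant $L|u-k|^{+}$ rather than needing $|\mathcal O|<\infty$, so that the free term genuinely only sees the data $f_0,g_0,\hat f,\hat g$ restricted to $A_k$.

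The third step is the iteration itself. Introduce the two scales
\[
I_k := \|(u-k)^{+}\|_{0,2;Q}^2 \quad(\text{plus its }\mathcal V_2\text{-companion}),
\]
and, using the parabolic Sobolev embedding $\mathcal V_2(\mathcal O_t)\hookrightarrow L^{2(n+2)/n}$ available because $p>\max\{n+2,\,2+4/n\}$, together with H\"older on $A_k$, derive a recursion of the form $\varphi(k')\le C\,(k'-k)^{-\gamma}\,\varphi(k)^{1+\nu}$ for $k_0\le k<k'$, with $\gamma,\nu>0$ depending only on $n,p$; the measure of $A_k$ never appears because everything is phrased through the $\mathrm{esssup}_\Omega$-type norms $\|\cdot\|_{0,p;Q}$ rather than through $L^p(Q)$ directly — this is the crucial point distinguishing the present scheme from \cite{qiutang}. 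Then the standard fast-geometric-convergence lemma (the De Giorgi lemma, presumably recorded in the appendix) gives $\varphi(k_0+d)=0$ for an explicit $d$ bounded by $C$ times the data norms, which is precisely the asserted estimate; in case (2), the extra monotonicity/structure (4.1) lets one optimize the splitting of the free term into its $A_p$- and $B_2$-parts, producing the interpolated exponents $\frac{np}{np+2(p-n-2)}$ and $\frac{2(p-n-2)}{np+2(p-n-2)}$.

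I expect the main obstacle to be the first step — the It\^o formula for $(u-k)^{+}$ of a weak solution in the presence of the regular measure $\mu$. One must justify that the singular measure contributes with a favorable sign after the convex truncation (so that it can simply be discarded), and simultaneously that the limit $\delta\to 0$ in the approximation is legitimate despite $v$ only being in $\mathcal M^{0,2}$ and $\nabla u$ only in $L^2$; controlling the cross term $\langle \phi_\delta'(u-k),\partial_j(\sigma^{jr}v^r)\rangle$ and the quadratic-variation term $\langle \phi_\delta''(u-k)v,v\rangle$ uniformly in $\delta$ is the delicate part. Everything after that is a careful but routine adaptation of the De Giorgi machinery already deployed for BSPDEs without reflection.
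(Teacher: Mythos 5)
Your overall strategy --- truncating $(u-k)^{+}$, discarding the reflection measure at high levels, conditional-expectation energy estimates, and a measure-of-$\mathcal O$-free De Giorgi iteration closed by the fast-convergence lemma --- is indeed the paper's strategy. But there is a genuine gap at the very first step, in the choice of the base level. You set $k_0:=\mathrm{esssup}_{\Omega\times\partial_pQ}\hat\xi^{+}$ and claim that for $k\ge k_0$, on $\{u>k\}$ one has $u>\hat\xi\ge\xi$, so the measure term drops out. That implication is false: $k_0$ dominates $\hat\xi^{+}$ only on the parabolic boundary, while $\mu$ charges the interior of $Q$, where $\hat\xi$ may well exceed $k_0$; so nothing forces $\int(u-k)^{+}\,\mu\le 0$ for your levels and the iteration has no valid starting inequality. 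The levels must dominate the \emph{interior} supremum, as in the paper's choice $\bar k=\mathrm{esssup}_{\Omega\times\partial_p\mathcal O_t}u^{+}+\mathrm{esssup}_{\Omega\times\mathcal O_t}\hat\xi^{+}$. This forces a second ingredient your proposal never supplies: a maximum principle for the obstacle-free BSPDE \eqref{xi-hat} solved by $(\hat\xi,\hat v)$ (Lemma \ref{MP-general-domain-quasilinear-BSPDE}, itself proved by the same De Giorgi scheme), which converts $\mathrm{esssup}_{\Omega\times Q}\hat\xi^{+}$ into $\mathrm{esssup}_{\Omega\times\partial_pQ}\hat\xi^{+}+A_p(\hat f^{+},\hat g;Q)+B_2(\hat f^{+},\hat g;Q)$. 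That is the only route by which the hatted data enter the stated estimate; your suggestion that they appear as ``hatted analogues'' inside the level-set energy inequality cannot work, because you (correctly) truncate $u$ alone rather than $u-\hat\xi$, so $\hat f,\hat g$ never show up there.

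Two further points. First, obtaining the It\^o identity for $\|(u-k)^{+}(t)\|^2$ by applying Proposition \ref{proposition6.1} to a smooth convex approximation of $r\mapsto (r-k)^{+}$ does not go through as stated: Proposition \ref{proposition6.1} requires $u|_{\partial\mathcal O}=0$, and for the relevant levels only $(u-k)^{+}|_{\partial\mathcal O}=0$ is available; this is exactly why the paper proves Theorem \ref{Ito-u+} separately, via the decomposition $u=\hat u-\check u$ with $\check u$ the stochastic regular potential of $\mu$ and approximation of $\mu$ by measures with densities, rather than by mollifying the convex function. Second, the preliminary reduction by subtracting $\tilde u$ is unnecessary and harmful here: it would replace the free terms $f_0,g_0$ by quantities involving $\nabla\tilde u,\tilde v$, which are not controlled by the data norms appearing in the statement; the paper instead simply takes levels above $\mathrm{esssup}_{\Omega\times\partial_pQ}u^{+}$. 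Finally, note that in part (1) the role of $(\mathcal A_5)$ is to bound the shifted free term $\|g_0^{k}\|_{0,p;\mathcal O_t}\le\|g_0\|_{0,p;\mathcal O_t}+Lk$ (a constant shift, not the increment $g(\cdot,u,0,0)-g(\cdot,k,0,0)$, which is handled pointwise by $(\mathcal A_1)$); this produces the $Lk_m$ factor in the recursion and hence the multiplicative constant in front of the boundary suprema, while the $B_2$-terms enter through the separate energy bound for $\|(u-\bar k)^{+}\|_{\mathcal V_2(\mathcal O_t)}$ (Lemma \ref{estimate-u-k}), a step your sketch leaves implicit.
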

\begin{proof}
We only consider the estimate for the positive part $u^+$. The one for the negative part $u^-$ follows analogously. Further, we may w.l.o.g.~assume that $f(t,x,r,0,0)$ is non-increasing in $r$. Otherwise, the desired maximum principle can be derived from the maximum principle for the RBSPDE
\begin{equation*}
\left\{
\begin{array}{l}
\begin{aligned}
\label{u-bar}
-d\bar{u}(t,x)&=\left[\partial_j(a^{ij}(t,x)\partial_i\bar{u}(t,x)+\sigma^{jr}(t,x)\bar{v}^r(t,x))+\bar{f}(t,x,\bar{u}(t,x),\nabla\bar{u}(t,x),\bar{v}(t,x))\right.\\
              &\left.+\nabla\cdot\bar{g}(t,x,\bar{u}(t,x),\nabla\bar{u}(t,x),\bar{v}(t,x))\right]\,dt+\bar{\mu}(dt,x)-\bar{v}^r(t,x)\,dW^r_t,\\
\bar{u}(T,x)&=\bar{G}(x),\\
\bar{u}(t,x)&\geq\bar{\xi}(t,x)~dt\times dx\times d\mathbb{P}-a.e.,\\
\int_Q(\bar{u}(t,x)&-\bar{\xi}(t,x))\,\bar{\mu}(dt,dx)=0,
\end{aligned}
\end{array}
\right.
\end{equation*}
where $\bar{u}(t,x)=e^{Lt}u(t,x)$, $\bar{v}(t,x)=e^{Lt}v(t,x)$, $\bar{\mu}(dt,dx)=e^{Lt}\mu(dt,dx)$, $\bar{G}(x)=e^{LT}G(x)$, $\bar{\xi}(t,x) = e^{Lt}\xi(t,x)$ and
\begin{align*}
	\bar{f}(t,x,\bar{u}(t,x),\nabla\bar{u}(t,x),\bar{v}(t,x)) & =e^{Lt}f(t,x,e^{-Lt}\bar{u}(t,x),e^{-Lt}\nabla\bar{u}(t,x),e^{-Lt}\bar{v}(t,x))-L\bar{u}(t,x)\\
	\bar{g}(t,x,\bar{u}(t,x),\nabla\bar{u}(t,x),\bar{v}(t,x)) & =e^{Lt}g(t,x,e^{-Lt}\bar{u}(t,x),e^{-Lt}\nabla\bar{u}(t,x),e^{-Lt}\bar{v}(t,x)).
\end{align*}


Now, for $t\in[0,T]$ define
\[
	\bar{k}=\textrm{esssup}_{(\omega,t,x)\in\Omega\times\partial_{p}\mathcal{O}_t}u^++\textrm{esssup}_{(\omega,t,x)\in\Omega\times\mathcal{O}_t}\hat{\xi}^+.
\]
For a positive constant $k$ to be determined later and each $m\in \mathbb{N}_0$, let $\bar{k}_m=k(1-2^{-m})$ and $k_m=\bar{k}_m+\bar{k}$.
By Theorem \ref{Ito-u+}, for $m\geq 1$,
\begin{equation}\label{Ito-for-u-k_m}
\begin{split}
&\|(u-k_m)^+(t)\|^2+\int_t^T\|v^{k_m}(s)\|^2ds
\\
=&-2\int_t^T\langle\partial_j(u-k_m)^+(s),a^{ij}\partial_i(u-k_m)^+(s)+\sigma ^{jr}(s)v^{k_m,r}(s)\rangle\, ds\\
&-2\int_t^T\langle\partial_j(u-k_m)^+(s),g^{j,k_m}\left(s,(u-k_m)^+(s),\nabla u(s),v^{k_m}(s)\right)\rangle\, ds\\
&+2\int_t^T\langle (u-k_m)^+(s),f^{k_m}\left(s,(u-k_m)^+(s),\nabla u(s),v^{k_m}(s)\right)\rangle \,ds\\
&+2\int_{\mathcal{O}_t}(u-k_m)^+(s,x)\,\mu(ds,dx)-2\int_t^T\langle (u-k_m)^+(s),v^{r,k_m}(s)\,dW^r_s\rangle,
\end{split}
\end{equation}
where $v^{r,k_m}:=v^r1_{\{u>k_m\}}$, $f^{k_m}(\cdot,\cdot,\cdot,X,\cdot,\cdot):=f(\cdot,\cdot,\cdot,X+k_m,\cdot,\cdot)$, $g^{j,k_m}(\cdot,\cdot,\cdot,X,\cdot,\cdot):=g^j(\cdot,\cdot,\cdot,X+k_m,\cdot,\cdot)$. All terms in (\ref{Ito-for-u-k_m}) are well defined. In particular, the stochastic integral is in fact a martingale.
 Taking conditional expectations on both sides w.r.t. $\mathcal{F}_t$ yields the following estimates for the remaining terms. Similar estimates as for $I_1$ in the proof of Theorem \ref{theorem3.2} yield,
\begin{equation}\label{J_1}
\begin{split}
J_1:=&-2E\left[\int_t^T\langle\partial_j(u-k_m)^+(s),a^{ij}\partial_i(u-k_m)^+(s)+\sigma ^{jr}(s)v^{r,k_m}(s)\rangle \, ds|\mathcal{F}_t\right]\\
\leq &-\lambda E\left[\int_t^T\|\nabla(u-k_m)^+(s)\|^2\,ds|\mathcal{F}_t\right]+\frac{1}{\varrho}E\left[\int_t^T\|v^{k_m}(s)\|^2\,ds|\mathcal{F}_t\right].
\end{split}
\end{equation}
By analogy to the estimate of $I_2$, for each $\epsilon>0$ and $\theta>0$, we have that
\begin{equation}\label{J_2-1}
\begin{split}
J_2:=&-2E\left[\int_t^T\langle\partial_j(u-k_m)^+(s),g^{j,k_m}\left(s,(u-k_m)^+(s),\nabla u(s),v^{k_m}(s)\right)\rangle \, ds|\mathcal{F}_t\right]  \\
\leq &2E\left[\int_t^T\langle|\nabla(u-k_m)^+(s)|,|g^{k_m}_0|+L|(u-k_m)^+(s)|+\frac{\kappa}{2}|\nabla (u-k_m)^+(s)| \right. \\
& \left. +\sqrt{\beta}|v^{k_m}(s)|\rangle \,ds|\mathcal{F}_t\right]\\
\leq &(\kappa+\beta\theta+\epsilon)E\left[\int_t^T\|\nabla(u-k_m)^+(s)\|^2\,ds|\mathcal{F}_t\right]+\frac{1}{\theta}E\left[\int_t^T\|v^{k_m}(s)\|^2\,ds|\mathcal{F}_t\right]\\
 &+\frac{L^2}{\epsilon}E\left[\int_t^T\|(u-k_m)^+(s)\|^2\,ds|\mathcal{F}_t\right]+2E\left[\int_t^T\langle|\nabla(u-k_m)^+(s)|,|g_0^{k_m}(s)|\rangle \,ds|\mathcal{F}_t\right].
\end{split}
\end{equation}

From
\begin{align*}
\left[(u-k_{m-1})^+-(u-k_m)^+\right]1_{\{u>k_m\}}=(k_m-k_{m-1})1_{\{u>k_m\}}=2^{-m}k1_{\{u>k_m\}}
\end{align*}
we get
\begin{align}\label{estimate_indicator}
1_{\{u>k_m\}}\leq\frac{2^m(u-k_{m-1})^+}{k}1_{\{u>k_m\}}\leq\frac{2^m(u-k_{m-1})^+}{k}.
\end{align}
By (\ref{estimate_indicator}) and  ($\mathcal{A}_5$) it holds that:
\begin{equation}\label{J_2-2}
\begin{split}
&E\left[\int_t^T\langle|\nabla(u-k_m)^+(s)|,|g^{k_m}_0(s)|\rangle\, ds|\mathcal{F}_t\right] \\
\leq & \left(E\left[\int_t^T\|\nabla(u-k_m)^+(s)\|^2\,ds|\mathcal{F}_t\right]\right)^{\frac{1}{2}}\left(E\left[\int_t^T\int_{\mathcal{O}}|g^{k_m}_0(s,x)1_{\{u>k_m\}}|^2\,dxds|\mathcal{F}_t\right]\right)^{\frac{1}{2}} \\
\leq & \|\nabla (u-k_m)^+\|_{0,2;\mathcal{O}_t}\|g^{k_m}_0\|_{0,p;\mathcal{O}_t}\left(E\left[\int_t^T\int_{\mathcal{O}}1_{\{u>k_m\}}\,dxds|\mathcal{F}_t\right]\right)^{\frac{1}{2}-\frac{1}{p}} \\
\leq & \|\nabla (u-k_m)^+\|_{0,2;\mathcal{O}_t}\|g^{k_m}_0\|_{0,p;\mathcal{O}_t}\left(E\left[\int_t^T\int_{\mathcal{O}}\left(\frac{2^m(u-k_{m-1})^+}{k}\right)^{\frac{2(n+2)}{n}}\,dxds|\mathcal{F}_t\right]\right)^{\frac{1}{2}-\frac{1}{p}} \\
\leq & \left(\frac{2^m}{k}\right)^{1+\frac{2(p-n-2)}{np}}\|\nabla (u-k_m)^+\|_{0,2;\mathcal{O}_t}\|g^{k_m}_0\|_{0,p;\mathcal{O}_t}\|(u-k_{m-1})^+\|_{0,\frac{2(n+2)}{n};\mathcal{O}_t}^{1+\frac{2(p-n-2)}{np}} \\
\leq & \left(\frac{2^m}{k}\right)^{1+\frac{2(p-n-2)}{np}}\|\nabla (u-k_{m-1})^+\|_{0,2;\mathcal{O}_t}\|g^{k_m}_0\|_{0,p;\mathcal{O}_t}\|(u-k_{m-1})^+\|_{0,\frac{2(n+2)}{n};\mathcal{O}_t}^{1+\frac{2(p-n-2)}{np}} \\
\leq & \left(\frac{2^m}{k}\right)^{1+\frac{2(p-n-2)}{np}}\|\nabla (u-k_{m-1})^+\|_{0,2;\mathcal{O}_t}(\|g_0\|_{0,p;\mathcal{O}_t}+Lk_m)\|(u-k_{m-1})^+\|_{0,\frac{2(n+2)}{n};\mathcal{O}_t}^{1+\frac{2(p-n-2)}{np}}.\\
\end{split}
\end{equation}
Combining (\ref{J_2-1}) and (\ref{J_2-2}), we see that
\begin{equation}
\begin{split}
J_2\leq & (\kappa+\beta\theta+\epsilon)E\left[\int_t^T\|\nabla(u-k_m)^+(s)\|^2\,ds|\mathcal{F}_t\right]+\frac{1}{\theta}E\left[\int_t^T\|v^{k_m}(s)\|^2\,ds|\mathcal{F}_t\right]\\
 &+\frac{L^2}{\epsilon}E\left[\int_t^T\|(u-k_m)^+(s)\|^2\,ds|\mathcal{F}_t\right]\\
 &+2\left(\frac{2^m}{k}\right)^{1+\frac{2(p-n-2)}{np}}\|\nabla (u-k_{m-1})^+\|_{0,2;\mathcal{O}_t}(\|g_0\|_{0,p;\mathcal{O}_t}+Lk_m)\|(u-k_{m-1})^+\|_{0,\frac{2(n+2)}{n};\mathcal{O}_t}^{1+\frac{2(p-n-2)}{np}}.\\
\end{split}
\end{equation}

For each $\epsilon_1>0$, by (\ref{repeatedestimate}) the monotonicity of $f(t,x,r,0,0)$ yields:
\begin{equation}\label{J_3-1}
\begin{split}
J_3:=& 2E\left[\int_t^T\langle (u-k_m)^+(s),f^{k_m}\left(s,(u-k_m)^+(s),\nabla u(s),v^{k_m}(s)\right)\rangle \, ds|\mathcal{F}_t\right]
\\
\leq & 2E\left[\int_t^T\langle(u-k_m)^+(s),f_0^{k_m}(s)+L(u-k_m)^+(s)+L\nabla(u-k_m)^+(s)+L|v^{k_m}(s)|\rangle \, ds|\mathcal{F}_t\right] \\
\leq & 2E\left[\int_t^T\langle(u-k_m)^+(s),f_0(s)+L(u-k_m)^+(s)+L\nabla(u-k_m)^+(s)+L|v^{k_m}(s)|\rangle \, ds|\mathcal{F}_t\right]\\
\leq & \left(2L+\frac{2L^2}{\epsilon_1}\right)E\left[\int_t^T\|(u-k_m)^+(s)\|^2\,ds|\mathcal{F}_t\right]+\epsilon_1E\left[\int_t^T\|\nabla(u-k_m)^+(s)\|^2\,ds|\mathcal{F}_t\right] \\
&+\epsilon_1E\left[\int_t^T\|v^{k_m}(s)\|^2\,ds|\mathcal{F}_t\right]
+2E\left[\int_t^T\langle(u-k_m)^+(s),f_0(s)\rangle \,ds|\mathcal{F}_t\right].
\end{split}
\end{equation}

By (\ref{estimate_indicator}) again, we have
\begin{equation}\label{J_3-2}
\begin{split}
&E\left[\int_t^T\langle (u-k_m)^+(s),f_0(s)\rangle \,ds|\mathcal{F}_t\right] \\
\leq & E\left[\int_t^T\langle (u-k_m)^+(s),f_0^+(s)\rangle \,ds|\mathcal{F}_t\right] \\
\leq & \left(E\left[\int_t^T\int_{\mathcal{O}}|(u-k_m)^+|^{\frac{2(n+2)}{n}}\,dxds|\mathcal{F}_t\right]\right)^{\frac{n}{2(n+2)}}\left(E\left[\int_t^T\int_{\mathcal{O}}|f_0^+(s,x)|^{\frac{p(n+2)}{p+n+2}}\,dxds|\mathcal{F}_t\right]\right)^{\frac{p+n+2}{p(n+2)}} \\
&\times\left(E\left[\int_t^T\int_{\mathcal{O}}1_{\{u>k_m\}}\,dxds|\mathcal{F}_t\right]\right)^{\frac{1}{2}-\frac{1}{p}} \\
\leq & \|(u-k_m)^+\|_{0,\frac{2(n+2)}{n};\mathcal{O}_t}\|f_0^+\|_{0,\frac{p(n+2)}{p+n+2};\mathcal{O}_t}\left(E\left[\int_t^T\int_{\mathcal{O}}\left(\frac{2^m(u-k_{m-1})^+}{k}\right)^{\frac{2(n+2)}{n}}\,dxds|\mathcal{F}_t\right]\right)^{\frac{1}{2}-\frac{1}{p}} \\
\leq & \left(\frac{2^m}{k}\right)^{1+\frac{2(p-n-2)}{np}}\|(u-k_m)^+\|_{0,\frac{2(n+2)}{n};\mathcal{O}_t}\|(u-k_{m-1})^+\|_{0,\frac{2(n+2)}{n};\mathcal{O}_t}^{1+\frac{2(p-n-2)}{np}}\|f_0^+\|_{0,\frac{p(n+2)}{p+n+2};\mathcal{O}_t} \\
\leq & \left(\frac{2^m}{k}\right)^{1+\frac{2(p-n-2)}{np}}\|(u-k_{m-1})^+\|_{0,\frac{2(n+2)}{n};\mathcal{O}_t}^{2+\frac{2(p-n-2)}{np}}\|f^+_0\|_{0,\frac{p(n+2)}{p+n+2};\mathcal{O}_t}.
\end{split}
\end{equation}
Therefore, by (\ref{J_3-1}) and (\ref{J_3-2}) we conclude
\begin{equation}
\begin{split}
J_3\leq & \left(2L+\frac{2L^2}{\epsilon_1}\right)E\left[\int_t^T\|(u-k_m)^+(s)\|^2\,ds|\mathcal{F}_t\right]+\epsilon_1E\left[\int_t^T\|\nabla(u-k_m)^+(s)\|^2\,ds|\mathcal{F}_t\right] \\
&+\epsilon_1E\left[\int_t^T\|v^{k_m}(s)\|^2\,ds|\mathcal{F}_t\right]
+2\left(\frac{2^m}{k}\right)^{1+\frac{2(p-n-2)}{np}}\|(u-k_{m-1})^+\|_{0,\frac{2(n+2)}{n};\mathcal{O}_t}^{2+\frac{2(p-n-2)}{np}}\|f^+_0\|_{0,\frac{p(n+2)}{p+n+2};\mathcal{O}_t}.
\end{split}
\end{equation}

Finally, note that
\begin{align*}
\int_t^T\int_{\mathcal{O}}(u-k_m)^+\,\mu(dxds) \leq \int_t^T\int_{\mathcal{O}}(u-\xi)^+\,\mu(dxds)+\int_t^T\int_{\mathcal{O}}(\xi-\hat{\xi}^+)^+\,\mu(dxds)=0.
\end{align*}
Combining the above estimates, we get
\begin{align*}
&\|(u-k_m)^+(t)\|^2+E\left[\int_t^T\|v^{k_m}(s)\|^2\,ds|\mathcal{F}_t\right]\nonumber\\
\leq & (-\lambda+\kappa+\beta\theta+\epsilon+\epsilon_1) E\left[\int_t^T\|\nabla(u-k_m)^+\|^2\,ds|\mathcal{F}_t\right]\nonumber\\
&+\left(\frac{1}{\theta}+\frac{1}{\varrho}+\epsilon_1\right)E\left[\int_t^T\|v^{k_m}\|^2\,ds|\mathcal{F}_t\right]
+\left(2L+\frac{L^2}{\epsilon}+\frac{2L^2}{\epsilon_1}\right)E\left[\int_t^T\|(u-k_m)^+\|^2\,ds|\mathcal{F}_t\right]\nonumber\\
&+2\left(\frac{2^m}{k}\right)^{1+\frac{2(p-n-2)}{np}}\|\nabla (u-k_{m-1})^+\|_{0,2;\mathcal{O}_t}\|(u-k_{m-1})^+\|_{0,\frac{2(n+2)}{n};\mathcal{O}_t}^{1+\frac{2(p-n-2)}{np}}\left(\|g_0\|_{0,p;\mathcal{O}_t}+Lk_m\right)\nonumber\\
&+2\left(\frac{2^m}{k}\right)^{1+\frac{2(p-n-2)}{np}}\|(u-k_{m-1})^+\|_{0,\frac{2(n+2)}{n};\mathcal{O}_t}^{2+\frac{2(p-n-2)}{np}}\|f_0^{+}\|_{0,\frac{p(n+2)}{p+n+2};\mathcal{O}_t}.\nonumber\\
\end{align*}
From this it is straightforward to see that
\begin{align*}
&\min\{1,\lambda-\kappa-\beta\theta-\epsilon-\epsilon_1\}\left\{\|(u-k_m)^+(t)\|^2+E\left[\int_t^T\|\nabla(u-k_m)^+\|^2\,ds|\mathcal{F}_t\right]\right\}\nonumber\\
&+\left(1-\frac{1}{\theta}-\frac{1}{\varrho}-\epsilon_1\right)E\left[\int_t^T\|v^{k_m}\|^2\,ds|\mathcal{F}_t\right]\nonumber\\
\leq
&
\left(2L+\frac{L^2}{\epsilon}+\frac{2L^2}{\epsilon_1}\right)E\left[\int_t^T\|(u-k_m)^+\|^2\,ds|\mathcal{F}_t\right]\nonumber\\
&+2\left(\frac{2^m}{k}\right)^{1+\frac{2(p-n-2)}{np}}\|\nabla (u-k_{m-1})^+\|_{0,2;\mathcal{O}_t}\|(u-k_{m-1})^+\|_{0,\frac{2(n+2)}{n};\mathcal{O}_t}^{1+\frac{2(p-n-2)}{np}}\left(\|g_0\|_{0,p;\mathcal{O}_t}+Lk_m\right)\nonumber\\
&+2\left(\frac{2^m}{k}\right)^{1+\frac{2(p-n-2)}{np}}\|(u-k_{m-1})^+\|_{0,\frac{2(n+2)}{n};\mathcal{O}_t}^{2+\frac{2(p-n-2)}{np}}\|f_0^{+}\|_{0,\frac{p(n+2)}{p+n+2};\mathcal{O}_t}.\nonumber\\
\end{align*}
By assumption ($\mathcal{A}_2$), there exits $\theta>\varrho'$ such that $\lambda-\kappa-\theta\beta>0$ and $\frac{1}{\theta}+\frac{1}{\varrho}<1$. So, we can take $\epsilon$ and $\epsilon_1$ small enough such that $\lambda-\kappa-\beta\theta-\epsilon-\epsilon_1>0$ and $1-\frac{1}{\theta}-\frac{1}{\varrho}-\epsilon_1>0$. Taking the supremum on both sides, Lemma \ref{lemma2.6} yields,
\begin{align*}
&\|(u-k_m)^+\|^2_{\mathcal{V}_2(\mathcal{O}_t)}+\|v^{k_m}\|^2_{0,2;\mathcal{O}_t} \\
\leq & C_1(\lambda,\kappa,\beta,L,\theta,\varrho,\epsilon,\epsilon_1)\int_t^T\|(u-k_m)^+\|^2_{\mathcal{V}_2(\mathcal{O}_s)}\,ds\\
&+C_1(\lambda,\kappa,\beta,L,\theta,\varrho,n,\epsilon,\epsilon_1)\left(\frac{2^m}{k}\right)^{1+\frac{2(p-n-2)}{np}}\|(u-k_{m-1})^+\|^{2+\frac{2(p-n-2)}{np}}_{\mathcal{V}_2(\mathcal{O}_t)}\left(A_p(f_0^{+},g_0;\mathcal{O}_t)+Lk_m\right).
\end{align*}
Gronwall's inequality yields that
\begin{align}\label{gronwall}
&\|(u-k_m)^+\|_{\mathcal{V}_2(\mathcal{O}_t)}^2+\|v^{k_m}\|^2_{0,2;\mathcal{O}_t}\nonumber\\
\leq & C_2(\lambda,\kappa,\beta,L,\theta,\varrho,T,n,\epsilon,\epsilon_1)\left(\frac{2^m}{k}\right)^{1+\frac{2(p-n-2)}{np}}\|(u-k_{m-1})^+\|_{\mathcal{V}_2(\mathcal{O}_t)}^{2+\frac{2(p-n-2)}{np}}\left(A_p(f_0^+,g_0;\mathcal{O}_t)+Lk_m\right).
\end{align}
Letting $k\geq \bar{k}+\frac{A_p(f_0^+,g_0;\mathcal{O}_t)}{L}$, it follows from (\ref{gronwall}) that
\begin{align}\label{before-de-giorgi}
&\|(u-k_m)^+\|_{\mathcal{V}_2(\mathcal{O}_t)}^2+\|v^{k_m}\|^2_{0,2;\mathcal{O}_t}\nonumber\\
\leq & C_3(\lambda,\kappa,\beta,L,\theta,\varrho,T,n,\epsilon,\epsilon_1)\frac{2^{1+\frac{2(p-n-2)}{np}}}{k^{\frac{2(p-n-2)}{np}}}\left(2^{1+\frac{2(p-n-2)}{np}}\right)^{m-1}\|(u-k_{m-1})^+\|_{\mathcal{V}_2(\mathcal{O}_t)}^{2+\frac{2(p-n-2)}{np}}.
\end{align}
In terms of $a_m:=\|(u-k_m)^+\|_{\mathcal{V}_2(\mathcal{O}_t)}^2$, $C_0:=C_3(\lambda,\kappa,\beta,L,\theta,\varrho,T,\epsilon,\epsilon_1)\frac{2^{1+\frac{2(p-n-2)}{np}}}{k^{\frac{2(p-n-2)}{np}}}>0$, $b:=2^{1+\frac{2(p-n-2)}{np}}>1$ and $\delta:=\frac{(p-n-2)}{np}>0$, we get that
\begin{equation*} 
a_m\leq C_0b^{m-1}a_{m-1}^{1+\delta}.
\end{equation*}
Now, let
\[
	k\geq C_3(\lambda,\kappa,\beta,L,\theta,\varrho,T,\epsilon,\epsilon_1)^{\frac{1}{2\delta}}2^{(1+2\delta)\left(\frac{1}{2\delta^2}+\frac{1}{2\delta}\right)}\|({u}-\bar{k})^+\|_{\mathcal{V}_2(\mathcal{O}_t)}.
\]
	Then $a_0 \leq C_0^{-\frac{1}{\delta}}b^{-\frac{1}{\delta^2}}$. Therefore, Lemma \ref{lemma4.1} can be applied to get $\lim\limits_{m\rightarrow\infty}a_m=0$. Along with the above estimates for $k$ this implies that
\begin{align} \label{ubar-kbar}
\textrm{esssup}_{(\omega,t,x)\in\Omega\times \mathcal{O}_t}(u-\bar{k})^+\leq C\left(\bar{k}+A_p(f_0^+,g_0;\mathcal{O}_t)+\|(u-\bar{k})^+\|_{\mathcal{V}_2(\mathcal{O}_t)}\right),
\end{align}
where $C$ depends on $\lambda$, $\kappa$, $\beta$, $L$, $\varrho$, $T$, $p$ and $n$. The estimates of terms $\|(u-\bar{k})^+\|_{\mathcal{V}_2(\mathcal{O}_t)}$ and $\textrm{esssup}_{(\omega,t,x)\in\Omega\times\mathcal{O}_t}\hat{\xi}^+$ are given in the following Lemma \ref{estimate-u-k}(1) and Lemma \ref{MP-general-domain-quasilinear-BSPDE}(1).
Finally we arrive at
\begin{align}\label{final-step-MP}
&\textrm{esssup}_{(\omega,t,x)\in\Omega\times \mathcal{O}_t}u^+\nonumber\\
\leq & C \left(\textrm{esssup}_{(\omega,t,x)\in\Omega\times \partial_p\mathcal{O}_t}u^++\textrm{esssup}_{(\omega,t,x)\in\Omega \times \partial _p\mathcal{O}_t}{\hat{\xi}}^{+}\right.\nonumber\\
&\left.+A_p(f_0^+,g_0;\mathcal{O}_t)+B_2(f_0^+,g_0;\mathcal{O}_t)+A_p(\hat{f}^{+},\hat{g};\mathcal{O}_t)+B_2(\hat{f}^{+},\hat{g};\mathcal{O}_t)
\right),
\end{align}
where $C$ depends on $\lambda$, $\kappa$, $\beta$, $L$, $\varrho$, $T$, $p$ and $n$.

(2) For each $t\in[0,T]$, let $k\geq \textrm{esssup}_{(\omega,t,x)\in\Omega\times\partial_p\mathcal{O}_t}u^++\textrm{esssup}_{(\omega,t,x)\in\Omega\times\mathcal{O}_t}\hat{\xi}^+$. By Theorem \ref{Ito-u+} we obtain
\begin{align*} 
&\|(u-k)^+(t)\|^2+\int_t^T\|v^{k}(s)\|^2\,ds\nonumber\\
&=-2\int_t^T\langle\partial_j(u-k)^+(s),a^{ij}\partial_i(u-k)^+(s)+\sigma ^{jr}(s)v^{k,r}{s}\rangle\, ds\nonumber\\
&-2\int_t^T\langle\partial_j(u-k)^+(s),g^{j,k}(s,(u-k)^+(s),\nabla u(s),v^k(s))\rangle\, ds\nonumber\\
&+2\int_t^T\langle (u-k)^+(s),f^{k}(s,(u-k)^+(s),\nabla u(s),v^{k}(s))\rangle \,ds+2\int_{\mathcal{O}_t}(u-k)^+(s,x)\,\mu(ds,dx)\nonumber\\
&-2\int_t^T\langle (u-k)^+(s),v^{r,k}(s)\,dW^r_s\rangle.
\end{align*}
For every $k>l\geq \textrm{esssup}_{(\omega,t,x)\in\Omega\times\partial_p\mathcal{O}_t}u^++\textrm{esssup}_{(\omega,t,x)\in\Omega\times\mathcal{O}_t}\hat{\xi}^+$, we get
\begin{align*}
\left((u-l)^+-(u-k)^+\right)1_{(u>k)}=(k-l)1_{(u>k)},
\end{align*}
which implies
\begin{align*} 
1_{(u>k)}\leq \frac{(u-l)^+}{k-l}.
\end{align*}
By the assumptions on $g$ and $f$, and using the same arguments in (\ref{J_1}), (\ref{J_2-1}) and (\ref{J_2-2})-(\ref{gronwall}) we obtain that
\begin{align*}
\|(u-k)^+\|_{\mathcal{V}_2(\mathcal{O}_t)}^2+\|v^{k}\|^2_{0,2;\mathcal{O}_t}
\leq & C\frac{A_p(f_0^+,g_0;\mathcal{O}_t)}{(k-l)^{1+\frac{2(p-n-2)}{np}}}\|(u-l)^+\|_{\mathcal{V}_2(\mathcal{O}_t)}^{2+\frac{2(p-n-2)}{np}},
\end{align*}
where $C$ depends on $\lambda$, $\kappa$, $\beta$, $L$, $\varrho$, $T$ and $n$. By setting $\phi(k):=\|(u-k)^+\|_{\mathcal{V}_2(\mathcal{O}_t)}^2$, $\alpha:=1+\frac{2(p-n-2)}{np}>0$, $\zeta:=1+\frac{p-n-2}{np}$ and $C_1:=CA_p(f_0^+,g_0;\mathcal{O}_t)$, the following statement holds for each $k>l\geq \textrm{esssup}_{(\omega,t,x)\in\Omega\times\partial_p\mathcal{O}_t}u^++\textrm{esssup}_{(\omega,t,x)\in\Omega\times\mathcal{O}_t}\hat{\xi}^+$:
\begin{align*}
\phi(k)\leq \frac{C_1}{(k-l)^{\alpha}}\phi(l)^{\zeta}.
\end{align*}
If we define $d:=C_1^{\frac{1}{\alpha}}\left|\phi(\textrm{esssup}_{(\omega,t,x)\in\Omega\times\partial_p\mathcal{O}_t}u^++\textrm{esssup}_{(\omega,t,x)\in\Omega\times\mathcal{O}_t}\hat{\xi}^+)\right|^{\frac{\zeta-1}{\alpha}}2^{\frac{1+\alpha}{\alpha}}$, then by Corollary \ref{corollary-de-giorgi},
\begin{align*}
\|(u-d-\textrm{esssup}_{(\omega,t,x)\in\Omega\times\partial_p\mathcal{O}_t}u^+-\textrm{esssup}_{(\omega,t,x)\in\Omega\times\mathcal{O}_t}\hat{\xi}^+)^+\|_{\mathcal{V}_2(\mathcal{O}_t)}=0,
\end{align*}
and so Lemma \ref{estimate-u-k}(2) yields
\begin{align}\label{estimate-last-but-one-2}
\textrm{esssup}_{(\omega,t,x)\in\Omega\times \mathcal{O}_t}u^+\leq & \textrm{esssup}_{(\omega,t,x)\in\Omega\times\partial_p\mathcal{O}_t}u^++\textrm{esssup}_{(\omega,t,x)\in\Omega\times\mathcal{O}_t}\hat{\xi}^+\nonumber\\
&+CA_p(f_0^+,g_0;\mathcal{O}_t)^{\frac{1}{\alpha}}B_2(f_0^+,g_0;\mathcal{O}_t)^{\frac{2(\zeta-1)}{\alpha}},
\end{align}
where $C$ depends on $\lambda$, $\kappa$, $\beta$, $L$, $\varrho$, $n$, $p$ and $T$. Therefore
(\ref{estimate-last-but-one-2}) and (\ref{MP-general-domain-quasilinear-BSPDE-2}) yield
\begin{align*} 
\textrm{esssup}_{(\omega,t,x)\in\Omega\times \mathcal{O}_t}u^+\leq & \textrm{esssup}_{(\omega,t,x)\in\Omega\times\partial_p\mathcal{O}_t}u^++\textrm{esssup}_{(\omega,t,x)\in\Omega\times\partial_p\mathcal{O}_t}\hat{\xi}^+\nonumber\\
&+CA_p(\hat{f}^+,\hat{g};\mathcal{O}_t)^{\frac{1}{\alpha}}B_2(\hat{f}^+,\hat{g};\mathcal{O}_t)^{\frac{2(\zeta-1)}{\alpha}}\nonumber\\
&+CA_p(f_0^+,g_0;\mathcal{O}_t)^{\frac{1}{\alpha}}B_2(f_0^+,g_0;\mathcal{O}_t)^{\frac{2(\zeta-1)}{\alpha}}.
\end{align*}
\end{proof}

When the domain $\mathcal{O}$ is bounded, $\|\cdot\|_{0,2;Q}$ can be bounded by $\|\cdot\|_{0,p;Q}$ and $\|\cdot\|_{0,\frac{p(n+2)}{p+n+2};Q}$ and we have the following maximum principle for the RBSPDE (\ref{equation1.1}) on a bounded domain.

\begin{corollary}\label{MP-RBSPDE-bounded-domain}
\begin{itemize}
	\item[(1)] Assume $(\mathcal{A}_1)$-$(\mathcal{A}_5)$ hold and $\mathcal{O}$ is bounded. If the triplet $(u,v,\mu)$ is a solution to the RBSPDE (\ref{equation1.1}), then
\begin{align*} 
&\textrm{esssup}_{(\omega,t,x)\in\Omega \times Q}u^{\pm} \\
\leq& \, C\left(\textrm{esssup}_{(\omega,t,x)\in\Omega \times \partial _pQ}u^{\pm}+\textrm{esssup}_{(\omega,t,x)\in\Omega \times \partial _pQ}\hat{\xi}^{\pm}\right.\nonumber\\
&\quad \left.+A_p(f_0^{\pm},g_0;Q)+A_p(\hat{f}^{\pm},\hat{g};Q)\nonumber
 \right),
\end{align*}
where the constant $C$ depends only on $\lambda$, $\kappa$, $\beta$, $L$, $\varrho$, $T$, $p$, $n$ and $|\mathcal{O}|$.
	\item[(2)] Suppose that $(\mathcal{A}_1)$-$(\mathcal{A}_4)$ and (\ref{additional-assumption-f-g}) hold. Then for each solution $(u,v,\mu)$ to the RBSPDE (\ref{equation1.1}), it holds true that
\begin{align*} 
&\textrm{esssup}_{(\omega,t,x)\in\Omega\times Q}u^{\pm}\nonumber\\
\leq \,& \textrm{esssup}_{(\omega,t,x)\in\Omega\times\partial_pQ}u^{\pm}+\textrm{esssup}_{(\omega,t,x)\in\Omega\times\partial_pQ}\hat{\xi}^{\pm}\nonumber\\
&+C\left(A_p(f_0^{\pm},g_0;Q)+A_p(\hat{f}^{\pm},\hat{g};Q)\right),
\end{align*}
where $C$ depends on $\lambda$, $\kappa$, $\beta$, $L$, $\varrho$, $T$, $p$, $n$, and $|\mathcal{O}|$.
\end{itemize}
\end{corollary}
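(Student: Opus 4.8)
The plan is to derive Corollary~\ref{MP-RBSPDE-bounded-domain} directly from Theorem~\ref{Theorem_MP_GD} by exploiting the boundedness of $\mathcal{O}$, which allows us to absorb all the $B_2(\cdot,\cdot;Q)$ norms into the corresponding $A_p(\cdot,\cdot;Q)$ norms, and to dispense with assumption $(\mathcal{A}_5)$ in part (2) by the same absorption argument. The key observation is that on a bounded domain, H\"older's inequality in the spatial variable gives, for any exponent $q < p$, a bound $\|h(\omega,s,\cdot)\|_{L^q(\mathcal{O})}\leq |\mathcal{O}|^{1/q-1/p}\|h(\omega,s,\cdot)\|_{L^p(\mathcal{O})}$, and similarly $\|h(\omega,s,\cdot)\|_{L^q(\mathcal{O})}\leq |\mathcal{O}|^{1/q - (p+n+2)/(p(n+2))}\|h(\omega,s,\cdot)\|_{L^{p(n+2)/(p+n+2)}(\mathcal{O})}$ whenever $q \geq p(n+2)/(p+n+2)$, which holds for $q=2$ since $p > 2+4/n$ forces $p(n+2)/(p+n+2) < 2 < p$. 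Taking essential suprema over $(\omega,s)$ and then the $0,2;Q$ norm, these pointwise bounds give
\[
\|f_0^{\pm}\|_{0,2;Q}\leq C(|\mathcal{O}|,p,n)\|f_0^{\pm}\|_{0,\frac{p(n+2)}{p+n+2};Q},\qquad \|g_0\|_{0,2;Q}\leq C(|\mathcal{O}|,p)\|g_0\|_{0,p;Q},
\]
and the analogous estimates for $\hat{f}^{\pm}$ and $\hat{g}$; here one must be a little careful because the relevant norms $\|\cdot\|_{0,q;\mathcal{O}_t}$ are defined via $\mathrm{esssup}_{(\omega,s)}\|\cdot\|_{L^q(\mathcal{O})}$, so the H\"older inequality is applied pointwise in $(\omega,s)$ before taking the supremum, and the constants $|\mathcal{O}|^{1/q-1/p}$ etc.\ are genuine constants only because $\mathcal{O}$ is bounded. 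Consequently $B_2(f_0^{\pm},g_0;Q)\leq C(|\mathcal{O}|,p,n)\,A_p(f_0^{\pm},g_0;Q)$ and $B_2(\hat{f}^{\pm},\hat{g};Q)\leq C(|\mathcal{O}|,p,n)\,A_p(\hat{f}^{\pm},\hat{g};Q)$.

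For part (1), we simply substitute these two inequalities into the conclusion of Theorem~\ref{Theorem_MP_GD}(1), merging the $B_2$ terms with the $A_p$ terms and folding the new factor $C(|\mathcal{O}|,p,n)$ into the overall constant $C$, which now additionally depends on $|\mathcal{O}|$. This immediately yields the asserted bound. For part (2), the hypothesis $(\mathcal{A}_5)$ is not assumed, but on a bounded domain $(\mathcal{A}_5)$ is automatically implied by $(\mathcal{A}_1)$ (as noted in the Remark following the Standing Assumption): from $(\mathcal{A}_1)$ one has the pointwise Lipschitz bound $|g(\cdot,\cdot,\cdot,X_1,0,0)-g(\cdot,\cdot,\cdot,X_2,0,0)|\leq L|X_1-X_2|$, and integrating its $p$-th power (resp.\ $2$-nd power) over the bounded set $\mathcal{O}$ and taking the relevant suprema gives $\|g(\cdot,\cdot,\cdot,X_1,0,0)-g(\cdot,\cdot,\cdot,X_2,0,0)\|_{0,p;Q}\leq L|\mathcal{O}|^{1/p}|X_1-X_2|$ and likewise for the $0,2;Q$ norm, which is exactly $(\mathcal{A}_5)$ up to absorbing $|\mathcal{O}|^{1/p}$ (resp.\ $|\mathcal{O}|^{1/2}$) into $L$. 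Hence Theorem~\ref{Theorem_MP_GD}(1) applies under the hypotheses of Corollary~\ref{MP-RBSPDE-bounded-domain}(2) as well, and combining its conclusion with the two absorption inequalities above produces the claimed estimate — note that the mixed exponent expression $A_p^{\frac{np}{np+2(p-n-2)}}B_2^{\frac{2(p-n-2)}{np+2(p-n-2)}}$ from Theorem~\ref{Theorem_MP_GD}(2) is likewise bounded, after replacing $B_2$ by $C(|\mathcal{O}|,p,n)A_p$, by $C\,A_p^{\frac{np}{np+2(p-n-2)}+\frac{2(p-n-2)}{np+2(p-n-2)}}=C\,A_p$, giving the cleaner form $A_p(f_0^{\pm},g_0;Q)+A_p(\hat{f}^{\pm},\hat{g};Q)$ in the statement. (Alternatively one may use Theorem~\ref{Theorem_MP_GD}(1), whose conclusion already has the $A_p+B_2$ additive form, and absorb directly.)

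The argument is essentially bookkeeping, so there is no deep obstacle; the only point requiring mild care is to confirm that $2$ lies in the admissible range of H\"older exponents relative to both $p$ and $p(n+2)/(p+n+2)$, i.e.\ that $p(n+2)/(p+n+2)\leq 2\leq p$. The right inequality is immediate from $p>n+2\geq 3$. For the left, $p(n+2)/(p+n+2)\leq 2 \iff p(n+2)\leq 2p+2(n+2)\iff p\,n\leq 2(n+2) \iff p\leq 2+4/n$ — wait, this is the wrong direction; in fact $p>2+4/n$ gives $pn>2n+4=2(n+2)$, hence $p(n+2)=pn+2p>2(n+2)+2p>2p+2(n+2)$, so $p(n+2)/(p+n+2)>2$, which is false. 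Let me restate: the correct comparison is $\tfrac{p(n+2)}{p+n+2}$ versus $2$: multiplying out, $\tfrac{p(n+2)}{p+n+2}<2 \iff p(n+2)<2(p+n+2)\iff pn+2p<2p+2n+4\iff pn<2n+4\iff p<2+4/n$; since the paper assumes $p>2+4/n$ the exponent $\tfrac{p(n+2)}{p+n+2}$ \emph{exceeds} $2$, so in fact $L^2(\mathcal{O})\hookrightarrow L^{p(n+2)/(p+n+2)}(\mathcal{O})$ fails and instead $L^{p(n+2)/(p+n+2)}(\mathcal{O})\hookrightarrow L^2(\mathcal{O})$ continuously (with constant $|\mathcal{O}|^{1/2-(p+n+2)/(p(n+2))}$), which is precisely the direction we need: it lets us bound $\|f_0^{\pm}\|_{0,2;Q}$ by $\|f_0^{\pm}\|_{0,p(n+2)/(p+n+2);Q}$. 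So the embeddings all point the right way and the absorption $B_2\leq C(|\mathcal{O}|)A_p$ is valid; writing this verification out carefully, with the explicit constants $|\mathcal{O}|^{1/p}$, $|\mathcal{O}|^{1/2-1/p}$ and $|\mathcal{O}|^{1/2-(p+n+2)/(p(n+2))}$, is the one genuinely checkable step, and everything else follows by substitution into Theorem~\ref{Theorem_MP_GD}.
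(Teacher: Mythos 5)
Your overall strategy --- absorb the $B_2$ norms into the $A_p$ norms using the boundedness of $\mathcal{O}$ and then invoke Theorem \ref{Theorem_MP_GD} --- is exactly the route the paper intends, and the exponent check you eventually settle on is the right one: since $p>2+4/n$ one has $\frac{p(n+2)}{p+n+2}>2\le p$, so $\|f_0^{\pm}\|_{0,2;Q}$ and $\|g_0\|_{0,2;Q}$ are indeed controlled by $\|f_0^{\pm}\|_{0,\frac{p(n+2)}{p+n+2};Q}$ and $\|g_0\|_{0,p;Q}$ on a bounded domain. Two points need repair, however. First, the norms $\|\cdot\|_{0,q;Q}$ are \emph{not} $\mathrm{esssup}_{(\omega,s)}\|\cdot\|_{L^q(\mathcal{O})}$ (that is the $\mathcal{L}^{\infty,q}$ norm); they are the $\mathcal{M}^{0,q}$ norms built from conditional expectations of time integrals. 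The absorption $B_2\le C\,A_p$ still holds, but the clean derivation is: H\"older in $x$ (using $|\mathcal{O}|<\infty$), then H\"older in $t$ on $[s,T]$, then conditional Jensen with the concave map $x\mapsto x^{2/q}$, giving $\|h\|_{0,2;Q}\le (|\mathcal{O}|T)^{\frac12-\frac1q}\|h\|_{0,q;Q}$; the extra $T$-dependence is harmless since $C$ may depend on $T$, but your ``pointwise in $(\omega,s)$ before taking the supremum'' justification does not match the definition of these norms. Also, in a final write-up keep only the corrected exponent comparison: your first statement of it ($\frac{p(n+2)}{p+n+2}<2$) is false and is only fixed later in the text.

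Second, for part (2) your main line of argument (``Theorem \ref{Theorem_MP_GD}(1) applies \dots and produces the claimed estimate'') cannot yield the stated inequality: part (2) has the boundary terms $\textrm{esssup}_{\Omega\times\partial_pQ}u^{\pm}+\textrm{esssup}_{\Omega\times\partial_pQ}\hat{\xi}^{\pm}$ with coefficient $1$, whereas Theorem \ref{Theorem_MP_GD}(1) --- even after recovering $(\mathcal{A}_5)$ from $(\mathcal{A}_1)$ and boundedness, which is correct and is what Remark 2.2 of the paper observes --- places the constant $C$ in front of them, so that route only reproduces part (1) again. The correct derivation is the one you mention almost in passing: apply Theorem \ref{Theorem_MP_GD}(2), whose hypotheses ($(\mathcal{A}_1)$--$(\mathcal{A}_4)$ and \eqref{additional-assumption-f-g}) are exactly those of Corollary \ref{MP-RBSPDE-bounded-domain}(2), and then use $B_2\le C\,A_p$ inside the mixed-exponent terms together with $\frac{np}{np+2(p-n-2)}+\frac{2(p-n-2)}{np+2(p-n-2)}=1$; that computation, which you do write down, gives precisely the claimed form with the boundary terms untouched. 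With the argument reorganized so that part (2) rests on Theorem \ref{Theorem_MP_GD}(2) rather than (1), the proof is correct and coincides with the paper's.
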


\begin{lemma}\label{estimate-u-k}
\begin{itemize}
	\item[(1)] Under the same conditions as in Theorem \ref{Theorem_MP_GD}(1), for each $t\in[0,T]$ and each $k\geq \textrm{esssup}_{(\omega,t,x)\in\Omega\times\partial_p\mathcal{O}_t}u^++\textrm{esssup}_{(\omega,t,x)\in\Omega\times\mathcal{O}_t}\hat{\xi}^+$, we have
\begin{align*}
\|(u-k)^{+}\|_{\mathcal{V}_2(\mathcal{O}_t)}\leq C(B_2(f_0^{+},g_0;\mathcal{O}_t)+k),
\end{align*}
where $C$ depends on $\lambda$, $\kappa$, $\beta$, $L$, $\varrho$ and $T$.
	\item[(2)] Under the same conditions as in Theorem \ref{Theorem_MP_GD}(2), for each $t\in[0,T]$ and $k\geq \textrm{esssup}_{(\omega,t,x)\in\Omega\times\partial_p\mathcal{O}_t}u^++\textrm{esssup}_{(\omega,t,x)\in\Omega\times\mathcal{O}_t}\hat{\xi}^+$ and we have
\begin{align*}
\|(u-k)^+\|_{\mathcal{V}_2(\mathcal{O}_t)}\leq CB_2(f_0^+,g_0;\mathcal{O}_t),
\end{align*}
where $C$ depends on $\lambda$, $\kappa$, $\beta$, $L$, $\varrho$ and $T$.
\end{itemize}
\end{lemma}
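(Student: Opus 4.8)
\textbf{Proof proposal for Lemma \ref{estimate-u-k}.}
The plan is to run essentially the same It\^o/energy argument that produced the estimate (\ref{equation3.2}) in Theorem \ref{theorem3.2}, but now applied to the shifted nonnegative process $(u-k)^+$ instead of $u-\hat\xi$, exploiting the fact that $k$ is chosen at least as large as the parabolic-boundary sup of $u^+$ and the sup of $\hat\xi^+$. First I would apply the It\^o formula for the positive part of a weak solution (Theorem \ref{Ito-u+}) to $\|(u-k)^+(t)\|^2$, which yields exactly the identity (\ref{Ito-for-u-k_m}) with $k_m$ replaced by $k$: the terminal term vanishes because $G\le k$ on $\mathcal{O}$ (since $k\ge\textrm{esssup}\,u^+$ on $\partial_p\mathcal{O}_t\supseteq\{T\}\times\mathcal{O}$ for part of the boundary — more precisely $G=u(T,\cdot)$, so $(G-k)^+=0$), and the measure term is nonpositive by the now-familiar computation
\[
\int_t^T\!\!\int_{\mathcal O}(u-k)^+\,\mu(dx\,ds)\le\int_t^T\!\!\int_{\mathcal O}(u-\xi)^+\,\mu(dx\,ds)+\int_t^T\!\!\int_{\mathcal O}(\xi-\hat\xi^+)^+\,\mu(dx\,ds)=0,
\]
using $\xi\le\hat\xi\le\hat\xi^+\le k$. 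The boundary condition on $[0,T]\times\partial\mathcal O$ enters to guarantee $(u-k)^+$ vanishes on the lateral boundary so that the integration-by-parts/coercivity estimates are legitimate on $\mathcal V_2(\mathcal O_t)$.

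Next I would take conditional expectations w.r.t.\ $\mathcal F_t$ and bound the three remaining terms $J_1,J_2,J_3$ exactly as in the proof of Theorem \ref{Theorem_MP_GD}(1), except that I do \emph{not} invoke the indicator trick (\ref{estimate_indicator}); instead I keep things at the crude level. For $J_1$ assumption $(\mathcal A_2)$ gives the coercivity bound $-\lambda E\big[\int\|\nabla(u-k)^+\|^2\big]+\tfrac1\varrho E\big[\int\|v^k\|^2\big]$. For $J_2$, Lipschitz continuity of $g$ in $(\mathcal A_1)$ together with $(\mathcal A_5)$ (to handle the $r$-dependence of $g^k_0=g(\cdot,\cdot,\cdot,k,0,0)$, whose norm is $\le\|g_0\|_{0,p}+\|g_0\|_{0,2}+Lk$ by $(\mathcal A_5)$) and Young's inequality (\ref{repeatedestimate}) give, for small $\epsilon,\theta$, a term $(\kappa+\beta\theta+\epsilon)E\big[\int\|\nabla(u-k)^+\|^2\big]+\tfrac1\theta E\big[\int\|v^k\|^2\big]+C(\epsilon,L)E\big[\int\|(u-k)^+\|^2\big]+C\,(B_2(f_0^+,g_0;\mathcal O_t)+k)^2$. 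For $J_3$, since we may assume (as in the proof of Theorem \ref{Theorem_MP_GD}) that $r\mapsto f(\cdot,\cdot,\cdot,r,0,0)$ is non-increasing — in part (1) one first does the exponential change $\bar u=e^{Lt}u$, in part (2) this is hypothesis (\ref{additional-assumption-f-g}) — one drops the bad $f$-term and is left with $f_0^k=f(\cdot,\cdot,\cdot,k,0,0)$ controlled by $f_0$ plus, in case (1), a constant multiple of $k$; Lipschitzness in the remaining variables and (\ref{repeatedestimate}) then yield $C(\epsilon_1,L)E\big[\int\|(u-k)^+\|^2\big]+\epsilon_1 E\big[\int\|\nabla(u-k)^+\|^2+\|v^k\|^2\big]+C\,B_2(f_0^+,g_0;\mathcal O_t)^2$ (with an extra $+Ck^2$ in case (1)).

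Collecting $J_1+J_2+J_3$ and taking the essential supremum over $(\omega,s)\in\Omega\times[t,T]$, assumption $(\mathcal A_2)$ lets me pick $\theta>\varrho'$ and then $\epsilon,\epsilon_1$ small enough so that the coefficients of $\|\nabla(u-k)^+\|_{0,2;\mathcal O_t}^2$ and of $\|v^k\|_{0,2;\mathcal O_t}^2$ on the left are strictly positive, giving
\[
\|(u-k)^+\|_{\mathcal V_2(\mathcal O_t)}^2+\|v^k\|_{0,2;\mathcal O_t}^2\le C\int_t^T\|(u-k)^+\|_{\infty,2;\mathcal O_s}^2\,ds+C\big(B_2(f_0^+,g_0;\mathcal O_t)+k\big)^2,
\]
with the bracket replaced by $B_2(f_0^+,g_0;\mathcal O_t)$ alone in case (2). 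Gronwall's inequality then removes the integral term and yields the claimed bound; the constant $C$ depends only on $\lambda,\kappa,\beta,L,\varrho,T$. The only real subtlety — the ``main obstacle'' — is bookkeeping the $k$-dependence correctly: in part (1) the shifted coefficients $f_0^k,g_0^k$ pick up the additive $Lk$ terms, which is exactly why assumption $(\mathcal A_5)$ is needed on an unbounded domain (on a bounded domain it is automatic), whereas in part (2) hypothesis (\ref{additional-assumption-f-g}) forces $g(\cdot,\cdot,\cdot,k,0,0)=g_0$ and kills the $k$-contribution, producing the cleaner estimate without the $+k$. One must also be slightly careful that Theorem \ref{Ito-u+} applies to $(u-k)^+$ with the stated integrability, and that the stochastic integral $\int_t^T\langle(u-k)^+,v^{r,k}\,dW^r\rangle$ is a true martingale so that its conditional expectation vanishes — both follow as in the proof of Theorem \ref{Theorem_MP_GD}.
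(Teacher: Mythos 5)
Your argument is correct and essentially coincides with the paper's proof: apply Theorem \ref{Ito-u+} to $(u-k)^+$, note that the choice of $k$ annihilates the terminal and obstacle terms, estimate the three remaining terms exactly as $I_1$--$I_3$ in the proof of Theorem \ref{theorem3.2} (no indicator trick needed), use $(\mathcal{A}_5)$ to bound $\|g_0^k\|_{0,2;\mathcal{O}_t}\le\|g_0\|_{0,2;\mathcal{O}_t}+Lk$ (with $g_0^k=g_0$ in case (2)), and conclude with Gronwall. One cosmetic point: once you have reduced to $f(\cdot,\cdot,\cdot,r,0,0)$ non-increasing, $f_0^k\le f_0$ pointwise because $k\ge 0$, so the extra ``constant multiple of $k$'' and the $+Ck^2$ you allow in the $J_3$ bound are superfluous (and could not be justified on an unbounded domain from the pointwise Lipschitz bound alone, since the constant $Lk$ is not square integrable over $\mathcal{O}$); the $k$-dependence in part (1) enters only through $g_0^k$ via $(\mathcal{A}_5)$.
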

\begin{proof}
\begin{itemize}
\item[(1)] As in the proof of Theorem \ref{Theorem_MP_GD}, we may assume w.l.o.g that $f(t,x,r,0,0)$ is non-increasing in $r$. For
\[	
    k\geq\textrm{esssup}_{(\omega,t,x)\in\Omega\times\partial_{p}\mathcal{O}_t}u^++\textrm{esssup}_{(\omega,t,x)\in\Omega\times\mathcal{O}_t}\hat{\xi}^+,
\]
we have
\begin{align*}
\int_t^T\int_{\mathcal{O}}(u-k)^+\,\mu(dxds) \leq \int_t^T\int_{\mathcal{O}}(u-\xi)^+\,\mu(dxds)+\int_t^T\int_{\mathcal{O}}(\xi-\hat{\xi}^+)^+\,\mu(dxds)=0.
\end{align*}
\end{itemize}
Applying Theorem \ref{Ito-u+}, we have
\begin{equation}
\begin{split}
&\|(u-k)^+(t)\|^2+E\left(\int_t^T\|v^{k}(s)\|^2\,ds|\mathcal{F}_t\right)
\\
\leq &-2E\left(\int_t^T\langle\partial_j(u-k)^+(s),a^{ij}\partial_i(u-k)^+(s)+\sigma ^{jr}(s)v^{k,r}(s)\rangle\, ds|\mathcal{F}_t\right)\\
&-2E\left(\int_t^T\langle\partial_j(u-k)^+(s),g^{j,k}\left(s,(u-k)^+(s),\nabla u(s),v^{k}(s)\right)\rangle\, ds|\mathcal{F}_t\right)\\
&+2E\left(\int_t^T\langle (u-k)^+(s),f^{k}\left(s,(u-k)^+(s),\nabla u(s),v^{k}(s)\right)\rangle \,ds|\mathcal{F}_t\right)\\
:=& K_1+K_2+K_3,
\end{split}
\end{equation}
where $v^{r,k}:=v^r1_{\{u>k\}}$, $g^{j,k}(\cdot,\cdot,\cdot,X,\cdot,\cdot):=g^j(\cdot,\cdot,\cdot,X+k,\cdot,\cdot)$, $f^{k}(\cdot,\cdot,\cdot,X,\cdot,\cdot):=f(\cdot,\cdot,\cdot,X+k,\cdot,\cdot)$. The quantities $K_i$ $(i=1,2,3)$ can now be estimated by analogy to the constants $I_i$ $(i=1,2,3)$ in the proof of Theorem \ref{theorem3.2}. Specifically, $K_1$ can be estimated as $I_1$, with $u-\hat{\xi}$ and $v-\hat{v}$ being replaced by $(u-k)^+$ and $v$, respectively; $K_2$ can be estimated as $I_2$, without $\hat{g}$ (because we now have no obstacle process involved in), and $u-\hat{\xi}$ and $g^j(s,x,u,\nabla u,v)$ being replaced by $(u-k)^+$ and $g^{j,k}\left(s,x,(u-k)^+(s),\nabla u(s),v^{k}(s)\right)$, respectively and the estimate for $K_3$ is similar to that for $I_3$, without $\hat{f}$ and $u-\hat{\xi}$ and $f(s,x,u,\nabla u,v)$ being replaced by $(u-k)^+$ and $f^{k}\left(s,x,(u-k)^+,\nabla u,v^{k}\right)$, respectively. Finally, by $(\mathcal{A}_5)$, $\|g^k_0\|_{0,2;\mathcal{O}_t}$ can be estimated by $\|g_0\|_{0,2;\mathcal{O}_t}+Lk$. This yields the desired result.
\item[(2)] The proof is the same as that of $(1)$ if we note that $\|g^k_0\|_{0,2;\mathcal{O}_t}=\|g_0\|_{0,2;\mathcal{O}_t}$ by assumption.
\end{proof}

The following lemma establishes the maximum principle for quasi-linear BSPDE on general domains.
\begin{lemma}\label{MP-general-domain-quasilinear-BSPDE}
Let $(u,v)$ be a weak solution to the following quasi-linear BSPDE
\begin{equation}\label{quasilinear-BSPDE}
\left\{\begin{array}{ll}
\begin{split}
-du(t,x)&=[\partial _j(a^{ij}\partial _iu(t,x)+\sigma ^{jr}v^r(t,x))
+f(t,x,u(t,x),\nabla u(t,x),v(t,x))\\
&+\nabla \cdot g(t,x,u(t,x),\nabla u(t,x),v(t,x))]\,dt
-v^r(t,x)\,dW^r_t,
\quad (t,x)\in Q,\\
u(T,x)&=G(x),~~~x\in \mathcal{O}.
\end{split}
\end{array}
\right.
\end{equation}
\begin{itemize}
	\item[(1)] If the coefficients satisfy assumptions $(\mathcal{A}_1)$, $(\mathcal{A}_2)$, $(\mathcal{A}_3)$ and $(\mathcal{A}_5)$, then for each $t\in[0,T]$ we have
\begin{equation}\label{MP-general-domain-quasilinear-BSPDE-1}
\begin{split}
&\textrm{esssup}_{(\omega,t,x)\in\Omega \times \mathcal{O}_t}u^{\pm} \\
\leq \, & C\left(\textrm{esssup}_{(\omega,t,x)\in\Omega \times \partial _p\mathcal{O}_t}u^{\pm}+A_p(f_0^{\pm},g_0;\mathcal{O}_t)+B_2(f_0^{\pm},g_0;\mathcal{O}_t)\right)
\end{split}
\end{equation}
where $C$ depends on $\lambda$, $\kappa$, $\beta$, $L$, $\varrho$, $T$, $p$ and $n$;
	\item[(2)] If $(\mathcal{A}_1)$, $(\mathcal{A}_2)$, $(\mathcal{A}_3)$ and (\ref{additional-assumption-f-g}) hold true,
then for each $t\in[0,T]$ we have
\begin{align}\label{MP-general-domain-quasilinear-BSPDE-2}
&\textrm{esssup}_{(\omega,t,x)\in\Omega\times \mathcal{O}_t}u^{\pm}\nonumber\\
\leq \, & \textrm{esssup}_{(\omega,t,x)\in\Omega\times\partial_p\mathcal{O}_t}u^{\pm}\nonumber\\
&+C A_p(f_0^{\pm},g_0;\mathcal{O}_t)^{\frac{np}{np+2(p-n-2)}}B_2({f}_0^{\pm},{g}_0;\mathcal{O}_t)^{\frac{2(p-n-2)}{np+2(p-n-2)}},
\end{align}
where $C$ depends on $\lambda$, $\kappa$, $\beta$, $L$, $\varrho$, $n$, $p$ and T.
\end{itemize}
\end{lemma}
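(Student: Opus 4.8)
The plan is to specialize the argument of Theorem \ref{Theorem_MP_GD} to the case without obstacle. By the conventions of Section 3, a weak solution $(u,v)$ of the BSPDE (\ref{quasilinear-BSPDE}) is exactly the solution triple $(u,v,\mu)$ of an RBSPDE of the form (\ref{equation1.1}) with $\xi\equiv-\infty$ and $\mu\equiv 0$. Consequently, every estimate in the proof of Theorem \ref{Theorem_MP_GD} that did not rely on the obstacle carries over \emph{verbatim}, while all the terms involving $\hat\xi$, $\hat f$, $\hat g$ and the nonnegative measure $\mu$ simply vanish. In particular, the Itô formula of Theorem \ref{Ito-u+} applies to the positive part $(u-k)^+$ of $u-k$ (which solves a BSPDE with shifted coefficients), and all steps leading to the De Giorgi recursion $a_m\le C_0 b^{m-1}a_{m-1}^{1+\delta}$ remain unchanged. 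Both parts treat $u^+$ only; the bound for $u^-$ then follows by applying the result to $-u$.

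For part (1), exactly as in Theorem \ref{Theorem_MP_GD}(1) we first reduce, via the exponential change of variables $\bar u=e^{Lt}u$, $\bar v=e^{Lt}v$ and the corresponding transformation of the coefficients, to the case where $r\mapsto f(t,x,r,0,0)$ is non-increasing. Set $\bar k:=\textrm{esssup}_{(\omega,t,x)\in\Omega\times\partial_p\mathcal{O}_t}u^+$ (there is no $\hat\xi$ contribution now) and, for a parameter $k>0$ fixed later, $k_m:=\bar k+k(1-2^{-m})$, $m\in\mathbb{N}_0$. Since $k_m\ge\bar k$, the trace of $(u-k_m)^+$ on $\partial_p\mathcal{O}_t$ vanishes, so Theorem \ref{Ito-u+} yields the identity for $\|(u-k_m)^+(t)\|^2+\int_t^T\|v^{k_m}(s)\|^2ds$ with \emph{no} measure term; taking conditional expectation given $\mathcal{F}_t$, the three resulting quantities are estimated exactly as $J_1$, $J_2$, $J_3$ in the proof of Theorem \ref{Theorem_MP_GD} using $(\mathcal{A}_1)$, $(\mathcal{A}_2)$, $(\mathcal{A}_3)$, $(\mathcal{A}_5)$ and (\ref{estimate_indicator}). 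Choosing $\theta>\varrho'$ with $\lambda-\kappa-\beta\theta>0$ and then $\epsilon,\epsilon_1$ small enough, taking suprema and applying Lemma \ref{lemma2.6} and Gronwall's inequality reproduces the recursion (\ref{gronwall})--(\ref{before-de-giorgi}) for $a_m:=\|(u-k_m)^+\|_{\mathcal{V}_2(\mathcal{O}_t)}^2$; picking $k$ a suitable multiple of $\bar k+\tfrac1L A_p(f_0^+,g_0;\mathcal{O}_t)+\|(u-\bar k)^+\|_{\mathcal{V}_2(\mathcal{O}_t)}$ makes $a_0$ small enough for Lemma \ref{lemma4.1}, hence $a_m\to0$ and $\textrm{esssup}_{\Omega\times\mathcal{O}_t}(u-\bar k)^+\le C\big(\bar k+A_p(f_0^+,g_0;\mathcal{O}_t)+\|(u-\bar k)^+\|_{\mathcal{V}_2(\mathcal{O}_t)}\big)$. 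It remains to bound $\|(u-\bar k)^+\|_{\mathcal{V}_2(\mathcal{O}_t)}$: rerunning the argument of Lemma \ref{estimate-u-k}(1) with the obstacle terms ($\hat g$, $\hat f$, $\mu$) deleted and using $(\mathcal{A}_5)$ to control $\|g_0^{\bar k}\|_{0,2;\mathcal{O}_t}$ by $\|g_0\|_{0,2;\mathcal{O}_t}+L\bar k$ gives $\|(u-\bar k)^+\|_{\mathcal{V}_2(\mathcal{O}_t)}\le C\big(B_2(f_0^+,g_0;\mathcal{O}_t)+\bar k\big)$. Since $u^+\le\bar k+(u-\bar k)^+$, inserting this and recalling $\bar k=\textrm{esssup}_{\Omega\times\partial_p\mathcal{O}_t}u^+$ yields (\ref{MP-general-domain-quasilinear-BSPDE-1}).

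For part (2) we follow Theorem \ref{Theorem_MP_GD}(2). Under (\ref{additional-assumption-f-g}) we have $g(t,x,r,0,0)=g(t,x,0,0)$, so $\|g_0^{k}\|_{0,p;\mathcal{O}_t}=\|g_0\|_{0,p;\mathcal{O}_t}$ and $\|g_0^{k}\|_{0,2;\mathcal{O}_t}=\|g_0\|_{0,2;\mathcal{O}_t}$ for every $k$, while $r\mapsto f(t,x,r,0,0)$ is non-increasing. Fixing $k>l\ge\textrm{esssup}_{\Omega\times\partial_p\mathcal{O}_t}u^+$ and using $1_{\{u>k\}}\le(u-l)^+/(k-l)$ in the same estimates, Theorem \ref{Ito-u+}, $(\mathcal{A}_2)$, Lemma \ref{lemma2.6} and Gronwall's inequality give $\phi(k)\le C_1(k-l)^{-\alpha}\phi(l)^{\zeta}$ with $\phi(k):=\|(u-k)^+\|_{\mathcal{V}_2(\mathcal{O}_t)}^2$, $C_1:=CA_p(f_0^+,g_0;\mathcal{O}_t)$, $\alpha:=1+\tfrac{2(p-n-2)}{np}$ and $\zeta:=1+\tfrac{p-n-2}{np}$. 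Corollary \ref{corollary-de-giorgi} then yields $\|(u-d-\textrm{esssup}_{\Omega\times\partial_p\mathcal{O}_t}u^+)^+\|_{\mathcal{V}_2(\mathcal{O}_t)}=0$ with $d:=C_1^{1/\alpha}\phi\big(\textrm{esssup}_{\Omega\times\partial_p\mathcal{O}_t}u^+\big)^{(\zeta-1)/\alpha}2^{(1+\alpha)/\alpha}$, and the analogue of Lemma \ref{estimate-u-k}(2) — whose proof is unchanged because $\|g_0^{k}\|_{0,2;\mathcal{O}_t}=\|g_0\|_{0,2;\mathcal{O}_t}$ — gives $\phi\big(\textrm{esssup}_{\Omega\times\partial_p\mathcal{O}_t}u^+\big)\le CB_2(f_0^+,g_0;\mathcal{O}_t)^2$. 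Substituting, and using $\tfrac1\alpha=\tfrac{np}{np+2(p-n-2)}$, $\tfrac{2(\zeta-1)}{\alpha}=\tfrac{2(p-n-2)}{np+2(p-n-2)}$, yields (\ref{MP-general-domain-quasilinear-BSPDE-2}).

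Since the whole argument is a specialization of the proof of Theorem \ref{Theorem_MP_GD}, there is no genuinely new difficulty. The only points requiring care are: (i) checking that the Itô formula for the positive part, Theorem \ref{Ito-u+}, applies to the unreflected BSPDE (\ref{quasilinear-BSPDE}) with the trace of $(u-k)^+$ vanishing on $\partial_p\mathcal{O}_t$ — this is precisely the $\mu\equiv0$ instance of that formula; and (ii) re-deriving, without the obstacle-dependent terms, the a priori $\mathcal{V}_2$-estimates of Lemma \ref{estimate-u-k}, which is routine since the obstacle enters those estimates only through the (now vanishing) measure and the auxiliary quantities $\hat f,\hat g$.
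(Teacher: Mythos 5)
Your proposal is correct and follows essentially the same route as the paper: the paper's own proof of this lemma is a one-line reference stating that the assertion follows by repeating the estimates (\ref{J_1})--(\ref{ubar-kbar}) and Lemma \ref{estimate-u-k} with $\bar{k}=\textrm{esssup}_{(\omega,t,x)\in\Omega\times\partial_{p}\mathcal{O}_t}u^{+}$, i.e.\ exactly the obstacle-free specialization (with $\mu\equiv0$ and the $\hat\xi,\hat f,\hat g$ terms deleted) that you carry out in detail for both parts.
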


\begin{proof}
    In terms of $\bar{k}=\textrm{esssup}_{(\omega,t,x)\in\Omega\times\partial_{p}\mathcal{O}_t}u^+$ the assertion follows by establishing estimates analogous to (\ref{J_1})-(\ref{ubar-kbar}) and Lemma \ref{estimate-u-k}(1).
\end{proof}

The proceeding lemmas allow us to establish the comparison principle for the quasi-linear BSPDE on a general domain.
\begin{corollary}\label{comparison-quasi-linear-BSPDE}
Let $(u_i,v_i)$ be solutions to the quasi-linear BSPDE (\ref{quasilinear-BSPDE}) with parameters $(f_i,g,G_i,a,\sigma)$ respectively, $i=1,2$. Suppose that assumptions in Lemma 4.4 hold and that  $(u_1-u_2)^+|_{\partial\mathcal{O}}=0$. Then if $f_1(t,x,u_2,\nabla u_2,v_2)\leq f_2(t,x,u_2,\nabla u_2,v_2)$ $dt\times dx\times d\mathbb{P}\textrm{-}a.e.$ and $G_1\leq G_2$ $dx\times d\mathbb{P}\textrm{-}a.e.$, we have $u_1\leq u_2$ $dt\times dx\times d\mathbb{P}\textrm{-}a.e.$.
\begin{proof}
Let $(\underline{u},\underline{v})=(u_1-u_2,v_1-v_2)$. Then $(\underline{u},\underline{v})$ is a solution to the quasi-linear BSPDE (\ref{quasilinear-BSPDE}) with parameters $(\underline{f},\underline{g},\underline{G},a,\sigma)$, where
\begin{align*}
&\underline{f}(t,x,\cdot,\cdot,\cdot)=f_1(t,x,\cdot+u_2,\cdot+\nabla u_2,\cdot+v_2)-f_2(t,x,u_2,\nabla u_2,v_2)\nonumber\\
&\underline{g}(t,x,\cdot,\cdot,\cdot)=g(t,x,\cdot+u_2,\cdot+\nabla u_2,\cdot+v_2)-g(t,x,u_2,\nabla u_2,v_2)\nonumber\\
&\underline{G}=G_1-G_2.\nonumber
\end{align*}
Then we have $\underline{f}_0:=\underline{f}(\cdot,\cdot,0,0,0)\leq 0$, $\underline{g}_0:=\underline{g}(\cdot,\cdot,0,0,0)=0$ and $\textrm{esssup}_{\Omega\times\partial_p Q}u^+=0$. Therefore by Lemma \ref{estimate-u-k} or Lemma \ref{MP-general-domain-quasilinear-BSPDE}, there holds that $u_1\leq u_2$ $dt\times dx\times d\mathbb{P}\textrm{-}a.e.$.
\end{proof}
\end{corollary}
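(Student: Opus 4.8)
The plan is to reduce the comparison to the maximum principle of Lemma~\ref{MP-general-domain-quasilinear-BSPDE} applied to the difference $\underline u := u_1 - u_2$. Since the two equations share the same leading operator (the same $a$ and $\sigma$), subtracting the two copies of \eqref{quasilinear-BSPDE} shows that $(\underline u,\underline v):=(u_1-u_2,v_1-v_2)$ is a weak solution of the quasi-linear BSPDE \eqref{quasilinear-BSPDE} with terminal datum $\underline G:=G_1-G_2$ and the shifted coefficients
\[
\underline f(t,x,r,y,z):=f_1(t,x,r+u_2,y+\nabla u_2,z+v_2)-f_2(t,x,u_2,\nabla u_2,v_2),
\]
\[
\underline g(t,x,r,y,z):=g(t,x,r+u_2,y+\nabla u_2,z+v_2)-g(t,x,u_2,\nabla u_2,v_2),
\]
so that plugging in $(\underline u,\nabla\underline u,\underline v)$ recovers exactly $f_1(\cdot,u_1,\nabla u_1,v_1)-f_2(\cdot,u_2,\nabla u_2,v_2)$ and the analogous expression for $g$.

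Next I would verify that $(\underline f,\underline g,\underline G,a,\sigma)$ lies within the scope of Lemma~\ref{MP-general-domain-quasilinear-BSPDE}: assumption $(\mathcal{A}_2)$ is untouched since $a,\sigma$ are unchanged; the Lipschitz estimates $(\mathcal{A}_1)$ and the norm-Lipschitz estimate $(\mathcal{A}_5)$ are invariant under translation of the $(r,y,z)$-arguments and hence inherited by $\underline f$ and $\underline g$; and for the free terms one has $\underline g_0:=\underline g(\cdot,\cdot,\cdot,0,0,0)=g(\cdot,u_2,\nabla u_2,v_2)-g(\cdot,u_2,\nabla u_2,v_2)=0$ and $\underline f_0:=\underline f(\cdot,\cdot,\cdot,0,0,0)=f_1(\cdot,u_2,\nabla u_2,v_2)-f_2(\cdot,u_2,\nabla u_2,v_2)$, the latter lying in the function spaces required by $(\mathcal{A}_3)$ by the Lipschitz bounds on $f_1,f_2$ together with the a priori regularity of $(u_2,\nabla u_2,v_2)$ and of $f_{1,0},f_{2,0}$.

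The heart of the argument is then the sign bookkeeping. By hypothesis $f_1(t,x,u_2,\nabla u_2,v_2)\le f_2(t,x,u_2,\nabla u_2,v_2)$, so $\underline f_0\le 0$, hence $\underline f_0^{+}=0$; we already observed $\underline g_0=0$; and $\underline G=G_1-G_2\le 0$, which combined with the assumed boundary condition $(u_1-u_2)^{+}|_{\partial\mathcal{O}}=0$ gives $\underline u^{+}|_{\partial_p Q}=0$, i.e.\ $\mathrm{esssup}_{\Omega\times\partial_p Q}\underline u^{+}=0$. Feeding these into the estimate \eqref{MP-general-domain-quasilinear-BSPDE-1} (equivalently, running the De Giorgi iteration behind Theorem~\ref{Theorem_MP_GD}/Lemma~\ref{estimate-u-k} with $\bar k=0$ and vanishing source terms) makes every term on the right-hand side vanish, whence $\mathrm{esssup}_{\Omega\times Q}\underline u^{+}=0$, which is precisely $u_1\le u_2$ $dt\times dx\times d\mathbb{P}$-a.e.

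I expect the only genuine obstacle to be the verification that the shifted data $(\underline f,\underline g,\underline G)$ satisfy the standing assumptions of Lemma~\ref{MP-general-domain-quasilinear-BSPDE}, in particular that $\underline f_0$ has the integrability demanded by $(\mathcal{A}_3)$; this is handled by invoking the a priori bounds already available for $u_2$ (boundedness via the maximum principle, $\nabla u_2\in\mathcal{M}^{0,2}$, $v_2\in\mathcal{M}^{0,2}$) and by noting that, since $\underline f_0\le 0$, only its (vanishing) positive part $\underline f_0^{+}$ actually enters the maximum-principle estimate. Everything else — deriving the equation for $\underline u$, and checking the translation invariance of $(\mathcal{A}_1)$ and $(\mathcal{A}_5)$ — is routine.
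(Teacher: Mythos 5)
Your proposal is correct and follows essentially the same route as the paper's own proof: pass to the difference $(\underline{u},\underline{v})=(u_1-u_2,v_1-v_2)$ with the shifted coefficients $\underline{f},\underline{g}$ and terminal value $\underline{G}=G_1-G_2$, observe $\underline{f}_0\le 0$, $\underline{g}_0=0$ and $\operatorname{esssup}_{\Omega\times\partial_pQ}\underline{u}^{+}=0$, and conclude via the maximum principle of Lemma \ref{MP-general-domain-quasilinear-BSPDE} (equivalently the De Giorgi scheme behind Lemma \ref{estimate-u-k}). Your additional remarks on the translation invariance of $(\mathcal{A}_1)$, $(\mathcal{A}_5)$ and the integrability of $\underline{f}_0$ only spell out details the paper leaves implicit.
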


\subsection{Local Behavior of the Random Field $u^{\pm}$}
The global maximum principle in Theorem \ref{Theorem_MP_GD} tells us that if the random field $u^{\pm}$ is bounded on the parabolic boundary, it must be bounded in the whole domain. This section studies the local behavior of $u^{\pm}$ when it is not necessarily bounded on the parabolic boundary.
\begin{definition}\label{cutofffunction}
A function $\zeta$ is called a cut-off function on the sub-domain $Q'\subset Q$ if it satisfies the following properties:
\begin{itemize}
	\item[(1)] there exits some smooth function sequence $\{\zeta_m\}\subset C_0^{\infty}(Q')$ such that $\zeta_m$, $\partial_s\zeta_m$ and $\nabla\zeta_m$ converge to $\zeta$, $\partial_s\zeta$ and $\nabla\zeta$ in $L^{\infty}(Q')$ respectively;
	\item[(2)] $\zeta\in [0,1]$;
	\item[(3)] there exits a domain $Q'' \subset\subset Q'$ and a nonempty domain $Q''' \subset\subset Q''$ such that
            $$  \zeta(t,x)=
   \begin{cases}
   0 &\mbox{if $(t,x)\in Q' \backslash Q''$}\\
   1 &\mbox{if $(t,x)\in Q'''$,}
   \end{cases}$$
   where by $A\subset\subset B$ we mean the closure $\bar{A}\subseteq B$.
\end{itemize}
\end{definition}

We modify the definition of backward stochastic parabolic De Giorgi class in \cite{qiutang} as follows.
\begin{definition}
We say a function $u\in\mathcal{V}_{2,0}(Q)$ belongs to a backward stochastic parabolic De Giorgi class $BSPDG^{\pm}(a_0,b_0,k_0,\eta;\delta,Q)$ with $$(a_0,b_0,k_0,\eta,\delta)\in[0,\infty)\times[0,\infty)\times[0,\infty)\times(n+2,\infty)\times(0,1),$$
 if for any $Q_{\rho,\tau}:=[t_0-\tau,t_0)\times B_{\rho}(x_0)\subset Q$ with $(\rho,\tau)\in (0,\delta]\times (0,\delta^2]$, each cut-off function $\zeta$ on $Q_{\rho,\tau}$ and for each $k\geq k_0$, we have
\begin{align}
\|\zeta (u-k)^{\pm}\|^2_{\mathcal{V}_2(Q_{\rho,\tau})}
&\leq b_0 \bigg\{
\|(u-k)^{\pm}\|^2_{0,2;Q_{\rho,\tau}}\left(1+\|\partial _t\zeta\|_{L^{\infty}(Q_{\rho,\tau})}+\|\nabla \zeta\|^2_{L^{\infty}(Q_{\rho,\tau})}\right)\nonumber\\
&\quad \left.+\left(k^2+a_0^2\right)|{(u-k)^{\pm}>0}|_{\infty;Q_{\rho,\tau}}^{1-\frac{2}{\eta}}\right\},
\label{relatn-DG}
\end{align}
where $|{(u-k)^{\pm}>0}|_{\infty;Q_{\rho,\tau}}:=\textrm{esssup}_{\omega\in \Omega}\sup_{s\in[t_0-\tau,t_0)}E\left[\int_{[s,t_0)\times B_{\rho}(x_0)}1_{\{(u(t,x)-k)^{\pm}>0\}}\,dxdt|\mathcal{F}_s\right]$.
\end{definition}

Here, we take $(k,\rho, \tau)\in [k_0,\infty)\times (0,\delta]\times (0,\delta^2]$ for given $(k_0,\delta)\in [0,\infty)\times(0,1)$ in the above definition, instead of $(k,\rho, \tau)\in \bR\times (0,1)\times (0,1)$ as in \cite[Definition 5.2]{qiutang}. However, a direct extension of \cite[Theorem 5.8]{qiutang} yields the following lemma.
%
\begin{lemma}\label{lem-DG}
Given $k_0^{\pm}\geq 0$, if $u\in BSPDG^{\pm}(a^{\pm}_0,b^{\pm}_0,k_0^{\pm},\eta;\delta,Q)$, then
\begin{align*} 
\textrm{esssup}_{(\omega,t,x)\in \Omega\times Q_{\frac{\rho}{2}}}u^{\pm}
&\leq 2k_0^{\pm} + C_{\pm} \left\{\rho^{-\frac{n+2}{2}}\|u^{\pm}\|_{0,2;Q_{\rho}}+a_0^{\pm}\rho^{1-\frac{2+n}{\eta}}\right\},
\end{align*}
where $Q_{\rho}:=[t_0-\rho ^2,t_0)\times B_{\rho}(x_0)\subset Q$ with $\rho \in (0,\delta]$ and the constants $C_{\pm}$ depend on $a^{\pm}_0$, $b^{\pm}_0$ and $n$.
\end{lemma}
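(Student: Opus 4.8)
The claim is an $L^\infty$ estimate for the positive/negative parts of $u$ on a slightly smaller parabolic cylinder, obtained purely from the structural inequality \eqref{relatn-DG} defining the class $BSPDG^\pm$. The plan is to run a De Giorgi iteration over a decreasing sequence of concentric cylinders, using the ``energy'' quantity $\|\zeta(u-k)^\pm\|_{\mathcal{V}_2}^2$ together with the embedding from $\mathcal{V}_2$ into a higher Lebesgue exponent in order to convert the $L^2$-norm on the larger cylinder at level $k_{j-1}$ into control of the measure of the super-level set $\{(u-k_j)^\pm>0\}$ at the next stage. Since the statement is declared to be ``a direct extension of \cite[Theorem 5.8]{qiutang}'', the only genuine work is checking that nothing in that argument used the bound $\rho,\tau<1$ in an essential way; the modification $(k,\rho,\tau)\in[k_0,\infty)\times(0,\delta]\times(0,\delta^2]$ is exactly tailored so the iteration stays inside the admissible range.

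\textbf{Step 1: choice of iteration parameters.} Fix $\rho\in(0,\delta]$, the centre $(t_0,x_0)$, and set $\rho_j=\tfrac{\rho}{2}+2^{-j-1}\rho$, so $\rho_0=\rho$ and $\rho_j\downarrow\tfrac{\rho}{2}$, and cylinders $Q_j:=[t_0-\rho_j^2,t_0)\times B_{\rho_j}(x_0)$, which are nested with $Q_\infty=Q_{\rho/2}$. Fix a level increment: put $k_j=2k_0^{\pm}+M(1-2^{-j})$ for a constant $M$ to be chosen, so $k_j\geq k_0^\pm$ for all $j$ and $k_j\uparrow 2k_0^\pm+M$. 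On each pair $Q_{j-1}\supset Q_j$ choose a cut-off function $\zeta_j$ on $Q_{j-1}$ with $\zeta_j\equiv 1$ on $Q_j$, $0\le\zeta_j\le1$, and the standard gradient/time-derivative bounds $\|\nabla\zeta_j\|_{L^\infty}\le C2^j/\rho$, $\|\partial_t\zeta_j\|_{L^\infty}\le C4^j/\rho^2$ (such a $\zeta_j$ exists by Definition \ref{cutofffunction} applied to the annular geometry).

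\textbf{Step 2: the recursive inequality.} Apply \eqref{relatn-DG} with $k=k_j$, $\zeta=\zeta_j$ on $Q_{j-1}$. The left side dominates $\|(u-k_j)^\pm\|_{\mathcal{V}_2(Q_j)}^2$. For the right side, first bound $|(u-k_j)^\pm>0|_{\infty;Q_{j-1}}$ using $1_{\{(u-k_j)^\pm>0\}}\le \big(\tfrac{2^{j}}{M}(u-k_{j-1})^\pm\big)^{q}$ for any $q\ge1$ (the Chebyshev/level-gap trick, exactly as in \eqref{estimate_indicator}); choosing $q=\tfrac{2(n+2)}{n}$ and invoking the parabolic embedding $\|w\|_{0,2(n+2)/n;Q_{j-1}}\le C\|w\|_{\mathcal{V}_2(Q_{j-1})}$ (Lemma in the appendix / the embedding behind $\mathcal{V}_2$), one converts both the $L^2$ term and the measure term on the right into a power of $a_{j-1}:=\|(u-k_{j-1})^\pm\|_{\mathcal{V}_2(Q_{j-1})}^2$. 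Collecting the $2^j$, $4^j$ factors and the $\rho$-powers produces
\begin{equation*}
a_j\le C\,b_0^\pm\,\big(1+\rho^{-2}\big)\,\Big(\tfrac{C^j}{M}\Big)^{1+\sigma}\big(M^2+(a_0^\pm)^2\big)\,a_{j-1}^{1+\sigma},\qquad \sigma:=\tfrac{2}{n}\Big(1-\tfrac{n+2}{\eta}\Big)\wedge(\text{something}>0),
\end{equation*}
i.e. a relation of the form $a_j\le C_0\,b^{j}\,a_{j-1}^{1+\sigma}$ with $b>1$, $\sigma>0$, and $C_0$ absorbing the $\rho$-powers, $M$, $a_0^\pm$, $b_0^\pm$, $n$.

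\textbf{Step 3: closing the iteration and extracting the bound.} By Lemma \ref{lemma4.1} (the standard fast-geometric-decay lemma), if $a_0=\|(u-2k_0^\pm)^+\|_{\mathcal{V}_2(Q_\rho)}^2\le C_0^{-1/\sigma}b^{-1/\sigma^2}$ then $a_j\to0$, which forces $(u-2k_0^\pm-M)^\pm=0$ a.e.\ on $Q_{\rho/2}$, i.e. $\operatorname*{esssup}_{Q_{\rho/2}}u^\pm\le 2k_0^\pm+M$. One then chooses $M$ minimal subject to the smallness condition on $a_0$: solving $C_0^{1/\sigma}\,b^{1/\sigma^2}\,a_0\le 1$ for $M$ (recalling $C_0$ contains the factor $(1+\rho^{-2})(M^2+(a_0^\pm)^2)/M^{1+\sigma}$, so this is a scalar inequality in $M$) yields $M\le C_\pm\big(\rho^{-(n+2)/2}\|u^\pm\|_{0,2;Q_\rho}+a_0^\pm\rho^{\,1-(2+n)/\eta}\big)$ after tracking the homogeneities in $\rho$; the exponent $-(n+2)/2$ on $\rho$ and $1-(2+n)/\eta$ are precisely what the parabolic scaling of $\mathcal{V}_2$ and of the $|\,\cdot\,>0|_{\infty}$-term dictate. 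This is the asserted estimate.

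\textbf{Main obstacle.} The routine part is the algebra of exponents; the one place demanding care is \emph{bookkeeping the $\rho$-dependence} so that the final constants $C_\pm$ depend only on $a_0^\pm,b_0^\pm,n$ and not on $\rho$ or on the cylinder. Because \eqref{relatn-DG} carries the factor $(1+\|\partial_t\zeta\|_{L^\infty}+\|\nabla\zeta\|^2_{L^\infty})$, which on $Q_{j-1}$ is of size $\rho^{-2}4^j$, one must verify that after the embedding step the accumulated $\rho^{-2}$ combines correctly with the volume factors hidden in the embedding constant on a ball of radius $\rho_{j-1}$ and in $|(u-k_{j-1})^\pm>0|_\infty$ to give exactly the claimed powers $\rho^{-(n+2)/2}$ and $\rho^{1-(2+n)/\eta}$; keeping the iteration levels $\rho_j\in(0,\delta]$ and $\tau_j\in(0,\delta^2]$ throughout (which holds since $\rho_j\le\rho\le\delta$) is what legitimizes each application of \eqref{relatn-DG}. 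Everything else is a verbatim transcription of the scalar De Giorgi argument behind \cite[Theorem 5.8]{qiutang}.
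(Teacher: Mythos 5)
Your overall architecture (nested cylinders, rising levels, cut-off functions, a recursion closed by Lemma \ref{lemma4.1}, and a final choice of $M$) is the right one, and indeed the paper offers no written proof here but delegates to \cite[Theorem 5.8]{qiutang}, whose local scheme iterates $L^2$-quantities on shrinking cylinders. The genuine gap is in your Step 3: you take as iteration quantity $a_j=\|(u-k_j)^{\pm}\|^2_{\mathcal{V}_2(Q_j)}$ and impose the smallness condition of Lemma \ref{lemma4.1} on $a_0=\|(u-2k_0^{\pm})^{\pm}\|^2_{\mathcal{V}_2(Q_\rho)}$. This quantity contains $\|\nabla u^{\pm}\|_{0,2;Q_\rho}$ on the \emph{full} cylinder $Q_\rho$, which is not controlled by the hypotheses in terms of $\|u^{\pm}\|_{0,2;Q_\rho}$; hence ``solving for $M$'' can only produce a bound involving $\|(u-2k_0^{\pm})^{\pm}\|_{\mathcal{V}_2(Q_\rho)}$, not the asserted $\rho^{-\frac{n+2}{2}}\|u^{\pm}\|_{0,2;Q_\rho}$. (Your $\mathcal{V}_2$-based iteration mirrors the global proof of Theorem \ref{Theorem_MP_GD}, where the separate energy estimate of Lemma \ref{estimate-u-k} bounds the initial $\mathcal{V}_2$-quantity by the data; no such estimate is available locally.) The standard repair, and what the cited result actually does, is to iterate $Y_j=\|(u-k_j)^{\pm}\|^2_{0,2;Q_j}$: within each step pass from $Y_{j-1}$ to $Y_j$ by H\"older against the level-set measure, the embedding of Lemma \ref{lemma2.6} applied to $\zeta_j(u-k_j)^{\pm}$, and only then \eqref{relatn-DG}; the initial quantity is then $Y_0\le\|u^{\pm}\|^2_{0,2;Q_\rho}$, as the statement requires.

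A second point you gloss over is the factor $k^2$ in \eqref{relatn-DG}, and with it the provenance of the additive $2k_0^{\pm}$. With your levels $k_j=2k_0^{\pm}+M(1-2^{-j})$ the increments are $2^{-j}M$, so after the level-gap trick the term $(k_j^2+a_0^2)\,|\cdot|^{1-\frac2\eta}$ contributes a factor of order $(k_0^{\pm})^2 M^{-q}$, $q=\tfrac{2(n+2)}{n}\bigl(1-\tfrac2\eta\bigr)$, to the constant $C_0$ of the recursion; when $k_0^{\pm}$ is large relative to $M$, the smallness threshold in Lemma \ref{lemma4.1}, and hence the admissible $M$, depends on $k_0^{\pm}$, so the constants $C_{\pm}$ would not be independent of $k_0^{\pm}$ as the lemma asserts. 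The remedy is to take increments proportional to $k_0^{\pm}+M$, e.g. $k_j=k_0^{\pm}+(k_0^{\pm}+M)(1-2^{-j})$: then $k_j\le 2^{j+1}(k_j-k_{j-1})$, the $k_j^2$ factor is absorbed because $q>2$ (equivalently $\eta>n+2$), the terminal level is $2k_0^{\pm}+M$ --- which is exactly where the $2k_0^{\pm}$ in the statement comes from --- and $M$ can then be chosen of the claimed form by the $\rho$-bookkeeping you correctly identify as the remaining work.
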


%

For the solution to the RBSPDE (\ref{equation1.1}), we further have the following result.

\begin{lemma}\label{RBSPDE-DG}
Let assumptions $(\mathcal{A}_1)$-$(\mathcal{A}_4)$ hold. Suppose $(u,v,\mu)$ is a solution to the RBSPDE (1.1). Given $Q_{\delta}:=[t_0-\delta ^2,t_0)\times B_{\delta}(x_0)\subset Q$ with $\delta \in (0,1)$, let $k_0^{\pm}=\textrm{esssup}_{\Omega\times Q_{\delta}}\hat{\xi}^{\pm}$. Then we have $u\in BSPDG^{\pm}(a^{\pm}_0,b_0,k^{\pm}_0,\eta;\delta,Q_{\delta})$ with $\eta=p$, $a^{\pm}_0=A_p(f_0^{\pm},g_0;Q_{\delta})$ and $b_0$ depending on $\lambda$, $\kappa$, $\beta$, $\varrho$, $\Lambda$, $L$, $n$ and $p$.
\end{lemma}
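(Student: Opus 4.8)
The plan is to show directly that the weak solution $u$ of the RBSPDE \eqref{equation1.1} satisfies the energy inequality \eqref{relatn-DG} defining the backward stochastic parabolic De Giorgi class, locally on $Q_\delta$. The starting point is the It\^o formula for the positive (resp.\ negative) part of $u$, Theorem \ref{Ito-u+}, applied on a parabolic cylinder $Q_{\rho,\tau}=[t_0-\tau,t_0)\times B_\rho(x_0)\subset Q_\delta$ but tested against a cut-off function $\zeta$ on $Q_{\rho,\tau}$; that is, I would apply the It\^o formula to $\|\zeta(\cdot)(u-k)^+(\cdot)\|^2$ (or approximate first with the smooth $\zeta_m$ from Definition \ref{cutofffunction} and pass to the limit using the $L^\infty$-convergence of $\zeta_m,\partial_s\zeta_m,\nabla\zeta_m$). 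This produces extra lower-order terms carrying the derivatives of $\zeta$, namely $\langle (u-k)^+, 2\zeta\partial_s\zeta\,(u-k)^+\rangle$ from the time derivative and cross terms $\langle \zeta\nabla\zeta\,(u-k)^+, \text{(diffusion/first-order terms)}\rangle$ from the leading operator and from $g$. The obstacle term is handled exactly as in the proof of Theorem \ref{Theorem_MP_GD}: since $k\ge k_0^\pm=\mathrm{esssup}_{\Omega\times Q_\delta}\hat\xi^\pm$, one has
\[
\int_{Q_{\rho,\tau}}\zeta^2(u-k)^+\,\mu(dx\,ds)\le \int_{Q_{\rho,\tau}}\zeta^2(u-\xi)^+\,\mu(dx\,ds)+\int_{Q_{\rho,\tau}}\zeta^2(\xi-\hat\xi^+)^+\,\mu(dx\,ds)=0,
\]
so the measure term drops out after taking conditional expectations.

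Next I would estimate the remaining terms after taking $E[\,\cdot\,|\mathcal F_s]$, reusing verbatim the coercivity/Young-inequality bookkeeping from the estimates $J_1,J_2,J_3$ in the proof of Theorem \ref{Theorem_MP_GD}, but now with the factor $\zeta$ inserted. Assumption $(\mathcal A_2)$ gives the coercive term $-\lambda E[\int\|\nabla(\zeta(u-k)^+)\|^2]$ modulo a $\tfrac1\varrho$-fraction of the $v^k$ energy; the Lipschitz bounds in $(\mathcal A_1)$ on $g$ and $f$ absorb the gradient cross terms into $\kappa,\beta\theta,\epsilon$ multiples of $\|\nabla(\zeta(u-k)^+)\|^2$ plus $\|(u-k)^+\|_{0,2}^2$ contributions, and the inhomogeneous pieces involving $g_0^k$ and $f_0^k$ are treated by H\"older with the exponents $\tfrac{p(n+2)}{p+n+2}$ and $p$ against $1_{\{(u-k)^\pm>0\}}$, exactly producing the factor $|(u-k)^\pm>0|_{\infty;Q_{\rho,\tau}}^{1-2/p}$ and the norms $A_p(f_0^\pm,g_0;Q_\delta)$ that become $a_0^\pm$. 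The point where $k^2$ enters is the shift $g^{j,k}=g^j(\cdot,X+k,\cdot,\cdot)$: on the support of $1_{\{(u-k)^+>0\}}$ one writes $|g_0^k|\le |g_0|+Lk$ using the Lipschitz bound in $X$, and the $Lk$ part, after H\"older, yields the $(k^2+a_0^2)|(u-k)^\pm>0|^{1-2/p}$ term. The $\zeta$-derivative terms are handled by Young's inequality: $\langle 2\zeta\partial_s\zeta (u-k)^+,(u-k)^+\rangle$ is bounded by $\|\partial_s\zeta\|_{L^\infty}\|(u-k)^+\|_{0,2}^2$, and each gradient cross term $\langle \zeta\nabla\zeta(u-k)^+,\cdots\rangle$ is split into $\epsilon\|\zeta\nabla(u-k)^+\|^2 + C(\epsilon)\|\nabla\zeta\|_{L^\infty}^2\|(u-k)^+\|_{0,2}^2$ (plus inhomogeneous contributions already accounted for), so that the factor $1+\|\partial_s\zeta\|_{L^\infty}+\|\nabla\zeta\|_{L^\infty}^2$ multiplying $\|(u-k)^+\|_{0,2}^2$ appears as required. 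Choosing $\theta>\varrho'$ with $\lambda-\kappa-\beta\theta>0$ by $(\mathcal A_2)$, then $\epsilon,\epsilon_1$ small, makes $\min\{1,\lambda-\kappa-\beta\theta-\epsilon-\epsilon_1\}>0$ and $1-\tfrac1\theta-\tfrac1\varrho-\epsilon_1>0$, so these can be divided out; the residual $C\|(u-k)^+\|_{0,2}^2$ term is absorbed by Gronwall in the backward variable after taking the supremum over $(\omega,s)$, exactly as in \eqref{gronwall}. Collecting the surviving terms gives \eqref{relatn-DG} with $b_0$ depending only on $\lambda,\kappa,\beta,\varrho,\Lambda,L,n,p$ (the $\Lambda$ enters through the bounds $|a|+|\sigma|\le\Lambda$ used in the cross terms), $\eta=p$, $a_0^\pm=A_p(f_0^\pm,g_0;Q_\delta)$, and $k_0^\pm=\mathrm{esssup}_{\Omega\times Q_\delta}\hat\xi^\pm$.

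The main obstacle I anticipate is the rigorous justification of the localized It\^o formula for $\|\zeta(u-k)^+\|^2$: Theorem \ref{Ito-u+} is stated for $(u-k)^+$ without a spatial cut-off, and one must either reprove it with the weight $\zeta^2$ (legitimate since $\zeta^2(u-k)^+$ has the same $\mathcal V_{2,0}$-regularity and $\zeta$ is bounded with bounded derivatives) or, more carefully, test the weak formulation against $\zeta_m^2\psi_\varepsilon(u-k)$ for a smooth approximation $\psi_\varepsilon$ of $r\mapsto (r^+)^2/2$ and a mollification, then pass $m\to\infty$, $\varepsilon\to 0$, controlling the quasi-continuous representative and the measure term. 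A secondary but routine difficulty is verifying that all the H\"older/Sobolev manipulations are legal on the \emph{bounded} cylinder $Q_{\rho,\tau}$ (where $|Q_{\rho,\tau}|<\infty$, so the Lebesgue-measure estimate $|(u-k)^\pm>0|^{1/2-1/p}$ is finite) and that the constants do not degenerate as $\rho,\tau\to0$ — which is precisely why the bound is stated with the explicit dependence on $\|\partial_s\zeta\|_{L^\infty}$ and $\|\nabla\zeta\|_{L^\infty}^2$ rather than on $\rho,\tau$ directly. Everything else is a direct transcription of the computations already carried out in the proofs of Theorem \ref{theorem3.2} and Theorem \ref{Theorem_MP_GD}.
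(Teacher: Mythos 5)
Your proposal is correct and follows essentially the same route as the paper: apply the It\^o formula of Theorem \ref{Ito-u+} (whose $x$-dependent $\Phi$ accommodates the weight) with the smooth cut-offs $\zeta_m$, pass to the limit $\zeta_m\to\zeta$, kill the measure term using $k\ge k_0^{\pm}=\mathrm{esssup}_{\Omega\times Q_\delta}\hat{\xi}^{\pm}$ together with the Skorokhod condition, and then run the coercivity/Young/H\"older estimates against $1_{\{(u-k)^{\pm}>0\}}$ to land in the De Giorgi class, with the $\zeta$-derivative terms absorbed into the $1+\|\partial_s\zeta\|_{L^\infty}+\|\nabla\zeta\|^2_{L^\infty}$ factor and the $Lk$ shift of $g_0^k$ producing the $k^2$ term. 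The only difference is presentational: the paper refers to \cite[Proposition 5.6]{qiutang} for the final bookkeeping, which you instead spell out by adapting the $J_1$--$J_3$ estimates of Theorem \ref{Theorem_MP_GD}; your flagged concern about localizing the It\^o formula is resolved exactly as you suggest.
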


\begin{proof}
First we generalize the It\^o formula to a local case for the RBSPDE (\ref{equation1.1}).
For each cut-off function $\zeta$ on $Q_{\rho,\tau}$ with $(\rho,\tau)\in(0,\delta]\times(0,\delta^2]$, we can choose a sequence of smooth functions $\{\zeta _m\}\subset C_0^{\infty}(Q_{\rho,\tau})$ such that $\zeta_m$ and its gradients w.r.t. $s$ and $x$ converge uniformly to $\zeta$ and its gradient, respectively, as $m\rightarrow \infty$.

For $k \geq k_0^+$, Theorem \ref{Ito-u+} yields that
\begin{align*}
&\|(u-k)^+(t)\zeta_m(t)\|^2_{L^2(B_{\rho}(x_0))}+\int_t^{t_0}\|\zeta_m(s)v^k(s)\|^2_{L^2(B_{\rho}(x_0))}\,ds\\
=&-2\int_t^{t_0}\langle\zeta_m(s)\partial _s\zeta_m(s),|(u-k)^+(s)|^2\rangle_{B_{\rho}(x_0)}\,ds
+2\int_t^{t_0}\langle\zeta_m^2(s)(u-k)^+(s),f(s,u(s),\nabla u(s),v(s))\rangle_{B_{\rho}(x_0)}\,ds\\
&-2\int_t^{t_0}\langle\partial _j(\zeta_m^2(s)(u-k)^+(s)),a^{ij}(s)\partial_iu(s)+\sigma ^{jr}(s)v^{r}(s)+g^{j}(s,u(s),\nabla u(s),v(s))\rangle_{B_{\rho}(x_0)}\,ds\\
&-2\int_t^{t_0}\langle\zeta_m^2(s)(u-k)^+(s),v^{r,k}(s)\rangle_{B_{\rho}(x_0)}\,dW^r_s+2\int_t^{t_0}\int_{B_{\rho}(x_0)}(u-k)^+(s,x)\zeta^2_m\,\mu(ds,dx),
\end{align*}
where $v^{r,k}:=v^r1_{\{u>k\}}$.

Thus, by letting $m\rightarrow \infty$ and the dominated convergence theorem, we can get
\begin{align*}
&\|(u-k)^+(t)\zeta(t)\|^2_{L^2(B_{\rho}(x_0))}+\int_t^{t_0}\|\zeta(s)v^k(s)\|^2_{L^2(B_{\rho}(x_0))}\,ds\nonumber\\
&=-2\int_t^{t_0}\langle\zeta(s)\partial _s\zeta(s),|(u-k)^+(s)|^2\rangle_{B_{\rho}(x_0)}\,ds
+2\int_t^{t_0}\langle\zeta^2(s)(u-k)^+(s),f(s,u(s),\nabla u(s),v(s))\rangle_{B_{\rho}(x_0)}ds\nonumber\\
&-2\int_t^{t_0}\langle\partial _j(\zeta^2(s)(u-k)^+(s)),a^{ij}(s)\partial_iu(s)+\sigma ^{jr}(s)v^{r}(s)
+g^{j}(s,u(s),\nabla u(s),v(s))\rangle_{B_{\rho}(x_0)}\,ds\nonumber\\
&-2\int_t^{t_0}\langle\zeta^2(s)(u-k)^+(s),v^{r,k}(s)\rangle_{B_{\rho}(x_0)}\,dW^r_s
+2\int_t^{t_0}\int_{B_{\rho}(x_0)}(u-k)^+(s,x)\zeta^2\,\mu(ds,dx).
\end{align*}
Taking conditional expectation, we obtain
\begin{align}\label{startingpoint_local}
&\|((u-k)\zeta)^+(t)\|^2_{L^2(B_{\rho}(x_0))}+E\left[\int_t^{t_0}\|\zeta(s)v^k(s)\|^2_{L^2(B_{\rho}(x_0))}\,ds|\mathcal{F}_t\right]\nonumber\\
&=-2E\left[\int_t^{t_0}\langle\zeta(s)\partial _s\zeta(s),|(u-k)^+(s)|^2\rangle_{B_{\rho}(x_0)}\,ds|\mathcal{F}_t\right]\nonumber\\
&\quad+2E\left[\int_t^{t_0}\langle\zeta^2(s)(u-k)^+(s),f^k(s,(u(s)-k)^+,\nabla (u(s)-k)^+,v(s))\rangle_{B_{\rho}(x_0)}\,ds|\mathcal{F}_t\right]\nonumber\\
&\quad-2E\left[\int_t^{t_0}\langle\partial _j(\zeta^2(s)(u-k)^+(s)),a^{ji}(s)\partial_iu(s)+\sigma ^{jr}(s)v^{r}(s)\right.\nonumber\\
&\quad\quad\quad\quad\quad\quad +g^{j,k}(s,(u(s)-k)^+,\nabla (u(s)-k)^+,v(s))\rangle_{B_{\rho}(x_0)}\,ds|\mathcal{F}_t\bigg]\nonumber\\
&\quad +2E\left[\int_t^{t_0}\int_{B_{\rho}(x_0)}(u-k)^+(s,x)\zeta^2\,\mu(ds,dx)|\mathcal{F}_t\right],
\end{align}
where $f^k(\cdot,\cdot,\cdot,X,Y,Z):=f(\cdot,\cdot,\cdot,X+k,Y,Z)$ and $g^{j,k}(\cdot,\cdot,\cdot,X,Y,Z):=g^j(\cdot,\cdot,\cdot,X+k,Y,Z)$. As $k \geq k_0^+$, the last term on the right hand side of \eqref{startingpoint_local} vanishes. Hence, starting from (\ref{startingpoint_local}), we derive the desired result in a similar way to \cite[Proposition 5.6]{qiutang}.
\end{proof}


Given $Q_{2\rho}:=[t_0-4\rho ^2,t_0)\times B_{2\rho}(x_0)\subset Q$ with $\rho \in (0,1)$,
 let $k_0^{\pm}=\textrm{esssup}_{\Omega\times Q_{\rho}}\hat{\xi}^{\pm}$. Lemma \ref{RBSPDE-DG} shows that $u\in BSPDG^{\pm}(a^{\pm}_0,b_0,k_0^{\pm},\eta;\rho,Q_{\rho})$ with $\eta=p$. $a^{\pm}_0=A_p(f_0^{\pm},g_0;Q_{\rho})$ and $b_0$ given therein. On the other hand,
in view of the local boundedness of weak solutions for BSPDEs (\cite[Proposition 5.6 and Theorem 5.8]{qiutang}), we have
\begin{align*}
k_0^{\pm}
&\leq C \left\{\rho^{-\frac{n+2}{2}}\|\hat{\xi}^{\pm}\|_{0,2;Q_{2\rho}}+A_p(\hat{f}^{\pm},\hat{g};Q_{2\rho})\rho ^{1-\frac{2+n}{p}}\right\}
\end{align*}
with $C$ depending on $\lambda$, $\kappa$, $\varrho$, $\Lambda$, $n$ and $p$. Hence, further by Lemmas \ref{lem-DG} and \ref{RBSPDE-DG}, we obtain finally the local behavior of weak solutions to the RBSPDE (1.1).

\begin{theorem}\label{MP-RBSPDE-Local}
Let assumptions $(\mathcal{A}_1)$-$(\mathcal{A}_4)$ hold. Let $(u,v,\mu)$ be a weak solution to the RBSPDE (1.1). Given $Q_{2\rho}:=[t_0-4\rho ^2,t_0)\times B_{2\rho}(x_0)\subset Q$ with $\rho \in (0,1)$, we have
\begin{equation*}
\begin{split}
\textrm{esssup}_{(\omega,s,x)\in \Omega\times Q_{\frac{\rho}{2}}}u^{\pm}
\leq &\, C \left\{\rho^{-\frac{n+2}{2}}(\|u^{\pm}\|_{0,2;Q_{\rho}}+\|\hat{\xi}^{\pm}\|_{0,2;Q_{2\rho}})\right.\\
&\left.+\left(A_p(f_0^{\pm},g_0;Q_{\rho})+A_p(\hat{f}^{\pm},\hat{g};Q_{2\rho})\right)\rho ^{1-\frac{2+n}{p}}\right\},
\end{split}
\end{equation*}
where $C$ is a positive constant depending on $\lambda$, $\kappa$, $\beta$, $\varrho$, $\Lambda$, $L$, $n$ and $p$.
\end{theorem}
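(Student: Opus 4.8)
The plan is to obtain the estimate by combining the three localisation results already in hand, with no fresh \textit{a priori} bound required: the local It\^o formula and De Giorgi-class membership established in Lemma \ref{RBSPDE-DG}, the De Giorgi iteration of Lemma \ref{lem-DG}, and the local boundedness of weak solutions to BSPDEs from \cite[Proposition 5.6 and Theorem 5.8]{qiutang}. All of the analytic work is absorbed into those statements; in particular the obstacle term causes no trouble because, once we iterate at levels $k\ge k_0^{\pm}\ge\hat{\xi}^{\pm}$, the Skorokhod condition together with $\xi\le\hat{\xi}$ forces the localised reflection integral $\int\zeta^{2}(u-k)^{+}\,\mu$ to vanish, as recorded in the proof of Lemma \ref{RBSPDE-DG}.

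First I would fix the inner cylinder and put $k_0^{\pm}:=\textrm{esssup}_{\Omega\times Q_{\rho}}\hat{\xi}^{\pm}$. Since $\rho\in(0,1)$ and $Q_{\rho}\subset Q$, Lemma \ref{RBSPDE-DG}, applied with $\delta=\rho$ on the cylinder $Q_{\rho}$, places $u$ in the class
\[
u\in BSPDG^{\pm}\big(A_p(f_0^{\pm},g_0;Q_{\rho}),\,b_0,\,k_0^{\pm},\,p;\,\rho,\,Q_{\rho}\big),
\]
with $b_0$ depending only on $\lambda,\kappa,\beta,\varrho,\Lambda,L,n,p$. Feeding this into Lemma \ref{lem-DG} (taking the lemma's parameter $\delta$ and its cylinder radius both equal to $\rho$, with $\eta=p$) yields
\[
\textrm{esssup}_{(\omega,s,x)\in\Omega\times Q_{\rho/2}}u^{\pm}
\le 2k_0^{\pm}+C_{\pm}\left\{\rho^{-\frac{n+2}{2}}\|u^{\pm}\|_{0,2;Q_{\rho}}+A_p(f_0^{\pm},g_0;Q_{\rho})\,\rho^{1-\frac{2+n}{p}}\right\}.
\]
It then remains only to control $k_0^{\pm}$. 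Because $\hat{\xi}$ solves the \emph{unobstructed} linear BSPDE \eqref{xi-hat} with coefficients $\hat{f},\hat{g},\hat{G}$ enjoying the integrability postulated in $(\mathcal{A}_4)$, the local boundedness result for BSPDEs of \cite[Proposition 5.6 and Theorem 5.8]{qiutang}, applied on $Q_{2\rho}\subset Q$, gives
\[
k_0^{\pm}=\textrm{esssup}_{\Omega\times Q_{\rho}}\hat{\xi}^{\pm}
\le C\left\{\rho^{-\frac{n+2}{2}}\|\hat{\xi}^{\pm}\|_{0,2;Q_{2\rho}}+A_p(\hat{f}^{\pm},\hat{g};Q_{2\rho})\,\rho^{1-\frac{2+n}{p}}\right\}.
\]

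Substituting this bound on $k_0^{\pm}$ into the preceding display and absorbing the finitely many constants, all of which depend only on $\lambda,\kappa,\beta,\varrho,\Lambda,L,n,p$, reproduces the asserted inequality; the estimate for $u^{-}$ follows by the identical argument applied to $(u-k)^{-}$ and $\hat{\xi}^{-}$. I do not anticipate a genuine difficulty: the theorem is a synthesis. The only point deserving care is bookkeeping — checking that the admissible ranges in the definition of $BSPDG^{\pm}$ (the levels $k\ge k_0^{\pm}$ and the radii $\rho'\le\delta$, $\tau'\le\delta^{2}$ with $\delta=\rho$) are exactly what Lemma \ref{lem-DG} consumes, and that the constant $C_{\pm}$ it produces can be arranged to depend only on the structural constants (through $b_0$ and $n$); should that constant appear to depend on the data through $a_0^{\pm}$, one first normalises the relation \eqref{relatn-DG} by $a_0^{\pm}+k_0^{\pm}$ before iterating, exactly as in \cite{qiutang}.
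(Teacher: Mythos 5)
Your proposal is correct and follows essentially the same route as the paper: fix $k_0^{\pm}=\textrm{esssup}_{\Omega\times Q_{\rho}}\hat{\xi}^{\pm}$, invoke Lemma \ref{RBSPDE-DG} to place $u$ in the De Giorgi class on $Q_{\rho}$, apply Lemma \ref{lem-DG}, and then bound $k_0^{\pm}$ via the local boundedness result of \cite{qiutang} applied to the BSPDE \eqref{xi-hat} on $Q_{2\rho}$. Your closing remark about normalising \eqref{relatn-DG} so that the iteration constant depends only on structural quantities is a reasonable way to handle the bookkeeping the paper leaves implicit.
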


\begin{appendix}
\section{Auxiliary lemmas and It\^o's Formulas}

This subsection states some useful lemmas and It\^o formulas, which have been frequently used. The first lemma and corollary are from \cite{chen}.

\begin{lemma}\label{lemma4.1}
Let $\{a_k,k\in\mathbb{N}\}$ be a sequence of nonnegative numbers satisfying
$$a_{k+1}\leq C_0b^ka_k^{1+\delta},$$
where $b>1$, $\delta >0$ and $C_0$ is a positive constant. Then, if $a_0\leq \theta _0:=C_0^{-\frac{1}{\delta}}b^{-\frac{1}{\delta^2}}$, we have $\lim_{k\rightarrow \infty}a_k=0$.
\end{lemma}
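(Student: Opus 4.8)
The plan is to upgrade the qualitative claim $a_k\to 0$ to the explicit geometric-type decay
\[
a_k\le \theta_0\, b^{-k/\delta}\qquad\text{for all }k\ge 0,
\]
and then simply invoke $b>1$ to conclude $\lim_{k\to\infty}a_k=0$. I would prove this bound by induction on $k$. The base case $k=0$ is precisely the standing hypothesis $a_0\le\theta_0$. For the inductive step, suppose $a_k\le\theta_0 b^{-k/\delta}$; feeding this into the recursion $a_{k+1}\le C_0b^ka_k^{1+\delta}$ and recalling $\theta_0=C_0^{-1/\delta}b^{-1/\delta^2}$ gives
\[
a_{k+1}\le C_0\, b^{k}\,\theta_0^{1+\delta}\, b^{-k(1+\delta)/\delta}.
\]

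The heart of the argument is then just exponent bookkeeping. On the one hand $b^{k}\cdot b^{-k(1+\delta)/\delta}=b^{k(\delta-1-\delta)/\delta}=b^{-k/\delta}$; on the other hand, since $1-\tfrac{1+\delta}{\delta}=-\tfrac1\delta$ and $\tfrac{1+\delta}{\delta^2}=\tfrac1{\delta^2}+\tfrac1\delta$,
\[
C_0\,\theta_0^{1+\delta}=C_0^{\,1-(1+\delta)/\delta}\,b^{-(1+\delta)/\delta^2}=C_0^{-1/\delta}\,b^{-1/\delta^2}\,b^{-1/\delta}=\theta_0\,b^{-1/\delta}.
\]
Combining the two displays yields $a_{k+1}\le\theta_0\,b^{-1/\delta}\cdot b^{-k/\delta}=\theta_0\,b^{-(k+1)/\delta}$, which closes the induction, and letting $k\to\infty$ finishes the proof.

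There is no genuine analytic obstacle here; the only point worth isolating is \emph{why} the threshold $\theta_0$ has exactly this form, namely that the ansatz $a_k\le\theta_0 b^{-k/\delta}$ is self-reproducing under the recursion precisely for this choice of $\theta_0$. A cleaner way to package the same computation, which I might prefer in the write-up, is to rescale: setting $x_k:=C_0^{1/\delta}b^{1/\delta^2}b^{k/\delta}a_k$, one checks that the recursion becomes simply $x_{k+1}\le x_k^{1+\delta}$ while the hypothesis $a_0\le\theta_0$ reads $x_0\le 1$. Hence $x_k\le x_0^{(1+\delta)^k}\le 1$ for all $k$, so $a_k\le C_0^{-1/\delta}b^{-1/\delta^2}\,b^{-k/\delta}\to 0$ (and in fact $x_k\to 0$ outright when $x_0<1$). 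Either presentation is only a few lines; I would choose whichever keeps the constant-chasing to a minimum.
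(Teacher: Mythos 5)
Your proof is correct: the induction $a_k\le\theta_0 b^{-k/\delta}$ (equivalently, the rescaled form $x_{k+1}\le x_k^{1+\delta}$ with $x_0\le 1$) is exactly the classical argument for this De Giorgi iteration lemma, which the paper itself does not prove but simply quotes from the reference \cite{chen}. Nothing further is needed.
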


\begin{corollary}\label{corollary-de-giorgi}
Let $\phi :[r_0,\infty]\rightarrow\mathbb{R}^+$ be a nonnegative and decreasing function. Assume there exist constants $C_1>0$, $\alpha >0$ and $\varsigma >1$ such that for any $r_0<r<l$,
$$\phi(l)\leq \frac{C_1}{(l-r)^{\alpha}}\phi (r)^{\varsigma}.$$
Then for any $d$ satisfying
$$d\geq C_1^{\frac{1}{\alpha}}|\phi(r_0)|^{\frac{\varsigma-1}{\alpha}}2^{\frac{\varsigma}{\varsigma-1}},$$
we have $\phi(r_0+d)=0$.
\end{corollary}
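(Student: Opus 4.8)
The plan is to reduce the statement to the abstract iteration estimate of Lemma \ref{lemma4.1} by running a geometric sequence of truncation levels that converges to $r_0+d$. First I would introduce
\[
	r_k := r_0 + d\left(1 - 2^{-k}\right),\qquad a_k := \phi(r_k),\qquad k\in\mathbb{N},
\]
so that $r_0$ is recovered at $k=0$, the levels increase strictly to $r_0+d$, and the successive gaps are $r_{k+1}-r_k = d\,2^{-(k+1)}$. Applying the assumed decay inequality with $r=r_k$ and $l=r_{k+1}$ gives
\[
	a_{k+1} \le \frac{C_1}{\left(d\,2^{-(k+1)}\right)^{\alpha}}\,a_k^{\varsigma}
	= \left(C_1\,2^{\alpha}\,d^{-\alpha}\right)2^{\alpha k}\,a_k^{1+(\varsigma-1)},
\]
which is exactly the recursion $a_{k+1}\le C_0 b^{k}a_k^{1+\delta}$ of Lemma \ref{lemma4.1} with $C_0 := C_1\,2^{\alpha}d^{-\alpha}$, $b := 2^{\alpha}>1$ and $\delta := \varsigma-1>0$.

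Next I would verify that the hypothesis on $d$ is precisely the smallness condition $a_0 \le \theta_0 := C_0^{-1/\delta}b^{-1/\delta^2}$ required to launch Lemma \ref{lemma4.1}. Substituting $C_0$ and $b$, the requirement $\phi(r_0)\le\theta_0$ reads $\phi(r_0)\le (C_1 2^{\alpha})^{-1/\delta}2^{-\alpha/\delta^2}d^{\alpha/\delta}$; solving for $d$ and using the identity $1+\tfrac{1}{\varsigma-1}=\tfrac{\varsigma}{\varsigma-1}$ turns this into
\[
	d \ge C_1^{1/\alpha}\,\phi(r_0)^{(\varsigma-1)/\alpha}\,2^{\varsigma/(\varsigma-1)},
\]
which is the stated lower bound. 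Hence Lemma \ref{lemma4.1} applies and $a_k=\phi(r_k)\to 0$. Since $\phi$ is nonnegative and decreasing and $r_k\uparrow r_0+d$, one has $0 \le \phi(r_0+d)\le \phi(r_k)=a_k$ for every $k$; letting $k\to\infty$ forces $\phi(r_0+d)=0$, as claimed.

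The only delicate point is the index $k=0$, where the recursion invokes the decay inequality at $r=r_0$, whereas the assumption is stated for $r_0<r<l$. I would dispose of this with a one-line monotonicity argument: for $r_0<r<l$ we have $\phi(r)\le\phi(r_0)$, so $\phi(l)\le C_1(l-r)^{-\alpha}\phi(r_0)^{\varsigma}$, and letting $r\downarrow r_0$ (the right-hand side decreasing to its infimum) yields $\phi(l)\le C_1(l-r_0)^{-\alpha}\phi(r_0)^{\varsigma}$, i.e.\ the inequality extends to $r=r_0$. There is no substantial obstacle in this corollary; the entire content is the identification of the truncation recursion with Lemma \ref{lemma4.1} and the exponent bookkeeping that matches the constant $2^{\varsigma/(\varsigma-1)}$.
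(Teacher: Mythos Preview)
The paper does not supply its own proof of this corollary; both Lemma~\ref{lemma4.1} and Corollary~\ref{corollary-de-giorgi} are quoted from Chen's textbook (see the sentence ``The first lemma and corollary are from \cite{chen}'' preceding Lemma~\ref{lemma4.1}). Your argument is the standard derivation and is correct: the geometric truncation levels $r_k=r_0+d(1-2^{-k})$ reduce the hypothesis to the recursion of Lemma~\ref{lemma4.1}, the exponent bookkeeping matching $2^{\varsigma/(\varsigma-1)}$ is accurate, and your monotonicity remark cleanly handles the endpoint $r=r_0$.
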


The following embedding lemma is from \cite{qiutang}.
\begin{lemma}\label{lemma2.6}
If for each $t\in[0,T]$, $u\in\mathcal{V}_2(\mathcal{O}_t)$, then we have
$$\|u\|_{0,\frac{2(n+2)}{n};\mathcal{O}_t}\leq C\|\nabla u\|^{\frac{n}{n+2}}_{0,2;\mathcal{O}_t}\textrm{esssup}_{(\omega,s)\in\Omega\times[t,T]}\|u(\omega,s)\|^{\frac{n}{n+2}}\leq C\|u\|_{\mathcal{V}_2(\mathcal{O}_t)},$$
where $C$ only depends on $n$.
\end{lemma}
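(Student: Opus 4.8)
The plan is to read this as the stochastic parabolic avatar of the Ladyzhenskaya--Gagliardo--Nirenberg interpolation inequality and to reduce it to its classical spatial counterpart applied pathwise. The point is that all three norms involved, $\|\cdot\|_{0,\frac{2(n+2)}{n};\mathcal{O}_t}$, $\|\cdot\|_{0,2;\mathcal{O}_t}$ and $\|\cdot\|_{\infty,2;\mathcal{O}_t}$, are obtained from the spatial $L^q(\mathcal{O})$ and $L^2(\mathcal{O})$ norms by monotone operations (integration in $\tau$, conditional expectation $E[\,\cdot\mid\mathcal{F}_s]$, and essential suprema in $\omega$ and $s$); hence a pointwise-in-$(\omega,\tau)$ spatial estimate lifts to the stochastic norms verbatim.

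\textbf{Step 1 (pathwise spatial interpolation).} Fix $q:=\frac{2(n+2)}{n}$ and a.e.\ $(\omega,\tau)\in\Omega\times[t,T]$, and apply the Gagliardo--Nirenberg--Sobolev inequality to $u(\omega,\tau,\cdot)$ with interpolation parameter $\theta$ fixed by $\frac1q=\frac12-\frac\theta n$, i.e.\ $\theta=\frac{n}{n+2}$:
\[
\|u(\omega,\tau,\cdot)\|_{L^q(\mathcal{O})}\leq C(n)\,\|\nabla u(\omega,\tau,\cdot)\|_{L^2(\mathcal{O})}^{\frac{n}{n+2}}\,\|u(\omega,\tau,\cdot)\|_{L^2(\mathcal{O})}^{\frac{2}{n+2}}.
\]
Since $q\theta=2$ and $q(1-\theta)=\frac4n$, raising to the $q$-th power yields the form
\[
\|u(\omega,\tau,\cdot)\|_{L^q(\mathcal{O})}^{q}\leq C(n)\,\|\nabla u(\omega,\tau,\cdot)\|_{L^2(\mathcal{O})}^{2}\,\|u(\omega,\tau,\cdot)\|_{L^2(\mathcal{O})}^{\frac4n}.
\]
To keep the constant depending only on $n$ and not on the possibly unbounded $\mathcal{O}$, I would extend $u(\omega,\tau,\cdot)$ by zero to $\mathbb{R}^n$ --- legitimate because the functions to which the lemma is applied lie in $H^1_0(\mathcal{O})$ --- and use the scale-invariant inequality on $\mathbb{R}^n$, valid here since $q<\frac{2n}{n-2}$ is subcritical. \emph{This domain-uniformity is the only delicate point}: a direct Sobolev embedding on $\mathcal{O}$ would carry a constant depending on the geometry of $\mathcal{O}$.

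\textbf{Step 2 (lifting to the stochastic norms).} Bounding the last factor by its essential supremum, $\|u(\omega,\tau,\cdot)\|_{L^2(\mathcal{O})}^{4/n}\leq\|u\|_{\infty,2;\mathcal{O}_t}^{4/n}$ for a.e.\ $(\omega,\tau)$, gives
\[
\|u(\omega,\tau,\cdot)\|_{L^q(\mathcal{O})}^{q}\leq C(n)\,\|u\|_{\infty,2;\mathcal{O}_t}^{\frac4n}\,\|\nabla u(\omega,\tau,\cdot)\|_{L^2(\mathcal{O})}^{2}.
\]
Applying $E\big[\int_s^T(\cdot)\,d\tau\mid\mathcal{F}_s\big]$ and then $\textrm{esssup}_{\omega}\sup_{s}$ --- each a monotone operation --- turns the left side into $\|u\|_{0,q;\mathcal{O}_t}^{q}$ and the right side into $C(n)\|u\|_{\infty,2;\mathcal{O}_t}^{4/n}\|\nabla u\|_{0,2;\mathcal{O}_t}^{2}$, straight from the definitions of the $\mathcal{M}^{0,q}$ and $\mathcal{M}^{0,2}$ norms. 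Taking the $q$-th root and reading off $\frac2q=\frac{n}{n+2}$, $\frac{4/n}{q}=\frac{2}{n+2}$ produces the first asserted estimate, with the gradient carrying exponent $\frac{n}{n+2}$ and the supremum the complementary exponent so that the two degrees sum to one.

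\textbf{Step 3 (the combined bound).} The second inequality is elementary: by the definition $\|u\|_{\mathcal{V}_2(\mathcal{O}_t)}=(\|u\|_{\infty,2;\mathcal{O}_t}^2+\|\nabla u\|_{0,2;\mathcal{O}_t}^2)^{1/2}$ one has both $\|\nabla u\|_{0,2;\mathcal{O}_t}\leq\|u\|_{\mathcal{V}_2(\mathcal{O}_t)}$ and $\|u\|_{\infty,2;\mathcal{O}_t}\leq\|u\|_{\mathcal{V}_2(\mathcal{O}_t)}$, and multiplying these raised to the two complementary exponents (which sum to $1$) bounds the middle expression by $C\|u\|_{\mathcal{V}_2(\mathcal{O}_t)}$. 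The whole argument hinges on Step 1; once the zero-extension secures a dimension-only constant, Step 2 is routine since every operation used is monotone and commutes with the pathwise pointwise bound.
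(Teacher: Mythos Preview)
The paper does not prove this lemma; it simply cites it from Qiu--Tang \cite{qiutang}. Your argument---the pathwise Gagliardo--Nirenberg inequality on $\mathbb{R}^n$ followed by monotone lifting through $E[\,\cdot\mid\mathcal{F}_s]$ and the essential supremum---is precisely the standard route and is correct.

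Two remarks. First, you have implicitly corrected a typo in the displayed inequality: the exponent on $\textrm{esssup}_{(\omega,s)}\|u(\omega,s)\|$ must be $\tfrac{2}{n+2}$, not $\tfrac{n}{n+2}$, as homogeneity (and your derivation in Step~2) forces; the two exponents must sum to $1$ for the second inequality to make sense. Second, your caveat in Step~1 about needing $u(\omega,\tau,\cdot)\in H^1_0(\mathcal{O})$ for the zero-extension is well taken and is exactly how the lemma is used in the paper: it is applied only to truncations $(u-k)^+$ with $k$ chosen so that they vanish on $\partial\mathcal{O}$, so the dimension-only constant is legitimate in those applications.
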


 Now, we are going to present the It\^o formulas, which have been frequently used in the main text. We assume that $\Phi$ is a function that satisfies the following properties:
 \begin{itemize}
\item[(1)] $\Phi\in \mathcal{C}(\mathbb{R}^+\times\mathbb{R}^{n}\times\mathbb{R}\rightarrow \mathbb{R})$ and $\partial_t\Phi(t,x,u)$, $\Phi'(t,x,u)$, $\Phi''(t,x,u)$ and $\partial_j\Phi'(t,x,u)$, $j=1,2,\cdots,n$ exist and are continuous;
\item[(2)] $\Phi'(t,x,0)=0$ for any $(t,x)\in \mathbb{R}^+\times\mathbb{R}^n$;
\item[(3)] $\sup_{t\in\mathbb{R}^{+},x\in\mathbb{R}^n}|\partial_j\Phi'(t,x,u)|\leq C |u|$, $j=1,2,\cdots,n$;
\item[(4)]
\begin{align*}
\sup\limits_{t\in\mathbb{R}^{+},x\in\mathbb{R}^n,u\in\mathbb{R}/\{0\}} \left\{|\Phi''(t,x,u)|+\frac{1}{|u|^2}|\partial_t\Phi(t,x,u)-\partial_t\Phi(t,x,0)|\right\}<\infty,
\end{align*}
where $\partial_j\Phi(t,x,u)=\partial_{x_j}\Phi(t,x,u)$, $\Phi'(t,x,u)=\partial_u\Phi(t,x,u)$ and $\Phi''(t,x,u)=\partial^2_u\Phi(t,x,u)$.
\end{itemize}
Suppose that the following BSPDE
\begin{equation}\label{appeneq}
\left\{\begin{array}{ll}
\begin{split}
-du(t,x)&=[\partial _j(a^{ij}\partial _iu(t,x)+\sigma ^{jr}v^r(t,x))
+\bar{f}(t,x)+\nabla \cdot \bar{g}(t,x)]\,dt \\
& +\mu (dt,x)-v^r(t,x)\,dW^r_t,
~~~(t,x)\in Q,\\
u(T,x)&=G(x),~~~x\in \mathcal{O},\\
\end{split}
\end{array}
\right.
\end{equation}
holds in the weak sense where $(u,v)\in\mathcal{V}_2(Q)\times\mathcal{M}^{0,2}(Q)$, $\mu$ is a stochastic regular measure, $\bar{f}$, $\bar{g}$ and $G$ satisfy $(\mathcal{A}_3)$, $a$ and $\sigma$ satisfy $(\mathcal{A}_2)$.

When $\Phi$ is independent of $x$, i.e., $\Phi(t,x,u)=\Phi(t,u)$, the first It\^o formula is from \cite[Theorem 3.10]{qiuwei}.

\begin{proposition}\label{proposition6.1}
Let BSPDE (\ref{appeneq}) hold in the weak sense with $u|_{\partial \mathcal{O}}=0$. Then there holds almost surely that
\begin{align*}
&\int_{\mathcal{O}}\Phi(t,u(t,x))\,dx+\frac{1}{2}\int_t^T\langle\Phi''(s,u(s)),|v(s)|^2\rangle \,ds\\
=& \int_{\mathcal{O}}\Phi(T,G(x))\,dx-\int_t^T\int_{\mathcal{O}}\partial_s\Phi(s,u(s,x))\,dxds+\int_t^T\langle \Phi'(s,u(s)),\bar{f}(s)\rangle \,ds\\
&-\int_t^T\langle \Phi''(s,u(s))\partial_{j}u(s),a^{ij}(s)\partial_{i}u(s)+\sigma^{jr}(s)v^r(s)+\bar{g}^j(s)\rangle \,ds\\
&+\int_{[t,T]\times\mathcal{O}}\Phi'(s,u(s,x))\,\mu(ds,dx)-\int_t^T\langle\Phi'(s,u(s)),v^r(s)\rangle \,dW_s^r.
\end{align*}
\end{proposition}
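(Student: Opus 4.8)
Since the formula is quoted from \cite[Theorem 3.10]{qiuwei}, the plan is to reconstruct that argument: reduce, by spatial mollification, to the \emph{classical} finite-dimensional It\^o formula applied for each fixed $x$, then integrate the resulting identity over $\mathcal{O}$, integrating by parts in the divergence-form drift terms, and finally pass to the limit in the mollification parameter. The key structural point is that the hypothesis ``$\Phi$ is independent of $x$'' forces $\partial_{x_j}\big[\Phi'(s,u_\epsilon(s,\cdot))\big]=\Phi''(s,u_\epsilon)\,\partial_j u_\epsilon$, which is exactly what produces the term $-\int_t^T\langle\Phi''(s,u(s))\partial_j u(s),\,a^{ij}\partial_i u+\sigma^{jr}v^r+\bar g^j\rangle\,ds$ in the statement. (If $\Phi(t,\cdot,0)\not\equiv 0$ one first subtracts it, so that all spatial integrals over a possibly unbounded $\mathcal{O}$ converge; this changes no term in the formula.)

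\textbf{Regularization and the classical It\^o step.} Extending $u,v,G,\bar f,\bar g$ by zero outside $\mathcal{O}$ (legitimate because $u|_{\partial\mathcal{O}}=0$) and convolving in $x$ with a standard mollifier $\rho_\epsilon$, one tests the weak formulation of \eqref{appeneq} against $\rho_\epsilon(x-\cdot)$ to obtain, for a.e.\ $x$, that $u_\epsilon(\cdot,x):=(u*\rho_\epsilon)(\cdot,x)$ is a continuous backward semimartingale
\[
 u_\epsilon(t,x)=G_\epsilon(x)+\int_t^T F_\epsilon(s,x)\,ds+\int_t^T\mu_\epsilon(ds,x)-\int_t^T v_\epsilon^r(s,x)\,dW_s^r,
\]
where $F_\epsilon$ collects the mollified drift terms, $\mu_\epsilon(ds,x):=\int_{\mathcal O}\rho_\epsilon(x-y)\,\mu(ds,dy)$ is of finite variation in $s$, and the martingale part has quadratic variation $|v_\epsilon(s,x)|^2\,ds$. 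Applying the classical It\^o formula to $\Phi(t,u_\epsilon(t,x))$ (valid by the regularity hypotheses (1) on $\Phi$), integrating over $x\in\mathcal O$, invoking Fubini, and integrating by parts in the divergence-form contributions of $\langle\Phi'(s,u_\epsilon),F_\epsilon\rangle$ yields the asserted identity with $u,v,\mu,\bar f,\bar g,G$ replaced by $u_\epsilon,v_\epsilon,\mu_\epsilon,\bar f_\epsilon,\bar g_\epsilon,G_\epsilon$.

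\textbf{Passage to the limit $\epsilon\to0$.} The growth conditions (2)--(4) on $\Phi$ --- in particular $\Phi'(s,0)=0$ together with $|\Phi''|\le C$, which gives $|\Phi'(s,u)|\le C|u|$, and the quadratic control on $\partial_t\Phi(t,u)-\partial_t\Phi(t,0)$ --- combined with $u\in\mathcal{V}_2(Q)$, $v\in\mathcal{M}^{0,2}(Q)$ and $G\in L^\infty(\Omega,L^2(\mathcal O))$, provide the uniform integrability needed for dominated convergence in the Lebesgue-integral terms; here one uses $u_\epsilon\to u$, $\nabla u_\epsilon\to\nabla u$, $v_\epsilon\to v$ in $L^2(\Omega\times Q)$ and a.e.\ along a subsequence, plus continuity of $\Phi',\Phi''$. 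The quadratic-variation term $\tfrac12\langle\Phi''(s,u_\epsilon),|v_\epsilon|^2\rangle$ and the stochastic integral converge by the same estimates together with the Burkholder--Davis--Gundy inequality, and the terminal term converges since $G_\epsilon\to G$ in $L^2(\mathcal O)$ a.s.

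\textbf{The main obstacle.} The delicate point is the measure term $\int_{[t,T]\times\mathcal O}\Phi'(s,u(s,x))\,\mu(ds,dx)$: because $\mu$ is only a \emph{stochastic regular measure} --- generically singular with respect to $ds\times dx$, e.g.\ a local time on $\{u=\xi\}$ --- the integrand must be taken at the quasi-continuous version $\tilde u$ of $u$, and convergence of $\int\Phi'(s,u_\epsilon)\,\mu_\epsilon$ to $\int\Phi'(s,\tilde u)\,\mu$ requires (i) that $\mu$ charges no set of zero parabolic capacity, and (ii) that $u_\epsilon\to\tilde u$ quasi-uniformly along a subsequence, so that $\Phi'(s,u_\epsilon)\to\Phi'(s,\tilde u)$ $\mu$-a.e., together with the weak convergence $\mu_\epsilon\rightharpoonup\mu$ and a uniform bound on total mass. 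These capacity-theoretic facts are precisely the parabolic-potential-theory input of \cite{pierre,pierre1980,qiuwei}; granting them, the passage to the limit in the measure term goes through and the proof is complete.
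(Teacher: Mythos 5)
Your route (zero-extension, spatial mollification, pointwise classical It\^o formula, then $\epsilon\to0$) is genuinely different from the one the paper relies on: the paper gives no proof of Proposition \ref{proposition6.1} but quotes \cite[Theorem 3.10]{qiuwei}, whose scheme is reproduced here in the proof of Theorem \ref{Ito-u+}. That scheme never mollifies: it writes $u=\hat u-\check u$ with $\check u$ the stochastic regular potential of $\mu$, approximates $\check u$ by $\check u^n$ solving SPDEs with nonnegative drifts $f^n\in\mathcal{L}^2([0,T];(H^{-1})^+(\mathcal{O}))$, applies the measure-free It\^o formula of \cite{qiutang} to $u^n=\hat u-\check u^n$, and passes to the limit using the quantitative facts of \cite[Prop.\ 3.9]{qiuwei} (domination $|\check u^n-\check u|\le\phi^n\downarrow0$, $\|\check u^n-\check u\|_{\mathcal{L}^2(\mathcal{K})}\to0$, and above all the duality convergence $\int_t^T\langle\Phi'(u^n),f^n\rangle_{1,-1}\,ds\to\int\Phi'(\tilde u)\,\mu(ds,dx)$).

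The genuine gap in your argument sits exactly at what you call the main obstacle, and it is not closed by the references you invoke. To pass to the limit in $\int\Phi'(s,u_\epsilon)\,\mu_\epsilon(ds,dx)$ you need (i) quasi-uniform (or at least quasi-everywhere, uniformly capacitary) convergence of the spatial mollifications $u_\epsilon$ to the quasi-continuous version $\tilde u$, and (ii) a uniform-in-$\epsilon$ statement that the mollified measures $\mu_\epsilon$ put uniformly small mass on sets of small parabolic capacity, so that weak convergence $\mu_\epsilon\rightharpoonup\mu$ upgrades to convergence of the pairings. Neither fact is provided by \cite{pierre,pierre1980,qiuwei}: what that potential theory supplies is precisely the alternative penalization-type approximation described above, not continuity of mollification in the capacity sense nor equi-regularity of $\mu*\rho_\epsilon$. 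Writing ``granting them, the passage to the limit goes through'' therefore defers the only hard step of the proof, the step the cited machinery was built to handle by a different construction. A secondary, fixable but unaddressed, problem is the regularization itself on a domain: the weak formulation holds only against test functions compactly supported in $\mathcal{O}$, and the zero extension of the flux $a^{ij}\partial_iu+\sigma^{jr}v^r+\bar g^j$ acquires a normal-trace layer on $\partial\mathcal{O}$ when its divergence is taken over $\mathbb{R}^n$; so for $x$ within distance $\epsilon$ of $\partial\mathcal{O}$ the claimed semimartingale decomposition of $u_\epsilon(\cdot,x)$ does not follow from $u|_{\partial\mathcal{O}}=0$ alone (that only puts the extension in $H^1(\mathbb{R}^n)$). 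One would need interior cutoffs and a vanishing boundary-layer estimate, which your sketch does not supply. As it stands the proposal is an outline of a plausible alternative strategy, not a proof.
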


The following It\^o formula extends the preceding one to the positive parts of the weak solutions to BSPDEs.

\begin{theorem}\label{Ito-u+}
Let BSPDE (\ref{appeneq}) hold in the weak sense but with $u^+|_{\partial \mathcal{O}}=0$. Then there holds almost surely that
\begin{equation}\label{equation6.5}
\begin{split}
        &\int_{\mathcal{O}}\Phi(t,x,u^+(t,x))\,dx+\frac{1}{2}\int_t^T\langle \Phi''(s,u^+(s)),|v^u(s)|^2\rangle \,ds\\
        =& \int_{\mathcal{O}}\Phi(T,x,G^+(x))\,dx-\int_t^T\int_{\mathcal{O}}\partial_s\Phi(s,x,u^+(s,x))\,dxds\\
        &+\int_t^T\langle \Phi'(s,u^+(s)),\bar{f}^u(s)\rangle\,ds+\int_t^T\int_{\mathcal{O}}\Phi'(s,x,u^+(s,x))\,\mu(dsdx)\\
        &-\int_t^T\langle \Phi''(s,u^+(s))\partial_j u^+(s)+\partial_j \Phi'(s,u^+(s)),a^{ij}(s)\partial_i u^+(s)+\sigma^{j,r}(s)v^{r,u}(s)+\bar{g}^{j,u}(s)\rangle \,ds\\
        &-\int_t^T\langle\Phi'(s,u^+(s)),v^{r,u}(s)\rangle \,dW^r_s,
\end{split}
\end{equation}
where
$$v^{r,u}=1_{\{u>0\}}v^r,~~~~\bar{f}^u=1_{\{u>0\}}\bar{f},~~~~\bar{g}^{j,u}=1_{\{u>0\}}\bar{g}^j.$$
\end{theorem}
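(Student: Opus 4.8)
The plan is to reduce the statement to the It\^o formula already established for weak solutions of BSPDEs (Proposition \ref{proposition6.1}) by a careful approximation of the positive part. The starting observation is that $u^+ = u\,1_{\{u>0\}}$ is not smooth, so I cannot apply Proposition \ref{proposition6.1} to $\Phi(t,x,u^+)$ directly; instead I would approximate $r\mapsto r^+$ by a sequence of convex $C^2$ functions $\psi_\eps$ with $\psi_\eps(r)\to r^+$, $\psi_\eps'(r)\to 1_{\{r>0\}}$, $\psi_\eps''\ge 0$ supported in a shrinking neighbourhood of $0$, and then consider the composite functions $\Phi_\eps(t,x,r):=\Phi(t,x,\psi_\eps(r))$. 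The first step is to verify that each $\Phi_\eps$ (or rather a suitably $x$-independent or $x$-dependent variant, depending on which version of Proposition \ref{proposition6.1} one appeals to) falls within the class of admissible test functions for the It\^o formula — this requires checking conditions (1)--(4) in the appendix: continuity and existence of $\partial_t\Phi_\eps$, $\Phi_\eps'$, $\Phi_\eps''$, $\partial_j\Phi_\eps'$; the normalization $\Phi_\eps'(t,x,0)=0$ (which holds since $\psi_\eps'(0)=0$ for an appropriate choice); the linear growth bound on $\partial_j\Phi_\eps'$; and the boundedness of $\Phi_\eps''$ together with the quotient condition on $\partial_t\Phi_\eps - \partial_t\Phi_\eps(\cdot,\cdot,0)$. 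By the chain rule, $\Phi_\eps''=\Phi''(\cdot,\cdot,\psi_\eps)(\psi_\eps')^2+\Phi'(\cdot,\cdot,\psi_\eps)\psi_\eps''$, and since $\psi_\eps''$ is supported near $0$ where $\Phi'$ vanishes to first order (because $\Phi'(\cdot,\cdot,0)=0$), this term stays bounded uniformly in $\eps$; the other verifications are routine.

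Having applied Proposition \ref{proposition6.1} to $\Phi_\eps$, I obtain for each $\eps>0$ an almost-sure identity of the same shape as \eqref{equation6.5} but with every occurrence of $u^+$, $G^+$, $v^{r,u}$, $\bar f^u$, $\bar g^{j,u}$ replaced by $\psi_\eps(u)$, $\psi_\eps(G)$, $\psi_\eps'(u)v^r$, $\psi_\eps'(u)\bar f$, $\psi_\eps'(u)\bar g^j$, and with the Hessian/gradient terms carrying the two pieces of $\Phi_\eps''$ and $\partial_j\Phi_\eps'$. The second and main step is to pass to the limit $\eps\to 0$ in every term. The deterministic drift, terminal, $\partial_s\Phi$, $\bar f$ and $\bar g$ terms are handled by dominated convergence using the a priori integrability $(u,v)\in\mathcal V_2(Q)\times\mathcal M^{0,2}(Q)$, the growth bounds on $\Phi$ and its derivatives, and $(\mathcal A_2)$--$(\mathcal A_3)$; the stochastic integral converges in $L^2$ (hence along a subsequence a.s.) by It\^o's isometry together with $\psi_\eps'(u)v^r\to 1_{\{u>0\}}v^r$ in $L^2(\Omega\times Q)$. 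The measure term $\int\Phi_\eps'(s,x,u)\,\mu(ds,dx)=\int\Phi'(s,x,\psi_\eps(u))\psi_\eps'(u)\,\mu(ds,dx)$ requires the quasi-continuous version $\tilde u$ of $u$ and the fact (from the theory of stochastic regular measures recalled in the appendix) that $\mu$ charges the relevant sets through $\tilde u$; since $\Phi'(s,x,\psi_\eps(\tilde u))\psi_\eps'(\tilde u)\to \Phi'(s,x,\tilde u^+)1_{\{\tilde u>0\}}=\Phi'(s,x,\tilde u^+)$ pointwise and is dominated, dominated convergence for the measure $\mu$ applies.

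The step I expect to be the main obstacle is the convergence of the critical Hessian term
\[
\int_t^T\langle \Phi_\eps''(s,u)\,\partial_j\psi_\eps(u)+\partial_j\Phi_\eps'(s,u),\;a^{ij}\partial_i\psi_\eps(u)+\sigma^{jr}v^r+\bar g^j\rangle\,ds.
\]
Here $\partial_i\psi_\eps(u)=\psi_\eps'(u)\partial_i u$, so the leading piece of the integrand is $\bigl[\Phi''(s,\psi_\eps(u))(\psi_\eps'(u))^2+\Phi'(s,\psi_\eps(u))\psi_\eps''(u)\bigr](\psi_\eps'(u)\partial_j u)(\cdots)$, and the delicate term is the one carrying $\psi_\eps''(u)$, which is supported on $\{0<u<\eps\}$ (or an $\eps$-neighbourhood of $0$). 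The standard device is to show this term vanishes in the limit: on $\{|u|\le\eps\}$ one has $|\Phi'(s,\psi_\eps(u))|\le C\eps$ while $\psi_\eps''$ is $O(1/\eps)$ and $\int_0^T\!\!\int_{\mathcal O}\psi_\eps''(u)|\nabla u|^2\,dx\,ds$ is controlled because $\nabla u\in L^2$ and $\nabla u=0$ a.e. on any level set of $u$ (in particular on $\{u=0\}$, and the contributions from $\{0<u<\eps\}$ shrink as $\eps\to 0$ by dominated convergence since $1_{\{0<u<\eps\}}|\nabla u|^2\to 1_{\{u=0\}}|\nabla u|^2=0$). The surviving piece converges to $\langle\Phi''(s,u^+)\partial_j u^+ + \partial_j\Phi'(s,u^+),\,a^{ij}\partial_i u^+ + \sigma^{jr}v^{r,u}+\bar g^{j,u}\rangle$ after using $\psi_\eps'(u)\partial_i u\to 1_{\{u>0\}}\partial_i u=\partial_i u^+$ in $L^2$ and the identity $\partial_j\Phi'(s,x,u^+)=1_{\{u>0\}}\partial_j\Phi'(s,x,u)$ coming from condition (2) on $\Phi$. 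Assembling the limits of all terms and identifying $\psi_\eps(u)\to u^+$, $\psi_\eps'(u)\to 1_{\{u>0\}}$ everywhere yields \eqref{equation6.5}.
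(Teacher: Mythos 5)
There is a genuine gap, and it is exactly the obstruction the theorem is designed to overcome: your whole reduction runs through Proposition \ref{proposition6.1}, but that proposition requires the full zero Dirichlet condition $u|_{\partial\mathcal{O}}=0$ on the solution itself, whereas Theorem \ref{Ito-u+} only assumes $u^{+}|_{\partial\mathcal{O}}=0$, and in general $u^{+}|_{\partial\mathcal{O}}=0$ does not imply $u|_{\partial\mathcal{O}}=0$. Applying Proposition \ref{proposition6.1} with the composite test functions $\Phi_{\eps}(t,x,r)=\Phi(t,x,\psi_{\eps}(r))$ does not help, because the boundary hypothesis in that proposition is on the solution $u$ of \eqref{appeneq}, not on $\Phi$; the heuristic that ``$\Phi_{\eps}'(t,x,u)$ is supported on $\{u>0\}$, so boundary terms should vanish'' is precisely the content that needs proof, i.e.\ an It\^o formula valid under the weaker boundary condition — which is the statement being established. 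A secondary but real mismatch: Proposition \ref{proposition6.1} is only stated for $x$-independent $\Phi$, while your composites $\Phi(t,x,\psi_{\eps}(r))$ are $x$-dependent whenever $\Phi$ is, so even under the stronger boundary condition you would be invoking a version of the formula (measure term plus $x$-dependence) that the paper does not provide. Your limiting arguments for $\eps\to0$ (the $\psi_{\eps}''$ term, the stochastic integral, the $\mu$-integral) are plausible in themselves, but they rest on an identity you are not entitled to at the start.

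The paper's route avoids this by approximating the measure rather than the positive part. It writes $u=\hat{u}-\check{u}$, where $\check{u}$ is the stochastic regular parabolic potential associated with $\mu$ and $\hat{u}$ solves an auxiliary (forward) SPDE with no measure; quasi-continuity results from \cite{qiuwei} then give $\check{u}^{n}$ solving SPDEs with nonnegative sources $f^{n}\in\mathcal{L}^2([0,T];(H^{-1})^+(\mathcal{O}))$, with $|\check{u}^n-\check{u}|\le\phi^n\downarrow0$. Setting $u^{n}:=\hat{u}-\check{u}^{n}$, one gets equations without a reflecting measure and with $(u^{n})^{+}|_{\partial\mathcal{O}}=0$, so the It\^o formula for positive parts of weak solutions of \emph{non-reflected} BSPDEs (\cite[Lemma 3.5]{qiutang}, which needs only the positive-part boundary condition and allows $x$-dependent $\Phi$) applies to $u^n$. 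The identity \eqref{equation6.5} is then obtained by letting $n\to\infty$, the $\mu$-integral emerging as the limit of $\int_t^T\langle\Phi'(s,(u^n)^+(s)),f^n(s)1_{\{u^n>0\}}\rangle_{1,-1}\,ds$ via quasi-continuity, with the stochastic integrals handled by Burkholder--Davis--Gundy and the remaining terms by domination through $\phi^n$ and $\bar u$. If you want to salvage your smoothing-of-$r^{+}$ idea, it belongs one level down — inside a proof of the positive-part It\^o formula for non-reflected equations under the boundary condition $u^+|_{\partial\mathcal{O}}=0$ — not as an application of Proposition \ref{proposition6.1}, and you would still need the potential-theoretic approximation of $\mu$ to recover the term $\int_t^T\int_{\mathcal{O}}\Phi'(s,x,u^+(s,x))\,\mu(ds\,dx)$.
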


\begin{proof}
Note that in general we cannot get $u|_{\partial\mathcal{O}}=0$ from $u^+|_{\partial\mathcal{O}}=0$, so Proposition \ref{proposition6.1} is not applicable. Insread, we shall apply an approximation scheme similar to that for \cite[Theorem 3.10]{qiuwei}

Let $\check{u}$ be the stochastic regular parabolic potential (see next subsection for the definition) associated with $\mu$. Now define
\begin{equation*}
\left\{\begin{array}{ll}
\begin{split}
-d\hat{u}(t,x)&=\left(-\Delta \hat{u}(t,x)+\bar{f}(t,x)+\nabla\cdot\hat{g}(t,x)\right)dt
-v^r(t,x)dW^r_t,\\
&~~~(t,x)\in Q,\\
\hat{u}(0,x)&=u(0,x),~~~x\in \mathcal{O},\\
\end{split}
\end{array}
\right.
\end{equation*}
where $\hat{g}^j(t,x)=\partial_j u(t,x)+a^{ij}\partial _iu(t,x)+\sigma ^{jr}v^r(t,x)+\bar{g}^j(t,x)$. Then, $u=\hat{u}-\check{u}$ and the zero Dirichlet conditions of $u^+$ and $\check{u}$ imply $\hat{u}^+|_{\partial\mathcal{O}}=0$. By \cite[Proposition 3.9(i)]{qiuwei} $u$ is almost surely quasi-continuous. So, the integral w.r.t. $\mu$ in (\ref{equation6.5}) is well defined. We can also check that all the other terms in (\ref{equation6.5}) are well defined.

  Thus, by Proposition 3.9(iv) and Remark 3.7 in \cite{qiuwei}, there exist $f^n\in \mathcal{L}^2([0,T];(H^{-1})^+(\mathcal{O}))$, $\check{v}^n\in \mathcal{L}^2([0,T];(L^2(\mathcal{O}))^m)$, $\check{u}^n\in \mathcal{U}(-\infty,f^{n}_1,g^{n}_1,G^n_1)$ and $\phi^n\in\mathcal{U}(-\infty,f^{n}_2,g^{n}_2,G^n_2)$, for some $f^{n}_i\in\mathcal{L}^2([0,T];L^2(\mathcal{O}))$, $g^{n}_i\in\mathcal{L}^2([0,T];(L^2(\mathcal{O}))^n)$, $G^n_i\in\mathcal{L}^2(\mathcal{O})$, $i=1,2$, such that $\phi^n\downarrow 0$ as $n\rightarrow\infty$, $dt\times dx\times d\mathbb{P}$ a.e., $\lim_{n\rightarrow\infty}\sum_{i=1}^m E\int_0^T\|\check{v}^{n,i}(t)\|^2\,dt=0$, $\lim_{n\rightarrow\infty}\|\check{u}^n-\check{u}\|_{\mathcal{L}^2(\mathcal{K})}=0$, $\lim_{n\rightarrow\infty}(\|f^n_2+\nabla\cdot g^n_2\|_{\mathcal{L}^2([0,T];H^{-1}(\mathcal{O}))}+\|G_2^n\|_{\mathcal{L}^2(\mathcal{O})})=0$, $|\check{u}^n-\check{u}|\leq \phi^n$ $dt\times dx\times d\mathbb{P}$ a.e., with $\check{u}^n$ satisfying the SPDE
    \begin{equation*}
    \begin{split}
    \left\{\begin{array}{ll}
      d\check{u}^n(t,x) = & ~ [\Delta \check{u}^n(t,x)+f^n(t,x)]\,dt+\check{v}^n(t,x)\,dW_t, ~~(t,x)\in Q\\
      \check{u}^n(0,x)  =& ~ 0,~~x\in\mathcal{O},\\
      \check{u}^n|_{\partial\mathcal{O}}=&~0.
      \end{array}
      \right.
    \end{split}
    \end{equation*}
Define $u^n:=\hat{u}-\check{u}^n$. Then
    \begin{align*}
      du^n(t,x)=-(-\Delta u^n(t,x)+\bar{f}(t,x)+f^n(t,x)+\nabla\cdot\hat{g}(t,x))\,dt+(v^r(t,x)-\check{v}^{n,r}(t,x))\,dW_t^r.
    \end{align*}

 Moreover, $|(u^n)^+-u^+|\leq \phi^n$ $dt\times dx\times d\mathbb{P}$ a.e.. The zero Dirichlet conditions of $\check{u}^n$ and $\hat{u}^+$ imply $(u^n)^+|_{\partial\mathcal{O}}=0$. By \cite[Lemma 3.5]{qiutang}, we have almost surely
\begin{align}\label{equation6.13}
&\int_{\mathcal{O}}\Phi(t,x,(u^n)^+(t,x))\,dx+\frac{1}{2}\int_t^T\langle \Phi''(s,(u^n)^+(s)),|(v(s)-\check{v}^n(s))1_{\{u^n>0\}}|^2\rangle \,ds\nonumber\\
=& \int_{\mathcal{O}}\Phi(T,x,(u^n)^+(T,x))\,dx-\int_t^T\int_{\mathcal{O}}\partial_s\Phi(s,x,(u^n)^+(s,x))\,dxds\nonumber\\
&+\int_t^T\langle \Phi'(s,(u^n)^+(s)),\bar{f}(s)1_{\{u^n>0\}}\rangle\,ds+\int_t^T\langle\Phi'(s,(u^n)^+(s)),f^n(s)1_{\{u^n>0\}}\rangle _{1,-1}\,ds\nonumber\\
&-\int_t^T\langle \Phi''(s,(u^n)^+(s))\partial_j (u^n)^+(s)+\partial_j \Phi'(s,(u^n)^+(s)),-\partial_{j}(u^n)^+(s)+\hat{g}^j(s)1_{\{u^n>0\}}\rangle \,ds\nonumber\\
&-\int_t^T\langle\Phi'(s,(u^n)^+(s)),(v^r(s)-\check{v}^{n,r}(s))1_{\{u^n>0\}}\rangle \,dW^r_s,\quad \forall t\in[0,T].
\end{align}
By \cite[Corollary 3.5]{qiuwei}, there exists $\bar{u}\in\mathcal{L}^2(\mathcal{P})$ such that
    \begin{align}\label{U-controlled-by-potential}
      |\hat{u}|+\phi^1 \leq \bar{u},~~dt\times dx\times d\mathbb{P}~ a.e..
    \end{align}
By (\ref{U-controlled-by-potential}) and the properties (2) and (4) of $\Phi$, there holds $dt\times dx\times d\mathbb{P}$ a.e. that
    \begin{align}\label{boundedness-Phi-prime}
      |\Phi'(t,x,(u^n)^+(t,x))| = & ~|\Phi'(t,x,(u^n)^+(t,x))-\Phi'(t,x,0)|  \nonumber\\
       \leq & C|u^n(t,x)|\nonumber\\
       = &~ C|\hat{u}(t,x)-\check{u}^n(t,x)|\nonumber\\
       \leq &~ C|\hat{u}(t,x)|+C|\check{u}(t,x)|+C|\check{u}(t,x)-\check{u}^n(t,x)|\nonumber\\
        \leq &~ C|\hat{u}(t,x)|+C|\check{u}(t,x)|+C\phi^n(t,x)\nonumber\\
        \leq &~ C(|\check{u}(t,x)|+\bar{u}(t,x)).
    \end{align}
By property (4) of $\Phi$, there holds $dt\times dx\times d\mathbb{P}$ a.e. that
    \begin{align}\label{continuity-Phi-prime}
      |\Phi'(t,x,(u^n)^+(t,x))-\Phi'(t,x,u^+(t,x))| \leq & ~C|(u^n)^+(t,x)-u^+(t,x)| \nonumber\\
      \leq & C|\check{u}^n(t,x)-\check{u}(t,x)|\nonumber\\
      \leq & C\phi^n(t,x).
    \end{align}
(\ref{boundedness-Phi-prime}), (\ref{continuity-Phi-prime}) and \cite[Proposition 3.9(ii)]{qiuwei} yield that
    \begin{align*}
        &\lim_{n\rightarrow\infty} \int_t^T\langle\Phi'(s,(u^n)^+(s)),f^n(s)1_{\{u^n>0\}}\rangle _{1,-1}\,ds  \\
     = & ~ \int_{\mathcal{O}_t}\Phi'(s,x,u^+(s,x))\,\mu(dsdx)~a.s..
    \end{align*}
Moreover,
    \begin{align*}
     & E\sup_{0\leq t \leq T}\left|\int_t^T\langle\Phi'(s,(u^n)^+(s)),(v^r(s)-\check{v}^{n,r}(s))1_{\{u^n>0\}}\rangle \,dW^r_s-\int_t^T\langle\Phi'(s,u^+(s)),v^r(s)1_{\{u>0\}}\rangle \,dW^r_s\right|  \\
      \leq & ~CE\left(\int_0^T \left|\langle\Phi'(s,(u^n)^+(s)),(v^r(s)-\check{v}^{n,r}(s))1_{\{u^n>0\}}\rangle -\langle\Phi'(s,u^+(s)),v^r(s)1_{\{u>0\}}\rangle\right|^2\,ds\right)^{\frac{1}{2}}\\
      \leq &~ CE\left(\int_0^T \left|\langle\Phi'(s,(u^n)^+(s)),v(s)1_{\{u^n>0\}}\rangle -\langle\Phi'(s,u^+(s)),v(s)1_{\{u>0\}}\rangle\right|^2\,ds\right)^{\frac{1}{2}}\\
      &+CE\left(\int_0^T \left|\langle\Phi'(s,(u^n)^+(s))-\Phi'(s,0),\check{v}^{n}(s)1_{\{u^n>0\}}\rangle\right|^2\,ds\right)^{\frac{1}{2}}\\
      \leq & C\left(E\textrm{esssup}_{0\leq t\leq T}\|(u^n)^+(t)-u^+(t)\|^2\right)^{\frac{1}{2}}\left(E\int_0^T\|v(t)\|^2\,dt\right)^{\frac{1}{2}}\\
      &+C\left(E\textrm{esssup}_{0\leq t\leq T}\|(u^n)^+(t)\|^2\right)^{\frac{1}{2}}\left(E\int_0^T\|v(t)(1_{\{u^n>0\}}-1_{\{u>0\}})\|^2\,dt\right)^{\frac{1}{2}}\\
      &+C\left(E\textrm{esssup}_{0\leq t\leq T}\|u^n(t)\|^2\right)^{\frac{1}{2}}\left(E\int_0^T\|\check{v}^n(t)\|^2\,dt\right)^{\frac{1}{2}}\\
      \leq & C\left(E\|u^n-u\|^2_{\mathcal{K}}\right)^{\frac{1}{2}}\left(E\int_0^T\|v(t)\|^2\,dt\right)^{\frac{1}{2}}\\
      &+C\left(E\|u^n\|^2_{\mathcal{K}}\right)^{\frac{1}{2}}\left(E\int_0^T\|v(t)(1_{\{u^n>0\}}-1_{\{u>0\}})\|^2\,dt\right)^{\frac{1}{2}}\\
      &+C\left(E\textrm{esssup}_{0\leq t\leq T}\|u^n(t)\|^2\right)^{\frac{1}{2}}\left(E\int_0^T\|\check{v}^n(t)\|^2\,dt\right)^{\frac{1}{2}}\\
      \rightarrow & ~0.
    \end{align*}
By the properties of $\Phi$ and the fact that $|(u^n)^+-u^+|\leq \phi^n$ $dt\times dx\times d\mathbb{P}$ a.e., the convergence of other terms can be treated analogously. Finally by letting $n\rightarrow\infty$, we obtain almost surely that
    \begin{align*}
        &\int_{\mathcal{O}}\Phi(t,x,u^+(t,x))\,dx+\frac{1}{2}\int_t^T\langle \Phi''(s,u^+(s)),|(v(s)1_{\{u>0\}}|^2\rangle \,ds\nonumber\\
        =& \int_{\mathcal{O}}\Phi(T,x,u^+(T,x))\,dx-\int_t^T\int_{\mathcal{O}}\partial_s\Phi(s,x,u^+(s,x))\,dxds\nonumber\\
        &+\int_t^T\langle \Phi'(s,u^+(s)),\bar{f}(s)1_{\{u>0\}}\rangle\,ds+\int_t^T\int_{\mathcal{O}}\Phi'(s,x,u^+(s,x))\,\mu(dsdx)\nonumber\\
        &-\int_t^T\langle \Phi''(s,u^+(s))\partial_j u^+(s)+\partial_j \Phi'(s,u^+(s)),-\partial_{j}u^+(s)+\hat{g}^j(s)1_{\{u^n>0\}}\rangle \,ds\nonumber\\
        &-\int_t^T\langle\Phi'(s,u^+(s)),v^r(s)1_{\{u>0\}}\rangle \,dW^r_s,\quad \forall t\in[0,T].
    \end{align*}
\end{proof}
\section{Some definitions associated with stochastic regular measures}\label{appendix-measure}

In general the random measure $\mu$ in (\ref{equation1.1}) can be a local time, which is not absolutely continuous w.r.t. Lebesgue measure. Hence, the Skorokhod condition $\int_Q(u-\xi)\,\mu(dt,dx)=0$ might not make sense. To give a precise meaning to the Skorohod condition, the theory of parabolic potential and capacity introduced by \cite{pierre,pierre1980} was generalized by \cite{qiuwei} to a backward stochastic framework. This subsection recalls the notion of quasi continuity  and stochastic regular measure, which are repeatedly used in the main text and in the proof of Theorem \ref{Ito-u+}. Moreover, spaces used in the proof of Theorem \ref{Ito-u+} are also presented.

First some spaces are introduced. Denote by $H_0^1(\mathcal{O})$ the first order Sobolev space vanishing on the boundary $\partial\mathcal{O}$ equipped with the norm $\|\upsilon\|^2_1:=\|\upsilon\|^2+\|\nabla\upsilon\|^2$ and by $H^{-1}(\mathcal{O})$ the dual space of $H_0^1(\mathcal{O})$. The dual pair between $H_0^1(\mathcal{O})$ and $H^{-1}(\mathcal{O})$ is denoted by $\langle\cdot,\cdot\rangle_{1,-1}$. Define $(H^{-1})^+(\mathcal{O})=\{v\in H^{-1}(\mathcal{O}):\langle \varphi,v\rangle_{1,-1}\geq 0,~\textrm{for~each}~\varphi\in H^1_0(\mathcal{O})~\textrm{and~}\varphi\geq 0\}$.

For a Hilbert space $V$, denote by $\mathcal{L}^2([0,T];V)$ the set of all $L^2([0,T];V)$ valued $(\mathcal{F}_t)$ adapted process $u$ with the norm defined as $\|u\|_{\mathcal{L}^2([0,T];V)}:=\left(E\|u\|_{L^2([0,T];V)}^2\right)^{\frac{1}{2}}<\infty$. Denote by $\mathcal{L}^2(\mathcal{O})$ the set of all $L^2(\mathcal{O})$ valued $(\mathcal{F}_t)$ adapted process $u$ with the norm $\|u\|_{\mathcal{L}^2(\mathcal{O})}:=\left(E\|u\|^2\right)^{\frac{1}{2}}<\infty$

Denote  $\mathcal{K}:=L^{\infty}([0,T];L^2(\mathcal{O}))\cap L^2([0,T];H_0^1(\mathcal{O}))$, equipped with the norm
$$\|\upsilon\|_{\mathcal{K}}:=\left(\|\upsilon\|^2_{L^{\infty}([0,T];L^2(\mathcal{O}))}+\|\upsilon\|^2_{L^2([0,T];H_0^1(\mathcal{O}))}\right)^{\frac{1}{2}}.$$
Set $\mathcal{W}=\{\upsilon\in L^2(0,T;H_0^1):\partial_t\upsilon \in L^2(0,T;H^{-1})\}$ endowed with the norm
$$\|\upsilon\|_{\mathcal{W}}=\left(\|\upsilon\|^2_{L^2(0,T;H_0^1)}+\|\partial_t\upsilon\|^2_{L^2(0,T;H^{-1})}\right)^{\frac{1}{2}},$$
where $H^{-1}$ is the dual space of $H_0^1$. Furthermore, we set
$$\mathcal{W}_T=\{\upsilon\in\mathcal{W}:\upsilon(T)=0\},~\mathcal{W}^+=\{\upsilon\in\mathcal{W}:\upsilon\geq 0\},~\mathcal{W}_T^+=\mathcal{W}_T\cap\mathcal{W}^+.$$

\begin{definition} 
We denote by $\mathcal{P}$ the set of parabolic potentials, which is the class of $\upsilon\in\mathcal{K}$ such that
$$\int_0^T-\langle\partial_t\varphi(t),\upsilon(t)\rangle \,dt+\int_0^T\langle\partial_i\varphi(t),\partial_i\upsilon(t)\rangle \,dt\geq 0,~\forall\varphi\in\mathcal{W}^+_T.$$
\end{definition}

Denote by $\mathcal{C}(Q)$ the class of continuously differentiable functions in $Q$ with compact support.
By the Hahn-Banach theorem and because $\mathcal{C}(Q)\cap\mathcal{W}_T$ is dense in $\mathcal{C}(Q)$, parabolic potentials can be represented by associated Radon measures. This leads to the following proposition, due to Pierre \cite{pierre1980}.

\begin{proposition}\label{radon-measure}
Let $\upsilon\in\mathcal{P}$. Then there exists a unique Radon measure on $[0,T)\times\mathcal{O}$, denoted by $\mu ^{\upsilon}$, such that
\begin{align*}
\forall~\varphi\in\mathcal{W}_T\cap\mathcal{C}(Q),~\int_0^T-\langle \partial_t\varphi(t),\upsilon(t)\rangle+\int_0^T\langle\partial_i\varphi(t),\partial_i\upsilon(t)\rangle \,dt=\int_0^T\int_{\mathcal{O}}\varphi(t,x)\mu^{\upsilon}(dt,dx)
\end{align*}
\end{proposition}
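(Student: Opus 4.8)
The plan is to represent the left-hand side of the asserted identity as a positive linear functional on a space of test functions and then to invoke the Riesz representation theorem. First I would set, for $\varphi\in\mathcal{W}_T\cap\mathcal{C}(Q)$,
\[
L(\varphi):=\int_0^T-\langle\partial_t\varphi(t),\upsilon(t)\rangle\,dt+\int_0^T\langle\partial_i\varphi(t),\partial_i\upsilon(t)\rangle\,dt ,
\]
and check that $L$ is a well-defined linear functional: since $\upsilon\in\mathcal{P}\subset\mathcal{K}\subset L^2(0,T;H_0^1(\mathcal{O}))$ while $\partial_t\varphi\in L^2(0,T;H^{-1}(\mathcal{O}))$ and $\partial_i\varphi\in L^2(0,T;L^2(\mathcal{O}))$ for $\varphi\in\mathcal{W}_T$, the duality pairing $\langle\partial_t\varphi(t),\upsilon(t)\rangle$ and the $L^2$-pairing $\langle\partial_i\varphi(t),\partial_i\upsilon(t)\rangle$ are integrable in $t$ by Cauchy--Schwarz, so both integrals converge absolutely. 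By the very definition of the class $\mathcal{P}$ of parabolic potentials, $L(\varphi)\geq 0$ for every $\varphi\in\mathcal{W}_T^+$, hence in particular for every nonnegative $\varphi\in\mathcal{W}_T\cap\mathcal{C}(Q)$.

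Next I would establish that $L$ is bounded with respect to the uniform norm, locally in the support. Given a compact set $K\subset[0,T)\times\mathcal{O}$, fix once and for all a nonnegative $\psi_K\in\mathcal{W}_T\cap\mathcal{C}(Q)$ with $\psi_K\geq 1$ on $K$; such a function exists since a smooth bump supported in a small neighbourhood of $K$ contained in $[0,T)\times\mathcal{O}$ automatically belongs to $\mathcal{W}_T$. For any $\varphi\in\mathcal{W}_T\cap\mathcal{C}(Q)$ with $\operatorname{supp}\varphi\subset K$, both functions $\|\varphi\|_{L^\infty}\psi_K+\varphi$ and $\|\varphi\|_{L^\infty}\psi_K-\varphi$ lie in $\mathcal{W}_T^+$, so applying the positivity of $L$ to each of them gives $|L(\varphi)|\leq\|\varphi\|_{L^\infty}\,L(\psi_K)$. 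Thus $L$ is continuous on $\mathcal{W}_T\cap\mathcal{C}(Q)$ for the topology it inherits from $C_c\big([0,T)\times\mathcal{O}\big)$.

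It then remains to pass from $\mathcal{W}_T\cap\mathcal{C}(Q)$ to the full test space. I would use that $\mathcal{W}_T\cap\mathcal{C}(Q)$ contains all continuously differentiable functions with compact support in the half-open cylinder $[0,T)\times\mathcal{O}$ and that these are dense in $C_c\big([0,T)\times\mathcal{O}\big)$; by the Hahn--Banach theorem (or direct continuous extension, using the local bounds above), $L$ extends to a positive linear functional $\widetilde L$ on $C_c\big([0,T)\times\mathcal{O}\big)$. Positivity is preserved in the limit because any nonnegative $f\in C_c\big([0,T)\times\mathcal{O}\big)$, which already vanishes near $t=T$, is a uniform limit of its nonnegative mollifications, which lie in $\mathcal{W}_T\cap\mathcal{C}(Q)$. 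The Riesz representation theorem for positive linear functionals on $C_c\big([0,T)\times\mathcal{O}\big)$ then yields a unique Radon measure $\mu^\upsilon$ on $[0,T)\times\mathcal{O}$ with $\widetilde L(\varphi)=\int_0^T\int_{\mathcal{O}}\varphi(t,x)\,\mu^\upsilon(dt,dx)$; restricting to $\varphi\in\mathcal{W}_T\cap\mathcal{C}(Q)$ gives the claimed equality, and uniqueness follows since any Radon measure representing $L$ on the dense subspace of nice test functions must coincide with $\mu^\upsilon$ everywhere.

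The step I expect to be the most delicate is the handling of the terminal slice $t=T$: one must be careful that the test functions are taken with support in the half-open cylinder $[0,T)\times\mathcal{O}$, so that $\mathcal{W}_T\cap\mathcal{C}(Q)$ is genuinely dense in $C_c\big([0,T)\times\mathcal{O}\big)$ and so that the resulting measure $\mu^\upsilon$ lives on $[0,T)\times\mathcal{O}$ without charging $\{T\}\times\mathcal{O}$. Once this is set up correctly, the positivity coming from the definition of $\mathcal{P}$ together with the local uniform bound derived above supply exactly the hypotheses needed for the Riesz representation theorem; the remaining verifications (absolute convergence of the defining integrals, existence of the bump functions $\psi_K$, and the mollification in the density argument) are routine. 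This is essentially Pierre's argument \cite{pierre1980}.
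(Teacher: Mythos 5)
Your proposal is correct and follows essentially the route the paper itself indicates: the paper does not prove this proposition but attributes it to Pierre, sketching only that it follows ``by the Hahn--Banach theorem and because $\mathcal{C}(Q)\cap\mathcal{W}_T$ is dense in $\mathcal{C}(Q)$,'' and your argument (positivity of the functional from the definition of $\mathcal{P}$, the local uniform bound via the bump function $\psi_K$, extension by density, and the Riesz representation theorem, with the care you note about the half-open cylinder $[0,T)\times\mathcal{O}$) is a correct filling-in of exactly that sketch. No discrepancies to report.
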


\begin{definition}
For any open set $A\subset[0,T)\times\mathcal{O}$, the parabolic capacity of $A$ is defined as
$$\textrm{cap}(A)=\inf\{\|\varphi\|^2_{\mathcal{W}}:\varphi\in\mathcal{W}^+,~ \varphi\geq 1 ~a.e. \textrm{~on}~ A\}.$$
For any Borel set $B\subset[0,T)\times\mathcal{O}$, its parabolic capacity is defined as
$$\textrm{cap}(B)=\inf\{\textrm{cap}(A):A\supset B, ~ A~\textrm{is~open}\}.$$
\end{definition}

\begin{definition}
A real valued function $\phi$ on $[0,T)\times\mathcal{O}$ is said to be quasi-continuous, if there exists a sequence of non-increasing open sets $A_n\subset[0,T)\times\mathcal{O}$ such that
\begin{itemize}
	\item[(1)] $\phi$ is continuous on the complement of each $A_n$;
	\item[(2)] $\lim\limits_{n\rightarrow \infty}\textrm{cap}(A_n)=0$.
\end{itemize}
\end{definition}

Denote by $\mathcal{P}_0$ the class of $\upsilon\in\mathcal{P}$ such that $\upsilon$ is quasi-continuous and $\upsilon(0)=0$ in $L^2$. Each element $\upsilon\in\mathcal{P}_0$ is called a regular potential and the associated Radon measure in Definition \ref{radon-measure} is called a regular measure.
Furthermore, let $\mathcal{L}^0(\mathcal{K})$ be the class of the measurable maps from $(\Omega, \mathcal{F}_T)$ to $\mathcal{K}$, such that each element $\upsilon\in\mathcal{L}^0(\mathcal{K})$ is an $L^2$ valued adapted process. $\mathcal{L}^0(\mathcal{P})$ and $\mathcal{L}^0(\mathcal{P}_0)$ are similarly defined as $\mathcal{L}^0(\mathcal{K})$. Moreover, set
$$\mathcal{L}^2(\mathcal{K}):=L^2(\Omega,\mathcal{F}_T;\mathcal{K})\cap \mathcal{L}^0(\mathcal{K})$$
endowed with the norm $$\|\upsilon\|_{\mathcal{L}^2(\mathcal{K})}=\left( E\|\upsilon\|^2_{\mathcal{K}}\right)^{1/2}.$$

The stochastic parabolic potential is defined as
$$\mathcal{L}^2(\mathcal{P}):=\mathcal{L}^2(\mathcal{K})\cap\mathcal{L}^0(\mathcal{P}),$$
endowed with the norm $$\|u\|_{\mathcal{L}^2(\mathcal{P})}=\|u\|_{\mathcal{L}^2(\mathcal{K})}.$$
In addition, we define the stochastic regular parabolic potential as
$$\mathcal{L}^2(\mathcal{P}_0):=\mathcal{L}^2(\mathcal{P})\cap\mathcal{L}^0(\mathcal{P}_0),$$
and the associated random Radon measure is called a stochastic regular measure.
\end{appendix}


\begin{thebibliography}{10}

\bibitem{AronsonSerrin1967}
{\sc D.~G. Aronson and J.~Serrin}, { Local behavior of solutions of
  quasilinear parabolic equations}, Arch. Rational Mech. Anal., 25 (1967),
  pp.~81--122.

\bibitem{Bensousan_83}
{\sc A.~Bensoussan}, { Maximum principle and dynamic programming approaches
  of the optimal control of partially observed diffusions}, Stoch., 9 (1983),
  pp.~169--222.


\bibitem{changpangyong}M. Chang, Y. Pang and J. Yong { Optimal stopping problem for stochastic differential equations with random coefficients}, SIAM J. Control Optim. \textbf{48} (2009), 941-971.

\bibitem{chen}Y. Chen, { Parabolic partial differential equations of second order}, Peking University Press, 2005 (in Chinese).





\bibitem{DenisMatoussiStoica2005}
{\sc L.~Denis, A.~Matoussi, and L.~Stoica}, { $\textrm{L}^p$ estimates for
  the uniform norm of solutions of quasilinear $\textrm{SPDE}$'s}, Probab.
  Theory Relat. Fields, 133 (2005), pp.~437--463.


\bibitem{denis-matoussi-stoica-2009}L. Denis, A. Matoussi, and L. Stoica { Maximum principle and comparison theorem for quasi-linear stochastic PDEs}, Electron. J. Probab. \textbf{14} (2009), 500-530.



\bibitem{denismzhang}L. Denis, A. Matoussi and J. Zhang { The obstacle problem for quasi-linear stochastic PDEs: analytical approach}, Ann. Probab. \textbf{42} (2014), 865-905.

\bibitem{denismatoussizhang}L. Denis, A. Matoussi and J. Zhang { Maximum principle for quasi-linear stochastic PDEs with obstacle}, Electon. J. Probab. \textbf{19} (2014), 1-32.




\bibitem{paulwinqiuhorst} P. Graewe, U. Horst, and J. Qiu { A Non-Markovian liquidation problem and backward SPDEs with singular terminal conditions}, SIAM J. Control Optim. \textbf{53} (2014), 690-711.





\bibitem{oksendalsulemzhang}B. {\O}ksendal, A. Sulem and T. Zhang { Singular Control and Optimal Stopping of SPDEs, and Backward SPDEs with Reflection}, Math. Oper. Res. \textbf{39} (2014), 464-486.


\bibitem{peng}S. Peng { Stochastic Hamilton-Jacobi-Bellman equations}, SIAM. J. Control Optim. \textbf{30} (1992), 284-304.

\bibitem{pierre}M. Pierre { Prob\'{e}mes d'evolution avec contraintes unilaterales et potentiels parabolique}, Comm. Partial Differential Equations. \textbf{4} (1979), 1149-1197.

%

\bibitem{pierre1980}M. Pierre { Repr\'esentant pr\'ecis d'un potentiel parabolique, in:S\'eminaire de th\'eorie du potentiel}, Lect. Notes Math., \textbf{814} (1980), 186-228.

%


\bibitem{qiu-2015-DSPDE}J. Qiu, { {L}$^{2} $-Theory of Linear Degenerate {SPDEs} and {L}$^p $ Estimates for the Uniform Norm of Weak Solutions}, arXiv:1503.06162, 2015.

\bibitem{qiutang}J. Qiu, S. Tang, { Maximum principle for quasi-linear backward stochastic partial differential equations}, J. Funct. Anal. \textbf{262} (2012), 2436-2480.

\bibitem{qiuwei}J. Qiu, W. Wei, { On the quasi-linear reflected backward stochastic partial differential equations}, J. Funct. Anal. \textbf{267} (2014), 3598-3656.

%


\bibitem{xhou}X. Xhou, { A duality analysis on stochastic partial differential equations}, J. Funct. Anal. \textbf{103} (1992), 275-293.

\end{thebibliography}
\end{document}